\documentclass{article}
\usepackage[utf8]{inputenc}
\usepackage{commath}

\usepackage{authblk}

\usepackage{mathrsfs, amssymb}
\usepackage{amsthm}
\usepackage{makecell}
\usepackage{booktabs}
\usepackage[intlimits]{empheq}
\usepackage{bbm} 
\usepackage{amsmath}
\usepackage{xcolor}
\usepackage[scientific-notation=true]{siunitx}
\usepackage{float}
\usepackage{dsfont}	
\usepackage[numbers]{natbib}
\usepackage[nottoc,numbib]{tocbibind}
\usepackage{bbm}
\usepackage{subcaption}
\usepackage[inner=2cm,outer=2cm,top=2cm,bottom=2cm]{geometry}
\usepackage[ruled,vlined]{algorithm2e}
\usepackage{enumitem}
\usepackage[colorlinks=true,linkcolor=black]{hyperref}
\usepackage{pdfpages}
\usepackage[dvipsnames]{xcolor}

\usepackage[export]{adjustbox}
\usepackage{caption}
\DeclareMathOperator*{\argmin}{arg\,min}

\DeclareMathOperator*{\E}{\mathbb{E}}

\DeclareMathOperator*{\KL}{\operatorname{KL}}

\DeclareMathOperator*{\Ent}{\operatorname{Ent}}
\newcommand{\dd}{\mathop{}\!\mathrm{d}}

\newcommand{\Law}{\operatorname{Law}}

\DeclareMathOperator{\Tr}{Tr}
\DeclareMathOperator{\Reg}{Reg}

\DeclareMathOperator*{\argmax}{arg\,max}
    
\theoremstyle{plain}
\newtheorem{theorem}{Theorem}[section]
\newtheorem{lemma}[theorem]{Lemma}
\newtheorem{corollary}[theorem]{Corollary}
\newtheorem{proposition}[theorem]{Proposition}
\newtheorem{assumption}{Assumption}

\newtheorem*{theorem*}{Theorem}
\newtheorem*{proposition*}{Proposition}

\theoremstyle{definition}
\newtheorem{definition}[theorem]{Definition}

\newtheorem{remark}[theorem]{Remark}

    \usepackage[T1]{fontenc}
    \usepackage{lipsum}
    \usepackage{tocloft}
\makeatletter
\@tfor\next:=abcdefghijklmnopqrstuvwxyzABCDEFGHIJKLMNOPQRSTUVWXYZ\do{%
\def\command@factory#1{%
\expandafter\def\csname b#1\endcsname{\mathbf{#1}}
\expandafter\def\csname cl#1\endcsname{\mathcal{#1}}
\expandafter\def\csname bb#1\endcsname{\mathbb{#1}}
}
\expandafter\command@factory\next
}

\title{The Fenchel Game of Underdamped Langevin Dynamics: \\ Insights into Accelerated Convergence} 

\author[1]{A. Borkowski\thanks{\texttt{alexandra.borkowski@kcl.ac.uk}}}
\author[1]{N. N\"usken\thanks{\texttt{nikolas.nusken@kcl.ac.uk}}}

\affil[1]{Department of Mathematics, King's College London, United Kingdom}

\date{\today}

\begin{document}

\maketitle

\abstract{
For $(Q_t,P_t)$ governed by suitably damped underdamped Langevin dynamics, we quantify the convergence in KL divergence of the positional marginal to a $\sigma$-strongly log-concave target $\pi(\dd q) = \frac{1}{Z}e^{-V(q)}\dd q$ as 
\begin{align*}
\operatorname{KL}(\operatorname{Law}(Q_t) \| \pi) \leq e^{-\sqrt{\sigma} t}\operatorname{KL}(\operatorname{Law}(Q_{0}, P_{0})\, \|\, \Pi_0),
\end{align*}
where $\Pi_0$ denotes an appropriately selected reference measure. When $\pi$ is merely log-concave, the estimate 
\begin{align*}
    \operatorname{KL}(\operatorname{Law}(Q_t) \| \pi) \leq \frac{\tau^2}{t^2}\operatorname{KL}(\operatorname{Law}(Q_{\tau}, P_{\tau})\, \|\, \Pi_{\tau})
\end{align*}
is derived, where $\Pi_\tau$ denotes another correspondingly chosen reference measure at time $\tau > 0$. Both rates match precisely the canonical rates of the corresponding accelerated gradient flows in $\mathbb{R}^d$. They are achieved by the novel interpretation of the underdamped Langevin dynamics as a combination of strategies in an online sampling game and by estimating the KL divergence using a cost function informed by fictitious competitors.}

\setcounter{tocdepth}{1}

\section{Introduction}
Underdamped Langevin dynamics, defined by
\begin{subequations}
\label{eq:UL}
\begin{align}
    \dd Q_t & = P_t \dd t, \\
    \dd P_t & = - \nabla V(Q_t) \dd t - \gamma_t P_t \dd t + \sqrt{2\gamma_t} \dd W_t,
\end{align}
\end{subequations}
models the motion of a particle with position $Q_t$ and velocity $P_t$, subject to a conservative force $-\nabla V$, frictional damping with strength $\gamma_t > 0$, and stochastic perturbations. If the potential $V:\mathbb{R}^d \rightarrow \mathbb{R}$ is smooth and confining and $\gamma_t$ is constant in time, then, for any initial distribution of $(Q_0, P_0)$, the distribution of $(Q_t, P_t)$ converges to the unique invariant measure 
\begin{equation}
\label{eq:qp target}
\Pi = \pi(\dd q)\mathcal{N}(0,I_d)(\dd p),
\end{equation}
where the $Q$-marginal is given by the Gibbs measure 
\begin{align}\label{eq:q_marginal}
\pi = \frac{1}{Z} e^{-V(q)} \dd q,
\end{align}
see, e.g., \cite[Chapter 6]{pavliotis2014stochastic} for a detailed overview. As a consequence, underdamped Langevin dynamics plays a central role for sampling from probability distributions and finds applications in numerous fields of statistical and computational science, such as molecular simulation \cite{lelievre2016partial, BussiParrinello2007, LeimkuhlerMatthews2013}, Bayesian inference \cite{pmlr-v32-cheni14, pmlr-v75-cheng18a, ChatterjiEtAl2018VarianceReduction}, and machine learning \cite{DockhornVahdatKreis2022, ZhengEtAl2024UnderdampedThompson, NIPS2016_a96d3afe}.
\\

In this work, we consider convex and strongly convex potentials $V$; recall that $V$ is said to be $\sigma$-strongly convex if $x \mapsto V(x) - \tfrac{\sigma}{2}\| x \|^2$ is convex.
Our results quantify the convergence of $\Law(Q_t)$ towards $\pi$: 

\begin{theorem}
[Accelerated convergence in the strongly convex case]
\label{theorem:UL_strongly-convex}
Assume that $V \in C^1(\mathbb{R}^d)$ is $\sigma$-strongly convex, for some $\sigma > 0$, and let the regularity assumptions from Assumption \ref{ass:FP-regularity} below be satisfied. Set $\gamma_t = 2 \sqrt{\sigma}$ for all $t \ge 0$, and let $r_0$ denote the Wasserstein-2 optimal transport map from $\operatorname{Law}(Q_0)$ to $\pi$. Then, for every $t \ge 0$, the solution to \eqref{eq:UL} satisfies
\begin{equation}
\label{eq:sc convergence}
    \operatorname{KL}(\operatorname{Law}(Q_t) \| \pi) \leq e^{-\sqrt{\sigma} t}\operatorname{KL}(\operatorname{Law}(Q_{0}, P_{0})\, \|\, \Pi_0),
\end{equation}
where the initial reference measure $\Pi_0$ is defined by
\begin{align*}
    \Pi_0\left(\dd q,\dd p\right) = \pi\left(\dd q\right) \mathcal{N}\big(\sqrt{\sigma}(r_{0}(q)-q), I_d\big)(\dd p).
\end{align*}
\end{theorem}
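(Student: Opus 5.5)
We prove the estimate with a Lyapunov argument: we control a time-dependent relative entropy $\mathcal{L}_t = e^{\sqrt{\sigma} t}\,\KL(\Law(Q_t,P_t)\,\|\,\Pi_t)$ for a moving reference measure
\begin{align*}
\Pi_t(\dd q,\dd p) = \pi(\dd q)\,\mathcal{N}\big(\sqrt{\sigma}(r_t(q)-q), I_d\big)(\dd p),
\end{align*}
where $r_t$ is a suitably evolved transport-type map with $r_0$ the Wasserstein-2 optimal map from $\Law(Q_0)$ to $\pi$. This mimics the continuous-time Nesterov analysis, whose Lyapunov function $e^{\sqrt\sigma t}\big(f(x_t)-f^* + \tfrac{1}{2}\|\sqrt{\sigma}(x^*-x_t) - \dot x_t\|^2\big)$ is exactly the kind of quantity a relative entropy with Gaussian $p$-marginal centred at $\sqrt\sigma(r_t(q)-q)$ produces. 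The positional bound then follows from the chain rule for KL, which is a data-processing inequality: $\KL(\Law(Q_t)\|\pi)\le \KL(\Law(Q_t,P_t)\|\Pi_t)$, since the $q$-marginal of $\Pi_t$ is $\pi$. At $t=0$ the right-hand side is exactly the quantity in \eqref{eq:sc convergence}, so it suffices to show that $\mathcal{L}_t$ is nonincreasing.

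To differentiate $\mathcal{L}_t$, we write $\rho_t=\Law(Q_t,P_t)$. It solves the kinetic Fokker--Planck equation, and Assumption \ref{ass:FP-regularity} justifies the integrations by parts. Decompose
\begin{align*}
\KL(\rho_t\|\Pi_t)=\KL(\rho_t\|\Pi)+\int \Big(\tfrac{\sigma}{2}\|r_t(q)-q\|^2-\sqrt\sigma\langle p, r_t(q)-q\rangle\Big)\rho_t(\dd q,\dd p),
\end{align*}
with $\Pi$ the invariant measure \eqref{eq:qp target}. The first term has the standard dissipation $-\gamma\int\|\nabla_p\log(\rho_t/\Pi)\|^2\dd\rho_t$, because the Hamiltonian part conserves $\KL(\cdot\|\Pi)$. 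The cross term is linear in $\rho_t$. Its time derivative comes from the generator, which contributes $\langle p,\nabla_q\cdot\rangle$ and $-\nabla V\cdot\nabla_p-\gamma p\cdot\nabla_p+\gamma\Delta_p$, together with $\partial_t r_t$. We then combine these terms and complete squares, using $\gamma=2\sqrt\sigma$, to absorb the mixed terms $\langle p,\cdot\rangle$ into the Fisher-information dissipation.

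The fictitious competitor enters here, following the game interpretation. The convexity inequality we need is the entropic analogue of $f(x^*)\ge f(x)+\langle\nabla f(x),x^*-x\rangle+\tfrac\sigma2\|x^*-x\|^2$. Concretely, by $\sigma$-strong geodesic convexity of $\KL(\cdot\|\pi)$ in Wasserstein space (displacement convexity of $V$ plus convexity of entropy), with $T$ the optimal map from $\mu_t=\Law(Q_t)$ to $\pi$,
\begin{align*}
0=\KL(\pi\|\pi)\ge \KL(\mu_t\|\pi)+\int\langle \nabla\log\tfrac{\mu_t}{\pi}, T-\mathrm{id}\rangle\dd\mu_t+\tfrac\sigma2 W_2^2(\mu_t,\pi).
\end{align*}
This suggests choosing $r_t$ to be the optimal map from $\Law(Q_t)$ to $\pi$ at every time, not merely at $t=0$. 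The time derivative of $r_t$ then needs to be handled. One option is the envelope/optimality property of $W_2$: differentiating $\tfrac12W_2^2(\mu_t,\pi)$ gives $\int\langle \mathrm{id}-r_t, v_t\rangle \dd\mu_t$, where $v_t(q)=\E[P_t\mid Q_t=q]$, so the $\partial_t r_t$ contributions can be handled without regularity of $r_t$ in $t$. Another option, if convenient, is to freeze the competitor and use $\KL$-convexity only as an inequality. Since the $q$-marginal of $\rho_t$ is $\mu_t$, the term $\int\langle p, r_t(q)-q\rangle\dd\rho_t$ equals $\int\langle v_t,r_t-\mathrm{id}\rangle\dd\mu_t$. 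The position score $\nabla_q\log(\mu_t/\pi)$ appears after integrating by parts the $-\nabla V\cdot\nabla_p$ term against the linear test function.

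The main obstacle is the final algebra: showing that $\frac{\dd}{\dd t}\mathcal{L}_t\le0$ reduces to the convexity inequality plus a nonpositive quadratic form. This quadratic form is in the $p$-score $\nabla_p\log(\rho_t/\Pi_t)$ and in $p-\sqrt\sigma(r_t-q)$. Making it negative semidefinite is where $\gamma=2\sqrt\sigma$ (critical damping) must be used. My first attempt would be to express every term through the relative score with respect to $\Pi_t$, namely $\nabla_p\log(\rho_t/\Pi_t)=\nabla_p\log\rho_t+p-\sqrt\sigma(r_t-q)$, and through the KL of the marginal. I would then check that the $e^{\sqrt\sigma t}$ growth is exactly compensated by $-\sqrt\sigma\KL(\mu_t\|\pi)-\tfrac{\sigma^{3/2}}{2}W_2^2$ from the convexity inequality, and by $-\tfrac{\sqrt\sigma}{2}\int\|p-\sqrt\sigma(r_t-q)\|^2$ from the friction. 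If exact cancellation fails, the fallback is to use the entropy chain rule, replacing the full $\KL(\rho_t\|\Pi_t)$ with the marginal part plus a Gaussian-mean term. This would allow the conditional $p$-entropy to be treated separately via log-Sobolev for the Gaussian.
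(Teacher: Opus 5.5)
\textbf{Verdict: the Lyapunov functional is right, but the plan for proving its monotonicity has two genuine gaps.}

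Your functional is the right one. Take $r_t$ to be the Brenier map from $\mu_t=\Law(Q_t)$ to $\pi$ at every time, as in your second choice. Then $e^{\sqrt\sigma t}\KL(\Law(Q_t,P_t)\|\Pi_t)$ is exactly the paper's $A_t\KL(\Law(\bar X_t)\|\pi)+\Psi_t$ after the change of variables $P_t=\sqrt\sigma(X_t-\bar X_t)$. Your final data-processing step is also fine.

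\textbf{First gap: strong geodesic convexity of $\KL(\cdot\|\pi)$ cannot be the key inequality.} There is no diffusion in $q$, so no position score ever appears in $\frac{\dd}{\dd t}\mathcal L_t$. Integrating $\nabla_p\cdot(\nabla V\rho_t)$ against $\langle p,r_t(q)-q\rangle$ only produces $-\E\langle\nabla V(Q_t),r_t(Q_t)-Q_t\rangle$. Hence only the potential half of your inequality is usable, namely strong convexity of $V$ along the coupling $(Q_t,r_t(Q_t))$.

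The entropic half, $\Ent(\mu_t)-\Ent(\pi)\le\int(\Tr\nabla r_t-d)\,\dd\mu_t$, is too weak. Write $\nu_q=\Law(P_t\mid Q_t=q)$ and $C^P_t(q)$ for its covariance. The only terms able to absorb $\sqrt\sigma(\Ent(\mu_t)-\Ent(\pi))$ are the conditional Fisher information and $-\sqrt\sigma\,\E[C^P_t(Q_t):\nabla r_t(Q_t)]$. After the Gaussian log-Sobolev step, what is needed is
\[\Ent(\mu_t)-\Ent(\pi)+2d\le\E\big[J(\nu_{Q_t})+C^P_t(Q_t):\nabla r_t(Q_t)\big].\]
The paper proves this in Lemma~\ref{lemma:ent-cramer-rao} from three ingredients:
\begin{enumerate}
\item the exact identity $\Ent(\mu_t)-\Ent(\pi)=\int\ln\det\nabla r_t\,\dd\mu_t$;
\item Cram\'er--Rao, in the form $J+C:A\ge2\Tr A^{1/2}$;
\item the elementary bound $\ln\lambda\le2(\sqrt\lambda-1)$.
\end{enumerate}
If $\ln\det\nabla r_t$ is replaced by $\Tr\nabla r_t-d$, the Cram\'er--Rao step leaves $\sum_i(\sqrt{\lambda_i}-1)^2\ge0$, where $\lambda_i$ are the eigenvalues of $\nabla r_t$. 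That is the wrong sign.

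Concretely, take $d=\sigma=1$, $V=\tfrac12q^2$, $Q_t\sim\mathcal N(0,\lambda^{-2})$, and $P_t\sim\mathcal N(0,\tfrac12)$ independent of $Q_t$. A direct computation gives $\frac{\dd}{\dd t}\mathcal L_t=e^t(\ln\lambda-\tfrac\lambda2-\tfrac34+\tfrac12\ln2)<0$. Bounding $\KL(\mu_t\|\pi)$ via geodesic convexity instead leaves exactly $e^t(\tfrac\lambda2-\tfrac74+\tfrac12\ln2)$, which is positive for $\lambda\ge3$. Since this is exact, no later estimate can close the argument. The residual is genuinely entropic, not ``convexity plus a nonpositive quadratic form''.

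\textbf{Second gap: the treatment of $\partial_t r_t$, which is exactly what produces the covariance term above.} The envelope formula does control $\frac{\dd}{\dd t}\frac12\mathcal W_2^2(\mu_t,\pi)$. However, $\mathcal L_t$ already contains
\[\int\langle v_t,r_t-\mathrm{id}\rangle\,\dd\mu_t=-\tfrac{\dd}{\dd t}\tfrac12\mathcal W_2^2(\mu_t,\pi).\]
Differentiating this term pairs $\partial_t r_t$ with $P_t$ (equivalently $v_t$) rather than with $Q_t$. That expression is not an optimal-transport value, so no envelope argument applies to it.

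Freezing $r$ globally in time does not work either. Once $r_\#\mu_t\ne\pi$, both the convexity step (since $\E V(r(Q_t))\ne\E_\pi V$) and the Monge--Amp\`ere entropy identity break.

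What the paper does instead:
\begin{enumerate}
\item It freezes $r_t$ only locally, transporting it along the flow of $v_t$.
\item It proves a one-sided comparison with the true $r_{t+h}$. This uses optimality of both couplings and the acceleration bound in Assumption~\ref{ass:FP-regularity}(ii).
\item This effectively replaces $\partial_t r_t$ by $-\nabla r_t\,v_t$. Combined with the transport term $\langle p,\nabla_q\cdot\rangle$, it yields precisely $-\sqrt\sigma\,\E[C^P_t(Q_t):\nabla r_t(Q_t)]$.
\end{enumerate}
Without this term and the entropy--transport Cram\'er--Rao inequality, your argument has dissipation only in $p$ but must produce decay of $\KL(\mu_t\|\pi)$. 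Your plan does not bridge that gap.
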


In the case when $V$ is convex (but not necessarily strongly convex), we have the following result: 

\begin{theorem}
[Accelerated convergence in the convex case]
\label{theorem:UL_convex} 
Assume that $V \in C^1(\mathbb{R}^d)$ is convex and fix an initial time $\tau>0$. Let the regularity assumptions from Assumption \ref{ass:FP-regularity} below be satisfied. Set $\gamma_t = \frac{3}{t}$, and let $r_\tau$ denote the Wasserstein-2 optimal transport map from $\operatorname{Law}(Q_\tau)$ to $\pi$. Then, for every $t\ge\tau$, the solution to \eqref{eq:UL}
satisfies
\begin{equation}
\label{eq:convex rate}
    \operatorname{KL}(\operatorname{Law}(Q_t) \| \pi) \leq \frac{\tau^2}{t^2}\operatorname{KL}(\operatorname{Law}(Q_{\tau}, P_{\tau})\, \|\, \Pi_{\tau}),
\end{equation}
where the initial reference measure $\Pi_{\tau}$ is defined by
\begin{align*}
    \Pi_{\tau}(\dd q,\dd p) = \pi\left(\dd q\right)\, \mathcal{N}\big(2 \tau^{-1}(r_{\tau}(q)-q), I_d\big)(\dd p).
\end{align*}
\end{theorem}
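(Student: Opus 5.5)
We mimic the Lyapunov proof of Nesterov's accelerated flow $\ddot x + \frac{3}{t}\dot x + \nabla V(x) = 0$. There the energy $\mathcal{E}_t = t^2(V(x_t)-V^*) + 2\|x_t - x^* + \tfrac{t}{2}\dot x_t\|^2$ is nonincreasing. The analogue at the level of measures replaces $V(x)-V^*$ by $\KL(\Law(Q_t)\|\pi)$, and the quadratic term by the KL-discrepancy between the velocity distribution and a Gaussian centred at a ``drift toward $\pi$''. Concretely, let $\rho_t=\Law(Q_t,P_t)$ with $q$-marginal $\mu_t$, and let $r_t$ be the $W_2$-optimal map from $\mu_t$ to $\pi$. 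Define the time-dependent reference measure
\begin{align*}
\Pi_t(\dd q,\dd p)=\pi(\dd q)\,\mathcal N\big(2t^{-1}(r_t(q)-q),I_d\big)(\dd p).
\end{align*}
We aim to show that $t\mapsto t^2\,\KL(\rho_t\|\Pi_t)$ is nonincreasing on $[\tau,\infty)$. We then conclude by the chain rule for KL: $\KL(\mu_t\|\pi)\le \KL(\rho_t\|\Pi_t)$, because the $q$-marginal of $\Pi_t$ is $\pi$ and the conditional KL term is nonnegative. This gives $\KL(\mu_t\|\pi)\le \tau^2 t^{-2}\KL(\rho_\tau\|\Pi_\tau)$.

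Rather than differentiating $r_t$, which is regular only in a weak sense, I would use the game/fictitious-competitor viewpoint advertised in the abstract to obtain an upper bound that is easier to handle. For any fixed comparison map $r$ and any time $s$, the functional $\KL(\rho_s\|\pi\otimes\mathcal N(2s^{-1}(r-q),I))$ can be written via a Fenchel/Donsker--Varadhan representation as a supremum over test functions. Freezing the optimiser at time $s=t$, i.e.\ freezing $r=r_t$, gives an upper bound at nearby times $s$ that coincides with the true value at $s=t$. It therefore suffices to control the one-sided derivative of $s\mapsto s^2\KL(\rho_s\|\Pi^{r_t}_s)$ at $s=t$, where now only $\rho_s$ and the explicit prefactor $2/s$ vary.

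This derivative is computed from the kinetic Fokker--Planck equation for $\rho_s$, using the regularity from Assumption \ref{ass:FP-regularity} to justify the integrations by parts. The computation produces three kinds of terms:
\begin{enumerate}[label=(\roman*)]
\item a nonpositive Fisher-information dissipation in $p$, coming from the noise $\sqrt{2\gamma}$, which is matched against the friction relative to the shifted Gaussian;
\item transport terms involving $\nabla V$, $\nabla\log\mu_t$ and the displacement $r_t(q)-q$;
\item terms from $\partial_s$ of the mean $2s^{-1}(r-q)$ and from the factor $s^2$.
\end{enumerate}
The terms in (ii) are controlled by the displacement-convexity ``above-tangent'' inequality for KL along $W_2$ geodesics:
\begin{align*}
\KL(\pi\|\pi)\ge \KL(\mu_t\|\pi)+\int\langle \nabla\log(\mu_t/\pi),\,r_t-\mathrm{id}\rangle\,\dd\mu_t,
\end{align*}
which holds because $V$ is convex. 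This inequality is exactly where $t^2$ multiplied by the $-\KL$ term is absorbed, mirroring the step $V(x^*)\ge V(x)+\langle\nabla V,x^*-x\rangle$ in the Euclidean proof.

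The main obstacle is the algebra that makes all cross terms cancel or have a favourable sign. One needs the precise choice $\gamma_t=3/t$ and the mean $2t^{-1}(r_t-q)$ so that the $s$-derivative of the mean, $-2s^{-2}(r-q)-2s^{-1}p$, combines with the friction $-\gamma p$ to leave only a completed square of the form $-\|\,\cdot\,\|^2$. One must also track the $p$-conditional structure of $\rho$, since the argument involves the full joint law rather than a point, so that the Fisher-information term dominates the residual terms by Cauchy--Schwarz. I would first carry out the calculation formally, assuming $\rho_t$ has a smooth positive density and $r_t$ is a smooth diffeomorphism, and then appeal to the regularity assumption to make it rigorous.
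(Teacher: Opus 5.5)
Your target functional is the right one. With $A_t=t^2/4$ and $X_t=Q_t+\tfrac t2P_t$ one has $t^2\KL(\Law(Q_t,P_t)\|\Pi_t)=4\bigl(A_t\KL(\Law(Q_t)\|\pi)+\Psi_t\bigr)$, which is exactly the quantity the paper controls via $A_t\KL(\Law(Q_t)\|\pi)+\Psi_t\le A_\tau\KL(\Law(Q_\tau,P_\tau)\|\Pi_\tau)$. Your chain-rule conclusion is also correct. The monotonicity argument does not close, however, and the step that fails is the above-tangent inequality for the \emph{full} KL.

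Differentiating $A_t\KL(\Law(Q_t,P_t)\|\Pi_t)$ gives three things:
\begin{itemize}
\item The friction and the derivative of the mean cancel exactly in the variable $X_t$, so no negative square is left.
\item The potential terms combine into $\tfrac t2\,\mathbb{E}[V(Q_t)+\langle\nabla V(Q_t),r_t(Q_t)-Q_t\rangle-V(r_t(Q_t))]\le0$.
\item After the Gaussian log-Sobolev inequality for $\Law(P_t|Q_t)$, the entropic terms are at most $\tfrac t2\bigl(\Ent(\mu_t)-\Ent(\pi)+\mathbb{E}[2d-J(\Law(P_t|Q_t))-C^P_t(Q_t):\nabla r_t(Q_t)]\bigr)$. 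Here $\mu_t=\Law(Q_t)$ and $C^P_t$ is the conditional covariance of $P_t$ given $Q_t$.
\end{itemize}
The paper closes this with Lemma~\ref{lemma:ent-cramer-rao}. It keeps $\Ent(\mu_t)-\Ent(\pi)=\int\ln\det\nabla r_t\,\dd\mu_t$ exact (Monge--Amp\`ere). It then uses Cram\'er--Rao and AM--GM to get $J+C:\nabla r_t\ge2\Tr\bigl((\nabla r_t)^{1/2}\bigr)$, and concludes with $\ln\lambda\le2\sqrt\lambda-2$. Convexity is linearised only for $V$.

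Your inequality instead replaces $\ln\det\nabla r_t$ by $\Tr\nabla r_t-d$. You would then need $J+C:\nabla r_t\ge\Tr\nabla r_t+d$, which is false. The Cram\'er--Rao bound is attained by Gaussian conditionals with covariance $(\nabla r_t)^{-1/2}$, and $2\sqrt\lambda<\lambda+1$ for $\lambda\neq1$.

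Concretely, take $d=1$, $\pi=\mathcal N(0,1)$, $\mu_t=\mathcal N(0,10^{-2})$ (so $r_t(q)=10q$), and $P_t$ independent of $Q_t$ with law $\mathcal N(0,10^{-1/2})$. No time derivative of $r_t$ enters. The true derivative of $t^2\KL(\Law(Q_t,P_t)\|\Pi_t)$ is about $-6t$. After your above-tangent step, keeping every other term exact, the best bound available is about $+8t$. No Cauchy--Schwarz manipulation recovers this loss. The missing idea is the entropy--transport Cram\'er--Rao inequality, i.e.\ treating the potential and the entropy of $\mathcal F$ differently.

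Your freezing step is also not valid as stated. Fixing the optimiser in a supremum (Donsker--Varadhan) representation gives a \emph{lower} bound, not an upper one. Moreover, $r\mapsto\KL(\Law(Q_s,P_s)\|\Pi^r_s)$ is not minimised at $r_s$, since its $r$-dependence $\tfrac12\mathbb{E}\|P_s-\tfrac2s(r(Q_s)-Q_s)\|^2$ involves the velocity. For $s\neq t$ the frozen map $r_t$ does not even push $\Law(Q_s)$ to $\pi$. So there is no envelope argument, and the frozen functional can fall strictly below the true one right after $t$. For example, take $d=1$, $\Law(Q_t)=\mathcal N(0,\sigma^2)$ with $\sigma>1$, $\pi=\mathcal N(0,1)$, and $\mathbb{E}[P_t|Q_t=q]=kq$ with $k>0$. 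The derivative at $s=t$ of $\tfrac{s^2}{4}\bigl(\KL(\Law(Q_s,P_s)\|\Pi^{r_t}_s)-\KL(\Law(Q_s,P_s)\|\Pi_s)\bigr)$ is $-k(\sigma-1)-\tfrac t2k^2\sigma<0$.

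The paper instead transports $r_t$ along the characteristic flow $\Phi_{t,s}$ of $v_s(q)=\mathbb{E}[P_s|Q_s=q]$, which preserves the pushforward constraint. It then proves only the one-sided, time-integrated comparison $F(t+h)\le\widetilde F_t(t+h)+o(h)$ of Lemma~\ref{lemma:frozen-map-comparison}, where $F(s)=\tfrac12\mathbb{E}\|X_s-r_s(Q_s)\|^2$. This uses optimality of the Brenier couplings and the acceleration bound of Assumption~\ref{ass:FP-regularity}(ii). The transport is also what turns the cross term into the conditional-covariance term $C^P_t:\nabla r_t$ that the Cram\'er--Rao step consumes.
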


\begin{remark}
The specific choice $\gamma_t = \tfrac{3}{t}$ renders \eqref{eq:UL} time-inhomogeneous and will be motivated below in Section \ref{sec:optimisation}.
We note that even though \eqref{eq:UL} is time-inhomogeneous, $\Pi$ is still invariant: $(Q_\tau,P_\tau) \sim \Pi$ implies $(Q_t,P_t) \sim \Pi$ for all $t \ge \tau$. 
\end{remark}

The essence of Theorems \ref{theorem:UL_strongly-convex}
and \ref{theorem:UL_convex}
are so-called \emph{accelerated} rates of convergence. More context is given below in Section \ref{sec:optimisation}; for now, recall that under mild conditions on $V$, overdamped Langevin dynamics,
\begin{align}
\label{eq:OL}
    \dd Q_t = - \nabla V(Q_t)\dd t + \sqrt{2} \dd W_t,
\end{align}
is ergodic with respect to $\pi$ and 
satisfies 
\begin{equation}
\label{eq:overdamped sc}
\KL(\Law(Q_t)\| \pi) \le e^{-2 \sigma t} \KL(\Law(Q_0) \| \pi)
\end{equation}
if $V$ is $\sigma$-strongly convex \cite[Theorems 1.2.26 and 1.2.30]{Chewi26Book}, and
\begin{equation}
\label{eq:overdamped convex}
\KL(\Law(Q_t)\| \pi) \le \frac{ \mathcal{W}_2^2(\Law(Q_0), \pi)}{4t}
\end{equation}
if $V$ is (only) convex \cite[Equation (1.4.14)]{Chewi26Book}. For small $\sigma$, the exponential rate $\sqrt{\sigma}$ in \eqref{eq:sc convergence} improves the overdamped rate $2\sigma$ in \eqref{eq:overdamped sc}. In the convex case, the $t^{-2}$ estimate in \eqref{eq:convex rate} improves the overdamped $t^{-1}$ rate in \eqref{eq:overdamped convex}. Thus, the underdamped Langevin dynamics enjoys accelerated convergence rates. 
\\ 

\textbf{Outline.} In the remainder of this paper, we prove Theorems \ref{theorem:UL_strongly-convex} and \ref{theorem:UL_convex}, but also aim to contribute to the conceptual understanding of this acceleration phenomenon. To this end, we first review connections to convex optimisation in Section \ref{sec:optimisation} and then lay out a game-theoretic framework (together with a roadmap for the proof) in Section \ref{sec:games}. In Section \ref{sec:related}, we discuss related work, while Section \ref{sec:preliminaries} provides preliminaries and assumptions. The main part consists of Sections \ref{sec:main_convex}
and \ref{sec:main_sconvex}, which specify the game-theoretic setting, propose and analyse suitable strategies within this framework, and relate these strategies to underdamped Langevin dynamics. Section \ref{sec:main_convex} assumes that the potential $V$ in the positional marginal \eqref{eq:q_marginal} is convex (leading to Theorem \ref{theorem:UL_convex}), while Section \ref{sec:main_sconvex} focuses on the strongly convex case (leading to Theorem \ref{theorem:UL_strongly-convex}). Section \ref{app:Fenchel Rd} in the Appendix details how accelerated gradient flows on $\mathbb{R}^d$  similarly exhibit a game-theoretic structure, while Section \ref{sec:saddle_point} comments on the saddle-point pattern that underlies this framework. Finally, Sections \ref{app:lemmas} and \ref{sec:proof_L2-distance_convex} state and prove technical and auxiliary lemmas.\\

\textbf{Notation.} Throughout, $\|\cdot\|$ and $\langle \cdot, \cdot \rangle$ denote the Euclidean norm and inner product, respectively. For matrices $A, B \in \mathbb{R}^{d \times d}$, we write $A:B$ for $\Tr(A^TB)$, $A \succ 0$ means that $A$ is symmetric positive definite, and $A \succeq 0$ that $A$ is symmetric positive semidefinite. We denote by $W = (W)_{t \geq 0}$  the standard d-dimensional Brownian motion. For absolutely continuous probability measures $\mu(\dd x) = \rho_{\mu}(x)\dd x$ and $\nu(\dd x) = \rho_{\nu}(x)\dd x$ on $\mathbb{R}^d$ with $\mu \ll \nu$, we define the Kullback-Leibler (KL) divergence of $\mu$ with respect to $\nu$ by
\begin{align*}
\operatorname{KL}\bigl(\mu\,\|\,\nu\bigr)
:= \int_{\mathbb{R}^d}\ln\!\left(
\frac{\rho_{\mu}(x)}{\rho_{\nu}(x)}
\right)\rho_{\mu}(x)\dd x.
\end{align*}
Similarly, we define the (negative) differential entropy of $\mu$ by
\begin{align}
\label{eq:ent}
    \operatorname{Ent}(\mu) := \int_{\mathbb{R}^d} \rho_{\mu}(x)\ln{\rho_{\mu}(x)}\dd x.
\end{align}
Finally, if $\mu$ and $\nu$ have finite second moments, we define their Wasserstein distance by 
\begin{align*}
    \mathcal{W}_2^2(\mu, \nu) := \inf_{\gamma \in \Gamma(\mu, \nu)} \int_{\mathbb{R}^{2d}} \|x-y \|^2\dd\gamma(x,y),
\end{align*}
where $\Gamma(\mu, \nu)$ denotes the set of all couplings between $\mu$ and $\nu$. We often use the same symbol for an absolutely continuous measure and its density with respect to Lebesgue measure, e.g., we write $\mu(\dd x)=\mu(x)\dd x$.

\subsection{Connection to convex optimisation on $\mathbb{R}^d$}
\label{sec:optimisation}
According to Jordan, Kinderlehrer, and Otto \cite{JKO}, the evolution of $\Law(Q_t)$ under the overdamped Langevin dynamics \eqref{eq:OL} follows the Wasserstein gradient flow of $\KL(\cdot \| \pi)$, or equivalently, of the free energy 
\begin{equation}
\label{eq:free energy}
    \mathcal{F}(\rho)  := \int_{\mathbb{R}^d} V\dd\rho + \int_{\mathbb{R}^d} \ln{\rho}\dd \rho = \KL(\rho \| \pi) - \ln Z.
\end{equation}
In other words, under \eqref{eq:OL}, the distribution $\Law(Q_t)$ follows a curve of steepest descent, approaching the minimiser $\pi = \argmin_\rho \mathcal{F}(\rho)$. From this perspective, the Euclidean counterpart to \eqref{eq:OL} is given by the (standard) gradient flow
\begin{align}
\label{eq:gf}
    \dot{q}_t = - \nabla f(q_t),
\end{align}
for a convex objective function $f \in C^1(\mathbb{R}^d)$ with $ \argmin_q f(q) \neq \emptyset$. Drawing the parallel between \eqref{eq:OL} and \eqref{eq:gf} more explicitly, one associates the free energy $\mathcal{F}$ with the convex objective $f$, the distribution $\Law(Q_t) \in \mathcal{P}_2(\mathbb{R}^d)$ with $q_t \in \mathbb{R}^d$, and the Wasserstein-2 geometry with the Euclidean geometry. The analogy also extends to the decay estimates \eqref{eq:overdamped sc} and \eqref{eq:overdamped convex}: For $\sigma$-strongly convex $f$ and denoting $q^* \in \argmin_q f(q)$, we have
\begin{equation}\label{eq:conv_gf_sc}
f(q_t) - f(q^*) \leq  e^{-2\sigma t} \left( f(q_0) - f(q^*)\right), 
\end{equation}
along the solution to \eqref{eq:gf}, see \cite[Proposition 1.1]{scieur2017integration}, 
and for convex (but not necessarily strongly convex) $f$,
\begin{equation}\label{eq:conv_gf_c}
f(q_t) - f(q^*) \le \frac{\| q_0 - q^* \|^2}{4t},
\end{equation}
see \cite[Exercise 2.1]{chewi2026lectures}. Indeed, the convergence results \eqref{eq:overdamped sc} and \eqref{eq:overdamped convex} can be obtained by arguments analogous to those proving \eqref{eq:conv_gf_sc} and \eqref{eq:conv_gf_c}, respectively, compare \cite[Section 1.4.2 and Exercise 2.1]{Chewi26Book} and \cite[Corollary 2.6 and Exercise 1.18]{chewi2026lectures}. \\
\\
\textbf{Accelerated gradient flows.} 
Time discretisations of \eqref{eq:gf} lead to gradient-descent-type algorithms \cite[Chapter 6]{vishnoi2021algorithms}. However, perhaps surprisingly, other first-order algorithms may achieve faster rates of convergence. In particular, the seminal works of Polyak \cite{polyak1964some} and Nesterov \cite{nesterov1983method} have built the foundation towards the corresponding class of so-called \emph{accelerated gradient methods}, see also \cite[Chapter 2 and 4]{daspremont2021acceleration}. Throughout this paper, we focus solely on the corresponding continuous-time models, often referred to as \emph{accelerated gradient flows}, brought forward and extended in work such as \cite{NesterovODE, shi2022understanding, AcceleratedMD_continuous_and_discrete, wilson2021lyapunov}. For a $\sigma$-strongly convex objective $f$, the solution to Polyak's heavy-ball ODE
\begin{subequations}
\label{eq:polyak_ODE}
\begin{align}
    \dot{q}_t & = p_t \\
    \dot{p}_t & = - \nabla f(q_t) - 2\sqrt\sigma p_t,
\end{align}
\end{subequations}
achieves the decay 
\begin{align}\label{eq:conv_Polyak}
    f(q_t) - f(q^*) \leq e^{-\sqrt{\sigma}t}\left(f(q_0) - f(q^*)+ \frac{\sigma}{2}\|q_0+\frac{1}{\sqrt{\sigma}}p_{0} - q^*\|^2\right),
\end{align}
see \cite{polyak1964some, siegel2019accelerated} and Section \ref{sec:polyak}. Similarly, for convex $f$, the solution to Nesterov's ODE 
\begin{subequations}
\label{eq:nesterov_ODE}
\begin{align}
 \dot{q}_t & = p_t \\
 \dot{p}_t & = - \nabla f(q_t) - \frac{3}{t}p_t,  
\end{align}
\end{subequations}
for $t \geq \tau > 0$, leads to
\begin{align}\label{eq:conv_Nesterov}
    f(q_t) - f(q^*) \leq \frac{\tau^2\left(f(q_{\tau}) - f(q^*) \right)+2\|q_{\tau} + \frac{\tau}{2}p_{\tau}-  q^*\|^2}{t^2},
\end{align}
see \cite[Theorem 2.7]{AttouchChbaniPeypouquetRedont2018} and Section \ref{sec:nesterov_ODE}. Importantly, the accelerated gradient flows \eqref{eq:polyak_ODE} and \eqref{eq:nesterov_ODE} are noiseless versions of underdamped Langevin dynamics as considered in Theorems \ref{theorem:UL_strongly-convex} and \ref{theorem:UL_convex}; as a consequence, underdamped Langevin dynamics has been described as an accelerated gradient flow on the space of probability distributions (see, for instance \cite{ma2021nesterov}), in a similar way in which overdamped Langevin dynamics \eqref{eq:OL} is linked to ordinary gradient flows. 

We highlight that, comparing the convergence rates in \eqref{eq:sc convergence} with that in \eqref{eq:conv_Polyak} and in \eqref{eq:convex rate} with that in \eqref{eq:conv_Nesterov}, our theorems reproduce the exact same exponential and quadratic convergence rates, employing the same canonical damping from the continuous-time Euclidean optimisation literature. If $P_t$ is appropriately initialised, the analogy extends even to the precise upper bound, as expressed in the following two corollaries (which should be compared to \eqref{eq:conv_Polyak} and \eqref{eq:conv_Nesterov} for vanishing initial velocity).
\begin{corollary}\label{cor:UL_strongly-convex}
Under the assumptions of Theorem \ref{theorem:UL_strongly-convex}, suppose additionally that $\operatorname{Law}(P_{0}\, | \, Q_0) = \mathcal{N}(0,I_d)$. Then, for $\gamma_t \equiv 2\sqrt{\sigma}$, the solution to \eqref{eq:UL} satisfies, for every $t \geq 0$,
\begin{equation}
    \operatorname{KL}(\operatorname{Law}(Q_t) \| \pi) \leq e^{-\sqrt{\sigma}t}\left(\operatorname{KL}(\operatorname{Law}(Q_{0}) \| \pi) + \frac{\sigma}{2}\mathcal{W}_2^2(\operatorname{Law}(Q_0), \pi) \right).
\end{equation}
\end{corollary}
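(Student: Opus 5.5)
The plan is to read the corollary off Theorem \ref{theorem:UL_strongly-convex} by evaluating its right-hand side explicitly under the extra hypothesis $\operatorname{Law}(P_0\,|\,Q_0)=\mathcal{N}(0,I_d)$. Write $\mu_0=\operatorname{Law}(Q_0)$, so that $\operatorname{Law}(Q_0,P_0)=\mu_0(\dd q)\,\mathcal{N}(0,I_d)(\dd p)$. The reference measure is $\Pi_0(\dd q,\dd p)=\pi(\dd q)\,\mathcal{N}(m(q),I_d)(\dd p)$ with $m(q)=\sqrt{\sigma}(r_0(q)-q)$. It then suffices to show
\begin{align*}
\operatorname{KL}(\operatorname{Law}(Q_0,P_0)\,\|\,\Pi_0) = \operatorname{KL}(\mu_0\|\pi)+\frac{\sigma}{2}\mathcal{W}_2^2(\mu_0,\pi),
\end{align*}
and insert this into \eqref{eq:sc convergence}.

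\textbf{Step 1: chain rule for KL.} Both measures are disintegrated with respect to the $q$-marginal. The chain rule gives
\begin{align*}
\operatorname{KL}(\operatorname{Law}(Q_0,P_0)\|\Pi_0)=\operatorname{KL}(\mu_0\|\pi)+\int_{\mathbb{R}^d}\operatorname{KL}\big(\mathcal{N}(0,I_d)\,\|\,\mathcal{N}(m(q),I_d)\big)\,\mu_0(\dd q).
\end{align*}
Alternatively, one can write the joint density ratio as $\frac{\mu_0(q)}{\pi(q)}\exp\big(-\langle p,m(q)\rangle+\tfrac12\|m(q)\|^2\big)$ and integrate directly against $\mu_0\otimes\mathcal{N}(0,I_d)$.

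\textbf{Step 2: Gaussian KL.} For identity-covariance Gaussians, $\operatorname{KL}(\mathcal{N}(0,I)\|\mathcal{N}(m,I))=\tfrac12\|m\|^2$. The integral term therefore equals
\begin{align*}
\frac{\sigma}{2}\int\|r_0(q)-q\|^2\,\mu_0(\dd q).
\end{align*}

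\textbf{Step 3: identify the Wasserstein distance.} Since $r_0$ is the $\mathcal{W}_2$-optimal map from $\mu_0$ to $\pi$, we have $\int\|r_0(q)-q\|^2\,\mu_0(\dd q)=\mathcal{W}_2^2(\mu_0,\pi)$.

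\textbf{Main obstacle.} The only delicate point is justifying the manipulations. We need the regularity from Assumption \ref{ass:FP-regularity}, namely absolute continuity of $\mu_0$ and finite second moments, to guarantee that $r_0$ exists, that $\operatorname{KL}(\mu_0\|\pi)$ is well defined, and that $m\in L^2(\mu_0)$. If $\operatorname{KL}(\mu_0\|\pi)=\infty$, the bound holds trivially. With these in place, the integrability needed for Fubini and the chain rule is immediate.
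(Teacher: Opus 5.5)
Your proposal is correct and follows the paper's argument: both plug the hypothesis $\operatorname{Law}(P_0\,|\,Q_0)=\mathcal{N}(0,I_d)$ into Theorem \ref{theorem:UL_strongly-convex} and evaluate $\operatorname{KL}(\operatorname{Law}(Q_0,P_0)\,\|\,\Pi_0)$ explicitly. The paper expands the log-density ratio against $\pi\otimes\mathcal{N}(0,I_d)$, so the cross term $\sqrt{\sigma}\,\mathcal{C}_{OT}$ vanishes and the quadratic term gives $\frac{\sigma}{2}\mathcal{W}_2^2(\operatorname{Law}(Q_0),\pi)$; this is exactly your ``alternative'' in Step 1, and your chain-rule version with $\operatorname{KL}(\mathcal{N}(0,I_d)\,\|\,\mathcal{N}(m,I_d))=\frac12\|m\|^2$ is the same computation organised slightly differently.
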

\begin{corollary}\label{cor:UL_convex}
Under the assumptions of Theorem \ref{theorem:UL_convex}, suppose additionally that $\operatorname{Law}(P_{\tau}\, | \, Q_\tau) = \mathcal{N}(0,I_d)$. Then, for $\gamma_t = \frac{3}{t}$, the solution to \eqref{eq:UL} satisfies, for every $t \geq \tau > 0$,
\begin{equation}
    \operatorname{KL}(\operatorname{Law}(Q_t) \| \pi) \leq \frac{\tau^2\operatorname{KL}(\operatorname{Law}(Q_{\tau}) \| \pi) + 2 \mathcal{W}_2^2(\operatorname{Law}(Q_{\tau}), \pi)}{t^2}.
\end{equation}
\end{corollary}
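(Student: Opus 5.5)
Corollary \ref{cor:UL_convex} follows from Theorem \ref{theorem:UL_convex} once we evaluate the right-hand side of \eqref{eq:convex rate} under the extra hypothesis $\operatorname{Law}(P_\tau \mid Q_\tau)=\mathcal{N}(0,I_d)$. Write $\mu_\tau := \operatorname{Law}(Q_\tau)$, so that $\operatorname{Law}(Q_\tau,P_\tau)(\dd q,\dd p) = \mu_\tau(\dd q)\,\mathcal{N}(0,I_d)(\dd p)$. By the chain rule for KL divergence, disintegrating both measures along the $q$-marginal, we get
\begin{align*}
\operatorname{KL}\big(\operatorname{Law}(Q_\tau,P_\tau)\,\|\,\Pi_\tau\big) = \operatorname{KL}(\mu_\tau\|\pi) + \int_{\mathbb{R}^d} \operatorname{KL}\Big(\mathcal{N}(0,I_d)\,\Big\|\,\mathcal{N}\big(2\tau^{-1}(r_\tau(q)-q),I_d\big)\Big)\,\mu_\tau(\dd q).
\end{align*}

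The Gaussian KL divergence between two unit-covariance normals is half the squared distance of their means. The integrand therefore equals $\tfrac12\cdot\tfrac{4}{\tau^2}\|r_\tau(q)-q\|^2 = \tfrac{2}{\tau^2}\|r_\tau(q)-q\|^2$. Since $r_\tau$ is the $\mathcal{W}_2$-optimal transport map from $\mu_\tau$ to $\pi$, we have $\int \|r_\tau(q)-q\|^2\,\mu_\tau(\dd q) = \mathcal{W}_2^2(\mu_\tau,\pi)$. Hence
\begin{align*}
\operatorname{KL}\big(\operatorname{Law}(Q_\tau,P_\tau)\,\|\,\Pi_\tau\big) = \operatorname{KL}(\mu_\tau\|\pi) + \frac{2}{\tau^2}\mathcal{W}_2^2(\mu_\tau,\pi).
\end{align*}
Multiplying by $\tau^2/t^2$ and inserting the result into \eqref{eq:convex rate} gives exactly the claimed bound.

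The only point needing care is the applicability of the chain rule. The statement is purely about conditional laws, and the first marginal of $\Pi_\tau$ is $\pi$. Both measures have the same $p$-conditional structure (Gaussian with a $q$-dependent mean, which is measurable since $r_\tau$ is a Brenier map). Existence of $r_\tau$ is guaranteed because Assumption \ref{ass:FP-regularity}, used in Theorem \ref{theorem:UL_convex}, gives $\mu_\tau$ a density. If $\operatorname{KL}(\mu_\tau\|\pi)$ or $\mathcal{W}_2^2(\mu_\tau,\pi)$ is infinite, the bound holds trivially. I do not expect any genuine obstacle beyond this bookkeeping.
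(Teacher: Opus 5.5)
Your proof is correct and is essentially the paper's argument. The paper evaluates $\operatorname{KL}(\operatorname{Law}(Q_\tau,P_\tau)\,\|\,\Pi_\tau)$ through the expansion in Remark~\ref{rem:previous work technical}(a): when $\operatorname{Law}(P_\tau\,|\,Q_\tau)=\mathcal{N}(0,I_d)$, the current corrector $\mathcal{C}_{OT}$ vanishes and the first term reduces to $\operatorname{KL}(\operatorname{Law}(Q_\tau)\|\pi)$, which is the same computation you carry out with the KL chain rule and the closed-form Gaussian KL before inserting the result into Theorem~\ref{theorem:UL_convex}.
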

The proofs of Theorem \ref{theorem:UL_strongly-convex} and Corollary \ref{cor:UL_strongly-convex} can be found in Section \ref{sec:main_sconvex} and those of Theorem \ref{theorem:UL_convex} and Corollary \ref{cor:UL_convex} in Section \ref{sec:main_convex}.

\subsection{Heuristics for describing accelerated gradient flows through Fenchel games}
\label{sec:games}

The accelerated gradient flows \eqref{eq:polyak_ODE} and \eqref{eq:nesterov_ODE} are often interpreted as damped Hamiltonian systems, e.g. \cite{wang2022accelerated, maddison2018hamiltonian} or \cite[Section 3.2.2]{ma2021nesterov}. Here, we will develop and follow an alternative viewpoint: after a change of variables, these dynamics can be understood as coupled strategies in a two-player game, called the \emph{Fenchel} game. This builds directly on the work \cite{fenchelgames}, in which the authors put forward such an interpretation for discrete-time convex optimisation algorithms. We refer the reader to Appendix \ref{app:Fenchel Rd}, in which we extend their work to continuous time, covering \eqref{eq:polyak_ODE} and \eqref{eq:nesterov_ODE}; this may serve as a gentle introduction to Fenchel games, as the main ideas, in particular those underpinning the acceleration phenomenon, already emerge in this setting of unconstrained convex optimisation on $\mathbb{R}^d$. \\
The core concepts in the context of convex $f$ and $V$ will also be explained in the following; modifications to the strongly convex case will be discussed in Section \ref{sec:main_sconvex} and Appendix \ref{sec:polyak}. Since several concepts emerge from the online learning literature, we refer to general references on this subject, such as \cite{hazan2019online, shalevshwartz2012online} and \cite{modernintroductiononlinelearning}. 

\paragraph{Accelerated gradient flows from a game perspective.} 
The main idea of \cite{fenchelgames} is to extend the optimisation problem 
\begin{align}\label{eq:optimisation_problem}
\min_{x \in \mathbb{R}^d}f(x)
\end{align}
to the saddle point problem
\begin{align}
\label{eq: saddle g}
    \min_{x \in \mathbb{R}^d} \max_{y \in \mathbb{R}^d} g_f(x,y)
\end{align}
by introducing the two-player payoff function 
\begin{equation}
\label{eq:g Rd}
g_f(x,y) := \langle x,y\rangle - f^*(y).
\end{equation}
Here, $f^*:\mathbb{R}^d \rightarrow (-\infty, + \infty]$ denotes the Fenchel conjugate  of $f$  (see Section \ref{sec:preliminaries}); crucially, it holds that $\max_{y \in \mathbb{R}^d} g_f(x,y) = f(x)$, connecting \eqref{eq:optimisation_problem} to \eqref{eq: saddle g}. In round $s$ of the game, the x-player plays the move $x_s$, the y-player plays the move $y_s$, and the function $g_f$ serves as a cost for both players: The x-player pays $g_f(x_s,y_s)$ and the y-player pays $-g_f(x_s,y_s)$. Roughly speaking, the x-player aims to minimise $g_f$ (reacting to the y-player's moves $(y_u)_{0 \le u \le s}$), the y-player aims to maximise $g_f$ (reacting to the x-player's moves $(x_u)_{0 \le u \le s}$), and the interplay of both over time identifies the saddle point in \eqref{eq: saddle g}. In particular, combining continuous-time versions of the follow-the-leader algorithm and online gradient descent, both well-established in the online learning literature \cite[Chapter 4]{modernintroductiononlinelearning}, leads to a coupled system that is equivalent to the aforementioned accelerated gradient flows; see Appendix \ref{app:Fenchel Rd}. There are subtleties: for stability reasons, the saddle point corresponds to long-time averages of $x_t$ and $y_t$, weighted in a particular way. These issues will be discussed below (for the stochastic case) and in Appendix \ref{app:Fenchel Rd}.

\paragraph{A Fenchel game for sampling.} In what follows, we develop a stochastic Fenchel game formulation that allows us to analyse underdamped Langevin dynamics with convex potential $V$. Here we give an overview; technical details will be explained in Section \ref{sec:main_convex}. To simplify the notation, we will abbreviate the differential entropy \eqref{eq:ent} of a random variable $X$ with Lebesgue density $\rho_X$ by $\Ent(X) = \Ent(\rho_X)$. \\
\\
Mimicking the construction in \eqref{eq:optimisation_problem}-\eqref{eq:g Rd}, we extend the free energy $\mathcal{F}$ \eqref{eq:free energy} as follows,
\begin{align}
\label{eq:G}
    \mathcal{G}(X,Y) := \mathbb{E}[g(X,Y)] + \operatorname{Ent}(X),
\end{align}
defined for appropriately integrable $\mathbb{R}^d$-valued random variables $X$ and $Y$ (not necessarily independent). Here, as in \eqref{eq:g Rd}, we set
\begin{equation}
    g(x,y):= g_V(x,y) = \langle x, y \rangle - V^*(y),
\end{equation}    
but drop the subscript for notational convenience.
Again, one recovers the free energy $\mathcal{F}$ from $\mathcal{G}$ via 
\begin{align}
    \mathcal{F}(\operatorname{Law}(X)) = \sup_Y \mathcal{G}(X,Y),
\end{align}
according to $\sup_y g(x,y) = V(x)$, which is implied by the properties of the Fenchel conjugate $V^*$ (see Section \ref{sec:preliminaries}). 
The analogue of the saddle point problem \eqref{eq: saddle g} becomes
\begin{equation}
\label{eq:saddle G}
\min_X \max_Y \mathcal{G}(X,Y),
\end{equation}
so that, in broad terms, 
the X-player's aim is to minimise $\mathcal{G}$, while the Y-player aims to maximise $\mathcal{G}$.
\begin{remark}[Optimality conditions]
\label{rem:saddle}
For a fixed random variable $X$, it can be shown that $\argmax_Y \mathcal{G}(X,Y) = \nabla V(X)$, see Appendix \ref{sec:saddle_point}. Furthermore, $X \in \argmin_X \mathcal{F}(\Law X)$ implies $X \sim \pi$. Consequently, we may interpret
\begin{equation}
\label{eq:XY saddle}
X^* \sim \pi, \quad  Y^*  =  \nabla V(X^*)
\end{equation}
as a saddle point in \eqref{eq:saddle G} that the two players strive to find. However, we would like to warn the reader that a rigorous definition of saddle points for functionals of the form \eqref{eq:G} is neither attempted nor needed in this work; see Appendix \ref{sec:saddle_point} for a brief discussion.
\end{remark}
The following strategies, played in round $0 \leq s \leq t$, are not uniquely determined by \eqref{eq:G} or \eqref{eq:saddle G}, but they i) recover underdamped Langevin dynamics \eqref{eq:UL} after a change of variables, and ii) will be shown to be \emph{no-regret} strategies (hence ``good''), in a sense to be made precise below.
\begin{itemize}
\item Given the X-player's moves $(X_u)_{0 \le u \le s}$ up to time $s$, the Y-player adopts a best-in-hindsight strategy (also known as \emph{follow-the-leader} \cite[Section 2.2]{shalevshwartz2012online} or \cite[Algorithm 4]{fenchelgames}):
\begin{equation}
\label{eq:Y FTL}
Y_s := \argmax_Y \left(\frac{1}{A_s} \int_0^s\mathcal{G}(X_u,Y) \, \alpha_u \dd u \right).  
\end{equation}
In this display, $\alpha \in C^1 (\mathbb{R}_{\ge 0},\mathbb{R}_{> 0})$ is a time-dependent weight, and
\begin{equation}
\label{eq:def A}
A_s = \int_0^s \alpha_u \dd u
\end{equation}
is its integral. The precise choice of $\alpha$ will be detailed in Section \ref{sec:main_convex} (and in Section \ref{sec:main_sconvex}) and will ultimately be linked to the friction $\gamma_t$ in \eqref{eq:UL}. By definition, the Y-player's move at time $s$ is obtained by maximising the payoffs $\mathcal{G}(X_u,\cdot)$, averaged over the past according to $\alpha$. Using properties of the Fenchel conjugate (see Section \ref{sec:preliminaries}), the maximisation can be carried out explicitly (see Section \ref{sec:FTL convex}) and leads to 
\begin{align}
    Y_s = \nabla V(\bar{X}_s),
\end{align}
where $\bar{X}_s$ is the weighted time average
\begin{equation}
\label{eq:X bar}
\bar{X}_s = \frac{1}{A_s} \int_0^s X_u \alpha_u \dd u.
\end{equation}

\item Given the Y-player's move $Y_s$ (fixed for the sake of this argument and as a rule of the game), the X-player updates her strategy in a way that resembles a Wasserstein gradient flow update on
\begin{equation}\label{eq:functional_x-player}
\rho \mapsto \int_{\mathbb{R}^d} \langle x, Y_s \rangle \rho (\dd x) + \Ent(\rho),
\end{equation}
which is \eqref{eq:G} for fixed (deterministic) $Y = Y_s$, expressed in terms of the density $\rho_X$ associated with $X$. The usual Wasserstein gradient flow heuristics lead to a drift $-Y_s$ (coming from the first term) and Brownian noise (induced by the entropic term), so that the X-player's strategy takes the form 
\begin{equation}
\label{eq:X OMD}
\dd X_s = - \beta^Y_s Y_s \dd s + \beta^W_s \dd W_s, 
\end{equation}
with time-dependent coefficients $\beta_s^Y$ and $\beta_s^W$ that depend on the choice of $\alpha$ (see Sections \ref{sec:main_convex} and \ref{sec:main_sconvex}). Since the functional \eqref{eq:functional_x-player} is time-dependent due to the presence of $Y_s$, the X-player's strategy resembles, more precisely, an online gradient flow.
\end{itemize}

After an appropriate change of variables (see also Sections \ref{sec:main_convex} and \ref{sec:main_sconvex}), the coupled dynamics defined by \eqref{eq:Y FTL}, \eqref{eq:X bar} and \eqref{eq:X OMD} transform into underdamped Langevin dynamics \eqref{eq:UL}; more specifically, the position $Q_s$ is identified with the time average $\bar{X}_s$ of the X-player's move, while the velocity $P_s$ is a time-dependent linear combination of $X_s$ and $\bar{X}_s$:
\begin{equation}
\label{eq:QP-X}
Q_s = \bar{X}_s, \qquad P_s = \frac{\alpha_s}{A_s} (X_s - \bar{X}_s), \quad s > 0. 
\end{equation}
In particular, $Q_s = \bar{X}_s$ corresponds to the common practice of returning an average of the strategies played in online learning, especially in online convex optimisation, known as online-to-batch conversion, see e.g. \cite[Section 15.2]{modernintroductiononlinelearning}). In the particular context of saddle point problems, online-to-batch conversion stabilises otherwise potentially unstable ascent-descent dynamics.

\paragraph{Assessing strategies via competitors and regret.} Beyond a novel interpretation of underdamped Langevin dynamics, the above framework also allows us to quantify its convergence rates. However, to prove Theorems \ref{theorem:UL_strongly-convex} and \ref{theorem:UL_convex} using the representation of \eqref{eq:UL} through \eqref{eq:Y FTL}-\eqref{eq:X OMD}, an additional ingredient is needed: the introduction of competitors who play benchmark strategies. These competing strategies can be motivated by the optimality conditions in Remark \ref{rem:saddle}. If the X- and Y-players perform sufficiently close to them in terms of accumulated loss (defined below), their joint actions will over time converge, with a rate depending on the comparative performance of their strategies (see \eqref{eq:regret_to_suboptimality} below).\\
\\
Assume we are at the final time $t$ of the game and the X- and Y-players have played the actions $\{X_s\}_{s \leq t}$ and $\{Y_s\}_{s \leq t}$. Their competitors have played $\{X^{\text{comp}}_s\}_{s \leq t}$ and $\{Y^{\text{comp}}_s\}_{s \leq t}$. We write the accumulated time-weighted losses of the players as
\begin{subequations}
\label{eq:losses_intro}
\begin{align}
\label{eq:X loss}
\mathcal{L}^X_t & = \int_0^t \mathbb{E}[g(X_s,Y_s)]\,\alpha_s \dd s + A_t \Ent(\bar{X}_t),
    \\
\label{eq:Y loss}
\mathcal{L}^Y_t & = - \int_0^t \mathbb{E}[g(X_s,Y_s)]\, \alpha_s \dd s.
\end{align}
\end{subequations}
These are constructed from \eqref{eq:G}, but there is a subtlety concerning the entropic term in \eqref{eq:X loss}, see Remark \ref{rem:FISTA} below. Notice that the entropic term could be dropped from \eqref{eq:Y loss} since it does not depend on $Y$. We highlight that these forms of $\mathcal{L}_t^X$ and $\mathcal{L}_t^Y$ only apply when $V$ is convex (but not necessarily strongly convex), i.e. for the proof of Theorem \ref{theorem:UL_convex}. For strongly convex potentials, slight adaptations must be made; see Section \ref{sec:main_sconvex}. The actions of the competitors are recorded similarly as 
\begin{subequations}
\label{eq:comp losses}
\begin{align}
   \label{eq:X comp loss} \mathcal{L}^{\text{comp}\,X}_t & = \int_0^t \mathbb{E}[g( X^{\text{comp}}_s,Y_s)] \alpha_s \dd s + A_t \Ent\left(X^{\text{comp}}_t\right), \\
    \label{eq:Y comp loss} \mathcal{L}^{\text{comp}\,Y}_t & = - \int_0^t \mathbb{E}[g(X_s, Y^{\text{comp}}_t)] \alpha_s \dd s.
\end{align}
\end{subequations}
Now, the total performance measures of the X- and Y-players are given by their so-called \textit{regrets} with respect to their competitors: 
\begin{subequations}
\label{eq:regrets_intro}
\begin{align}
\label{eq:X regret abstract}
    \operatorname{Reg}_t^X = \mathcal{L}^X_t - \mathcal{L}^{\text{comp}\,X}_t,\\
    \operatorname{Reg}_t^Y = \mathcal{L}^Y_t - \mathcal{L}^{\text{comp}\,Y}_t.
\end{align}
\end{subequations}
We highlight that the concept of regrets is very well-established in the online learning literature, see e.g. \cite{hazan2019online, modernintroductiononlinelearning, shalevshwartz2012online}. For appropriately defined competitors, Propositions \ref{theorem:sub-optimality} and \ref{theorem:sub-optimality_sc} below show that such regrets can  quantify the sub-optimality of $\operatorname{Law}(\bar{X}_t)$ with respect to the target distribution $\pi(\dd q) \propto e^{-V(q)}\dd q$ in terms of the free energy:
\begin{align}\label{eq:regret_to_suboptimality}
    \mathcal{F}(\operatorname{Law}(\bar{X}_t)) - \mathcal{F}(\pi) \leq \frac{\operatorname{Reg}_t^X +  \operatorname{Reg}_t^Y}{A_t}.
\end{align}
Since $\operatorname{KL}(\operatorname{Law}(\bar{X}_t) \| \pi) = \mathcal{F}(\operatorname{Law}(\bar{X}_t)) - \mathcal{F}(\pi)$, these regrets equivalently control the KL divergence of $\operatorname{Law}(\bar{X}_t)$ with respect to $\pi$. Strategies of the players for which $\operatorname{Reg}_t^X$ and $\operatorname{Reg}_t^Y$ are bounded are often called \textit{no-regret} strategies, see e.g. \cite[Section 3.1]{fenchelgames}, and are thus of particular interest because they allow one to establish convergence. Setting $Q_t = \bar{X}_t$ as in \eqref{eq:QP-X} directly extends this result to the physical underdamped Langevin system \eqref{eq:UL}. 
\paragraph{Proof roadmap.} The above considerations imply that the proofs of Theorems \ref{theorem:UL_strongly-convex} and \ref{theorem:UL_convex} can be broken down into three parts. First, we need to define competitors such that the regrets with respect to them bound the optimality gap in $\mathcal{F}$ as in \eqref{eq:regret_to_suboptimality}. Then, for proposed strategies of the X- and Y-player, we need to establish appropriate upper bounds on $\operatorname{Reg}_t^X$ and $\operatorname{Reg}_t^Y$. Finally, we need to relate the combined strategies to the physical underdamped Langevin systems via the change of variables in \eqref{eq:QP-X}.

\begin{remark}[Competitor constraints]
Nontrivial bounds on the regrets in \eqref{eq:regrets_intro} will only be possible if the actions of the competitors are appropriately constrained. In line with the usual set-up in online learning \cite{modernintroductiononlinelearning}, the Y-competitor will only be allowed to play a single, time-independent action, but he is allowed to make his move $Y^{\text{comp}}_t$ in hindsight, at the end of the game in round $t$. Similarly, the law of the X-player's action will be time-independent (but time-dependent as a random variable). Full details will be given in Section \ref{sec:main_convex}.
\end{remark}

\begin{remark}[Distinctive features of the sampling game]\label{rem:FISTA}
We will be able to analyse the Y-player's strategy and regret \emph{pathwise}, relying fairly straightforwardly on analogous arguments for the optimisation games on $\mathbb{R}^d$ detailed in Section \ref{sec:nesterov_ODE} and \ref{sec:polyak}, see also \cite{fenchelgames}.  The treatment of the X-player comes with more subtleties.  It is non-standard that $\mathcal{L}_t^X$ in \eqref{eq:X loss} incorporates the entropy of the time-averaged actions $\bar{X}_t$, but this construction is forced by the lack of appropriate convexity of the entropy functional with respect to the mixture of random variables (indeed, $\Ent(\lambda X_0 + (1-\lambda) X_1) \nleq \lambda \Ent(X_0) + (1-\lambda) \Ent(X_1)$). As a result, the analysis of the X-player's strategy becomes more involved, with the proof relying on Optimal Transport, see Section \ref{convex:noisy}. More fundamentally, the splitting in $\mathcal{G}$ (treating the potential and the entropic part of $\mathcal{F}$ differently) and the form of $\mathcal{L}_t^X$ suggest a connection to proximal methods, such as FISTA; see \cite{beck2009fast} or \cite[Section 4.5.3]{fenchelgames}. This is complementary to previous work on the (accelerated) minimisation of $\mathcal{F}$ which does not make such a distinction; see, e.g., \cite{wang2022accelerated, pmlr-v97-taghvaei19a, onlinelearningtransportminimal}. 
\end{remark}

\begin{remark}[Generalisation and initial conditions]
To accommodate general initial conditions and obtain the optimal rates from Theorem \ref{theorem:UL_convex}, the definitions in \eqref{eq:def A} and \eqref{eq:X bar} need slight extensions, which are detailed in Sections \ref{sec:main_convex} and \ref{sec:main_sconvex}. See in particular Definition \ref{def:weights} and Remark \ref{remark:technical_tau}.
\end{remark}
 
\subsection{Related work}
\label{sec:related}

\paragraph{Convergence analysis of underdamped Langevin dynamics.}
Since the early 2000s, the understanding of convergence to equilibrium for underdamped Langevin dynamics has deepened considerably, both in identifying the mechanisms that yield convergence despite diffusion acting only in the velocity variable, and in developing quantitative convergence estimates. Foundational approaches are based on hypocoercivity and related spectral methods \cite{DesvillettesVillani2001,HerauNier2004,Villani2009Hypocoercivity,DolbeaultMouhotSchmeiser2015}. More recent work has developed quantitative approaches based on functional inequalities, including space-time log-Sobolev inequalities \cite{LiLu2026SpaceTimeLSI} and space-time Poincar\'e or Poincar\'e--Lions inequalities \cite{AlbrittonArmstrongMourratNovack2024,cao2023explicit,BrigatiStoltz2025,BrigatiEtAl2024ExplicitConvergence}, as well as related lifting and flow-Poincar\'e perspectives \cite{EberleEtAl2025Convergence,EberleLorler2026Nonreversible,BrigatiLorlerWang2026}. Other work employs coupling-based arguments \cite{Eberle2017CouplingsAQ,pmlr-v75-cheng18a,DalalyanRiouDurand2020}. Interestingly, such analyses frequently exploit mixed, or ``twisted'', position--velocity coordinates, for instance through variables of the form $q+cp$, or their analogues for differences between coupled trajectories \cite{Eberle2017CouplingsAQ,pmlr-v75-cheng18a,DalalyanRiouDurand2020}. More recently, \cite{AltCheZha26SCIV} explicitly adopt this terminology and develop a time-dependent twisted coordinate system. Our change of variables \eqref{eq:QP-X} has a closely related form, but arises naturally from the Fenchel-game representation.
\\

The closest result to Theorem~\ref{theorem:UL_strongly-convex} is due to \cite{lu2026sharp}. Assuming a log-Sobolev inequality with parameter \(\sigma > 0\) for the invariant positional marginal \(\pi(\dd q) \propto e^{-V(q)}\dd q\), it is shown that suitably damped underdamped Langevin dynamics satisfy
\begin{equation}
\label{eq:lu rate}
\KL(\Law(Q_t,P_t)\|\Pi)
\le
C e^{-c\sqrt{\sigma}t}
\KL(\Law(Q_0,P_0)\|\Pi),
\end{equation}
for universal constants \(c<1\) and \(C>1\). Importantly, \eqref{eq:lu rate} controls convergence of the joint law of \((Q_t,P_t)\), whereas our bound \eqref{eq:sc convergence} is stated directly for the positional marginal. Thus, while the two results control different quantities, both exhibit the same characteristic \(\sqrt{\sigma}\) scaling of the decay rate. Moreover, both analyses rely on optimal-transport inequalities and related identities. The arguments in \cite{lu2026sharp}, however, follow a hypocoercive/kinetic perspective, whereas ours arise directly from the optimisation and Fenchel-game viewpoint, yielding a constructive convergence analysis specifically on the positional marginal and without an additional multiplicative prefactor.

\paragraph{Sampling via descent dynamics on spaces of measures.}
Our proof strategy draws on the now-standard analogy between sampling and optimisation through descent dynamics on probability measures. Following the variational formulation of the Fokker--Planck equation in \cite{JKO}, Wasserstein gradient flows have become a central tool in this area; see, e.g., \cite{ambrosio2008gradient,garciatrillos2023optimization,chen2023sampling}. More generally, constructing a flow-based sampler entails choosing an objective functional and a geometry, or Onsager operator, on probability space, followed by a numerical approximation of the resulting flow. The KL divergence is distinguished within the classes of $f$-divergences and Bregman divergences because its gradient flows do not require the target normalising constant \cite{chen2023sampling,crucinio2025unique}. With the $2$-Wasserstein geometry, the KL gradient flow is the Fokker--Planck equation associated with overdamped Langevin dynamics \cite{JKO}; with a kernelised Stein geometry, it gives the mean-field flow underlying SVGD \cite{liu2016svgd,liu2017svgdflow}. Discrete-time variational and splitting viewpoints have led both to new sampling algorithms \cite{wibisono2018sampling,bernton2018langevin,chen2022improvedanalysisproximalalgorithm} and to analyses or reinterpretations of existing methods \cite{durmus2019analysis,chopin2024connection}. We also note that \cite{onlinelearningtransportminimal} uses online learning to describe and analyse Wasserstein gradient flows in continuous and discrete time.

A related but distinct line of work concerns inertial or accelerated dynamics for objectives on spaces of probability measures. Because Polyak's ODE \eqref{eq:polyak_ODE} and Nesterov's ODE \eqref{eq:nesterov_ODE} are paradigmatic accelerated gradient flows, with accelerated convergence rates as detailed in Section~\ref{sec:optimisation}, it is natural to ask what their counterparts on spaces of probability measures are, and what theoretical and practical insights can be gained from these analogies. Hamiltonian dynamics on Wasserstein space were developed in \cite{ambrosio2008hamiltonian,chow2020wassersteinhamiltonian}. Deterministic accelerated transport and particle flows have been studied in \cite{pmlr-v97-taghvaei19a,liu2019acceleratingparvi,wang2022accelerated,Chen2025Accelerating,tang2026nesterov}; see also \cite{stein2026acceleratedsvgf} for an accelerated Stein variational flow. In the distinct, sampling-specific stochastic direction, \cite{ma2021nesterov} interprets underdamped Langevin dynamics as accelerated KL descent on phase space, while \cite{li2022hessianfree} constructs a Nesterov-inspired diffusion and corresponding discretisations. Several of these analyses adapt Lyapunov functionals from finite-dimensional acceleration, such as those in \cite{wilson2021lyapunov}.

Most closely related in terms of our explicit rates, \cite{Chen2025Accelerating} and \cite{pmlr-v97-taghvaei19a} lift the dynamics to phase space and state convergence results for the positional marginal, but consider deterministic evolutions. In contrast, our game-based approach separates the deterministic forcing from the stochastic forcing, thus retaining stochastic dynamics while restricting the analysis to the positional marginal. More explicit comparisons to \cite{wang2022accelerated, pmlr-v97-taghvaei19a, lu2026sharp} are given in Remark~\ref{rem:previous work technical}.

\paragraph{Accelerated gradient methods and saddle points} 
Nesterov's celebrated accelerated gradient method
for the minimisation of an unconstrained convex function \cite{nesterov1983method} is often understood via estimate sequences \cite[Section 2.2.1]{nesterov2018lectures}. Beyond several other interpretations, we highlight that the method can be viewed through the template of optimal gradient methods \cite[Section 4.3]{daspremont2021acceleration}. Indeed, once one seeks a first-order method with the best worst-case
guarantee, a minimax structure arises naturally: the algorithm is chosen to
minimise its error while an adversarial problem instance seeks to maximise
it. This perspective underlies performance-estimation-based approaches to the
design of optimized gradient methods
\cite{drori2014performance,kim2016optimized}. The description of accelerated gradient methods as strategies in a (Fenchel) game \cite[Section 4.5]{fenchelgames} is similar in spirit, as it is also based on a saddle point formulation. Since this latter work falls under the umbrella of online learning, we also refer to the discussion of solving saddle point problems using online convex optimisation in \cite[Chapter 15]{modernintroductiononlinelearning}.

\section{Assumptions and preliminaries}
\label{sec:preliminaries}

Throughout, recall that the variables $Q_t$ and $P_t$ refer to the physical system, i.e., they follow the underdamped Langevin dynamics in \eqref{eq:UL}, while the variables $X_t, \bar{X}_t, Y_t$ are those associated with the sampling game. Moreover, the following standing assumption is imposed. 

\begin{assumption}[Regularity]
\label{ass:FP-regularity}
Fix an initial time $\tau\ge 0$, and assume that $V \in C^1(\mathbb{R}^d)$ is convex and $e^{-V}$ is integrable,
\[
Z:=\int_{\mathbb R^d} e^{-V(q)}\dd q<\infty.
\]
Furthermore,
\begin{enumerate}
\item[i)]
suppose that \eqref{eq:UL} admits a non-explosive solution $(Q_t,P_t)$ with strictly positive Lebesgue density $\widehat{\rho}_t$, satisfying the associated Fokker--Planck equation 
\begin{equation}
\label{eq:FKP}
    \partial_t\widehat{\rho}_t
    =
    -\nabla_q\cdot(p\widehat{\rho}_t)
    +\nabla_p\cdot
    \bigl((\nabla V(q)+\gamma_t p)\widehat{\rho}_t\bigr)
    +\gamma_t\Delta_p\widehat{\rho}_t,
\end{equation}
classically on $[\tau,\infty)\times\mathbb R^{2d}$.
For every $T>\tau$, assume that there exists a constant $C_T>0$ such that
\begin{equation}
\label{eq:FKP-integrability1}
\sup_{t\in[\tau,T]}
    \int_{\mathbb R^{2d}}
    e^{C_T(\|q\|^2+\|p\|^2)}
    \left(
        \widehat{\rho}_t(q,p)
        +\|\nabla\widehat{\rho}_t(q,p)\|
        +\|\nabla_p^2\widehat{\rho}_t(q,p)\|
    \right)
    \dd q \dd p
    <\infty
\end{equation}
and
\begin{equation}
\label{eq:FkP-integrability2}
    \sup_{t\in[\tau,T]}
    \int_{\mathbb R^{2d}}
    \|\nabla V(q)\|^2
    \widehat{\rho}_t(q,p)
    \dd q\dd p
    <\infty,
\end{equation}
and assume that the conditional Fisher information is locally integrable in time:
\begin{equation}
\label{eq:FP-fisher-regularity}
    \int_{\tau}^T
    \int_{\mathbb R^{2d}}
    \left\|
        \nabla_p
        \ln
        \frac{\widehat{\rho}_t(q,p)}
             {\widehat{\pi}(q,p)}
    \right\|^2
    \widehat{\rho}_t(q,p)
    \dd q\dd p\dd t
    <\infty,
\end{equation}
\item[ii)] suppose that the marginal velocity 
\begin{equation}
\label{eq:def v}
v_t(q) := \mathbb{E} [P_t | Q_t = q], \qquad t>\tau,
\end{equation}
satisfies $v \in C^1((\tau, \infty) \times \mathbb{R}^d,\mathbb{R}^d)$ and generates a characteristic flow $\Phi_{t,s}$: On every compact subinterval $I \subset (\tau,\infty)$, the ODE 
\begin{equation}
\label{eq:flow ODE}
\frac{\dd}{\dd s}\Phi_{t,s}(q)
=
v_s(\Phi_{t,s}(q)),
 \qquad
\Phi_{t,t}(q)=q,
\end{equation}
is well-posed for $s,t \in I$, we have $(\Phi_{t,s})_{\#} \rho_t = \rho_s$ with $\rho_t = \Law(Q_t)$, and the acceleration 
\begin{equation}
\label{eq:acceleration}
a_t(q) := \partial_t v_t(q) + \nabla v_t(q) v_t(q)
\end{equation}
is well-defined and satisfies
\begin{equation}
\label{eq:acceleration bound}
 \sup_{t\in I}
    \int_{\mathbb R^d}
    \| a_t(q)\|^2\,\rho_t(\dd q)
    <\infty. 
\end{equation}
\end{enumerate}
\end{assumption}

\begin{remark}
The assumptions in part (i) are made in particular to justify integration by parts in the proofs of Propositions \ref{prop:xplayer_convex} and \ref{prop:xplaer_sconvex}, and to ensure that intermediate quantities (such as entropy and Fisher information) are finite. These conditions are expected under
standard smoothness, coercivity, and growth assumptions on the
potential (see, for example, \cite{Chaudru2023,HerauNier2004}) and could likely be relaxed with more (technical) effort. Part (ii) is only used to circumvent potential non-differentiability of the curve of optimal transport maps $s \mapsto r_s$ in the proofs of Lemmas \ref{lemma:L2-distance_convex} and \ref{L2-distance_sc}; see also Remark \ref{rem:Brenier regularity}. The assumptions in part (ii) can be established under appropriate conditions on $V$ according to \cite[Assumption 2.2]{lu2026sharp}.
\end{remark}

\paragraph{Basic Fenchel properties.}
The Fenchel conjugate $V^*:\mathbb{R}^d \rightarrow (-\infty, + \infty]$ is defined by 
\begin{equation}
\label{eq:def Fenchel}
        V^*(y) = \sup_{x \in \mathbb{R}^d} \{\langle x,y\rangle - V(x)\}.
\end{equation}
Following the literature on convex conjugates or Legendre transforms, see \cite[Section 12]{Rockafellar_1970} or \cite[Chapter 3]{carlier_book}, we recall their following basic properties, valid for all $(x,y) \in \mathbb{R}^{2d}$:
\begin{enumerate}\label{FY_In}
    \item[(F1)] The Fenchel-Young Inequality:
    \begin{equation}
    \label{eq:Fenchel IE}
        V(x) + V^*(y) \geq \langle x,y\rangle.
    \end{equation}
    \item[(F2)] The biconjugate is the identity: 
    \begin{align}\label{eq:biconjugate_id}
       \sup_{y \in \mathbb{R}^d}\{\langle x,y\rangle - V^*(y)\}=  V(x).
    \end{align}
    \item[(F3)] The Fenchel-Young Equality: 
    \begin{align}\label{FY_Eq}
        y = \nabla V(x) \iff x \in \partial V^*(y) \iff \langle x,y\rangle = V(x) + V^*(y).
    \end{align}
    \item[(F4)] In particular,  
    \begin{align}\label{gradient_maximises}
    \{\nabla V(x)\}  = \argmax_{y \in \mathbb{R}^d}\{\langle y,x\rangle - V^*(y)\}.
    \end{align}
\end{enumerate}

\paragraph{Weights and averages.}
We finish this section by generalising \eqref{eq:def A} and \eqref{eq:X bar}:

\begin{definition}\label{def:weights}
Fix an initial time $\tau \geq 0$, a corresponding initial weight $A_{\tau} > 0$ and initial average $\bar{X}_{\tau} \in \mathbb{R}^d$, as well as a positive weighting function $\alpha \in C^1([\tau,\infty),\mathbb{R}_{> 0})$. We define
\begin{enumerate}
\item[i)] the integrated weight
\begin{equation}
\label{eq:A def complicated}
A_t := A_{\tau} + \int_{\tau}^t \alpha_s \, \dd s, \qquad t \ge {\tau},
\end{equation}
\item[ii)] for any X-player trajectory $(X_s)_{\tau \le s\leq t}$, the corresponding time average
\begin{equation}
\label{eq:X bar complicated}
\bar{X}_t := \frac{A_{\tau}}{A_t}\bar{X}_{\tau} + \frac{1}{A_t} \int_{\tau}^t X_s \alpha_s \dd s, \qquad t \geq \tau.
\end{equation}
\end{enumerate}
\end{definition}
\begin{remark}\label{remark:technical_tau}
Notice that for $\tau = 0$ and  $A_{\tau} = 0$, the definitions in \eqref{eq:A def complicated} and \eqref{eq:X bar complicated} reduce to \eqref{eq:def A} and \eqref{eq:X bar}. The reasons for allowing $\tau > 0$ and $A_{\tau} >0$ are mostly technical. First, these choices allow us to circumvent the singularity at $t=0$ of $\gamma_t = \frac{3}{t}$ in Theorem \ref{theorem:UL_convex}. Second, note that \eqref{eq:def A} implies $A_0 = 0$, which makes it challenging to extend \eqref{eq:QP-X} to $t = 0$ or to define $\bar{X}_0$ in \eqref{eq:X bar}. We note in passing that $\bar{X}_t$ as defined in \eqref{eq:X bar complicated} maintains the interpretation of an average: indeed $\bar{X}_t = \int_{[\tau,t]} \hat{X}_s \, \mu_t (\dd s)$, for the probability measure  $ \mu_t(\dd s) = \frac{A_{\tau}}{A_t}\delta_{\tau}(\dd s) + \frac{1}{A_t}\alpha_s \dd s$ on $[\tau,t]$, and initial value extension $\hat{X}_s = X_s$ (for $s > \tau$), $\hat{X}_{\tau} = \bar{X}_{\tau}$.
\end{remark}

\section{Continuous-time Fenchel games for log-concave sampling}\label{sec:main_convex}

In this section, we present the proof of Theorem \ref{theorem:UL_convex}. Section \ref{sec:main_sconvex} makes appropriate adaptations for the case where $V \in C^1(\mathbb{R}^d)$ is $\sigma$-strongly convex and establishes Theorem \ref{theorem:UL_strongly-convex}. Recall the broad strategy outlined in Section \ref{sec:games}: We introduce two fictitious players, called the X- and the Y-player. Jointly, their dynamics approach the coupled state in \eqref{eq:XY saddle}, which may be thought of as a saddle point of \eqref{eq:G}. The quality of the players' strategies is compared against competitors and measured in terms of regrets $\Reg_t^X$ and $\Reg_t^Y$. Estimates on these regrets will then imply Theorem \ref{theorem:UL_convex} via \eqref{eq:regret_to_suboptimality}. \\
\\
Throughout this section, we start the game at $\tau > 0$ and fix a corresponding initial weight $A_\tau > 0$ as well as a positive weighting function $\alpha \in C^1([\tau,\infty),\mathbb{R}_{> 0})$ to construct the integrated weight $A_t$ and averaged variable $\bar{X}_t$ as in Definition \ref{def:weights}. The initial average $\bar{X}_{\tau}$ in \eqref{eq:X bar complicated} is chosen with  $\operatorname{Law}(\bar{X}_{\tau}) = \operatorname{Law}(Q_{\tau})$. \\
\\
We start by specifying the competitors and then the corresponding regrets  \eqref{eq:regrets_intro}. The losses of the players (denoted by $\mathcal{L}_t^X$ and $\mathcal{L}_t^Y$ in \eqref{eq:losses_intro}) present no particular difficulty, although nuanced weightings in the spirit of Definition \ref{def:weights} will apply.  To define the competing losses \eqref{eq:comp losses}, we need to specify the competitors:
\begin{itemize}
\item In line with established conventions in the theory of online learning, e.g. \cite[Chapter 1]{modernintroductiononlinelearning}, the Y-player is compared against the best \emph{fixed competitor in hindsight} 
\begin{subequations}
\label{eq:Y comp convex}
\begin{align}
\label{eq:Y comp def}
Y^\text{comp}_t & := \argmin_Y  \left\{A_\tau\left(\mathbb{E}\left[V^*(Y)-\langle \bar{X}_\tau,Y\rangle\right]\right) +\int_\tau^t \mathbb{E}\left[V^*(Y)-\langle X_s,Y\rangle \right]\alpha_s\dd s\right\}  
\\
& = \argmin_Y \left \{ A_t \mathbb{E} \left[ V^*(Y) - \langle \bar{X}_t,Y \rangle   \right]\right\},
\label{eq:Y comp simplified}
\end{align}
\end{subequations}
where the loss in \eqref{eq:Y comp def} is augmented by the $A_\tau$-contribution according to Definition \ref{def:weights}, and the simplification in \eqref{eq:Y comp simplified} follows directly from \eqref{eq:X bar complicated}. We emphasise that $Y^{\text{comp}}_t$ is chosen \emph{in hindsight} (i.e., for a given terminal time $t$ and knowing $(X_s)_{\tau \le s \le t}$), but is \emph{fixed} (i.e., has the same strategy in all rounds $s \leq t$). However, it is in general random, responding to the stochasticity of $(X_s)_{\tau \le s \le t}$. 
\item Following Remark \ref{rem:saddle}, it is natural to enforce $X_s^{\text{comp}} \sim \pi$, for all $s \in [\tau,t]$, so that at the level of distributions, the competitor strategy does not depend on time. Notice that  in \eqref{eq:X comp loss}, the entropic term depends on $X_s^{\text{comp}}$ only through its law, and so to maximise $\mathcal{L}^{\text{comp} \, X }_t$ (equivalently, to minimise $\Reg_t^X$, and hence, to optimise the bound \eqref{eq:regret_to_suboptimality}), we need to set
\begin{equation}
\label{eq:X comp def}
X_s^{\text{comp}} = \argmax_{X \sim \pi} \mathbb{E} [\langle X, Y_s \rangle],
\end{equation}
where $Y_s$ is prescribed. We remark that defining \eqref{eq:X comp def} selects an optimal transport coupling since 
\begin{align*}
    \sup_{X \sim \pi}\mathbb{E}[\langle X, Y_s\rangle] = C_s - \frac{1}{2}\mathcal{W}_2^2(\pi, \operatorname{Law}(Y_s)),
\end{align*}
where $C_s \in \mathbb{R}$ only depends on the fixed marginal laws of $X$ and $Y_s$.
\end{itemize}
The above considerations are formalised as follows:
\begin{definition}[Regrets for the convex setting]
\label{def:regrets}
For played actions $\{(X_s,Y_s)\}_{\tau \leq s\leq t}$ and abbreviating $\rho_t = \operatorname{Law}(\bar{X}_t)$, we define the regrets at time $t\geq\tau$ as 
\begin{subequations}
\label{eq:X regret}
\begin{align}   
\label{eq:X regret main}
\Reg_t^{X}  = & \int_{\tau}^t \mathbb{E}\left[\langle X_s,Y_s\rangle \right] \alpha_s \dd s + A_t\operatorname{Ent}\left(\rho_t\right) -\int_\tau^t\sup_{X \sim \pi}\mathbb{E}\left[\langle X,Y_s\rangle\right] \alpha_s \dd s - A_t\operatorname{Ent}\left(\pi\right)
\\
\label{eq:RegX offset}
& + A_\tau (\Ent(\pi) - \Ent(\rho_\tau))
\end{align}
\label{conv:x_regret}
\end{subequations}
and
\begin{subequations}
\label{eq:Y regret}
\begin{align}
\label{eq:Y player loss}
\Reg_t^{Y} & = A_\tau(\mathbb{E}\left[V^*(Y_\tau)-\langle \bar{X}_\tau,Y_\tau\rangle\right]) + \int_\tau^t\mathbb{E}\left[V^*(Y_s)-\langle X_s,Y_s\rangle \right]\, \alpha_s \dd s \\ & 
\label{conv:y_regret}
\;\;\;\ - \inf_Y \left \{ A_t \mathbb{E} \left[ V^*(Y) - \langle \bar{X}_t,Y \rangle   \right]\right\}. \end{align} \end{subequations}
\end{definition}

\begin{remark}
The X-player's regret in \eqref{eq:X regret main} follows the structure \eqref{eq:X regret abstract}, while the term in 
\eqref{eq:RegX offset} is a constant, included in order to ensure that $\Reg_\tau^X  = 0$. The Y-player's regret \eqref{eq:Y regret} includes an $A_\tau$-correction as suggested by Definition \ref{def:weights} ii). 
\end{remark}
\begin{remark}[Entropic games]
Definition \ref{def:regrets} establishes the rules of the game: Up to $A_\tau$-corrections, the players suffer instantaneous losses $\langle X_s, Y_s \rangle$ and $V^*(Y_s) - \langle X_s, Y_s \rangle$, appropriately weighted and in expectation. This structure is familiar from online learning, see, for example, \cite{kwon2014continuous} for a continuous-time formulation. The entropic term in \eqref{eq:X regret} is nonstandard: \emph{after the end of the game}, at time $t$, the X-player pays an additional penalty associated with her average action $\bar{X}_t$, encouraging randomised strategies. We believe that games of this exact type have not been studied, but note that exponential weight algorithms \cite{hoeven2018many} share certain similarities.
\end{remark}

Based on Definition \ref{def:regrets}, we have the following central regret-to-suboptimality reduction:
\begin{proposition}\label{theorem:sub-optimality}
Denoting $\rho_t = \Law(\bar{X}_t)$, the regrets from Definition \ref{def:regrets} satisfy 
\begin{align*}
   \KL(\rho_t \| \pi) =  \mathcal{F}(\rho_t) -  \mathcal{F}(\pi) \leq \frac{\Reg_t^{X} + \Reg_t^{Y}}{A_t} + \frac{A_{\tau}}{A_t}
\left(\mathcal{F}(\rho_\tau) -  \mathcal{F}(\pi) \right), \qquad  t \geq \tau > 0.
\end{align*}
\end{proposition}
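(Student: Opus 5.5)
Summing the two regrets, the bilinear terms $\int_\tau^t \mathbb{E}\langle X_s,Y_s\rangle\alpha_s\dd s$ cancel. The plan is to rewrite what remains as $A_t$ times the free-energy gap, plus terms of definite sign. Concretely, $\Reg_t^X+\Reg_t^Y$ equals
\begin{align*}
& A_t\bigl(\Ent(\rho_t)-\Ent(\pi)\bigr) + A_\tau\bigl(\Ent(\pi)-\Ent(\rho_\tau)\bigr) - \int_\tau^t \sup_{X\sim\pi}\mathbb{E}\langle X,Y_s\rangle\,\alpha_s\dd s \\
& \quad + A_\tau\mathbb{E}\bigl[V^*(Y_\tau)-\langle \bar X_\tau,Y_\tau\rangle\bigr] + \int_\tau^t \mathbb{E}[V^*(Y_s)]\,\alpha_s\dd s - \inf_Y A_t\mathbb{E}\bigl[V^*(Y)-\langle\bar X_t,Y\rangle\bigr].
\end{align*}

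\textbf{The infimum term.} By (F2) and (F4), applied pointwise, the infimum is attained at $Y=\nabla V(\bar X_t)$. Its value is $-A_t\mathbb{E}[V(\bar X_t)]$, so $-\inf\{\cdots\}=A_t\int V\dd\rho_t$. Combined with $A_t\Ent(\rho_t)$, this gives $A_t\mathcal{F}(\rho_t)$.

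\textbf{The remaining $V^*$ and supremum terms.} These must be bounded below by $-A_t\int V\dd\pi$ plus a correction at $\tau$. For each $s$, take $X^{\rm comp}_s\sim\pi$ attaining the supremum (the optimal transport coupling with $Y_s$). Fenchel–Young (F1) gives
\[
\mathbb{E}[V^*(Y_s)]-\mathbb{E}\langle X^{\rm comp}_s,Y_s\rangle\ge -\mathbb{E}[V(X^{\rm comp}_s)]=-\int V\dd\pi .
\]
Integrating against $\alpha_s$ yields $-(A_t-A_\tau)\int V\dd\pi$. Combining this with $-(A_t-A_\tau)\Ent(\pi)$, extracted from the entropy terms as $-A_t\Ent(\pi)+A_\tau\Ent(\pi)$, produces $-(A_t-A_\tau)\mathcal{F}(\pi)$.

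\textbf{The leftover $\tau$-terms.} What remains is $A_\tau\bigl(\mathbb{E}[V^*(Y_\tau)-\langle\bar X_\tau,Y_\tau\rangle]-\Ent(\rho_\tau)\bigr)$. By (F1) applied at $\bar X_\tau$, this is at least $-A_\tau\bigl(\mathbb{E}V(\bar X_\tau)+\Ent(\rho_\tau)\bigr)=-A_\tau\mathcal{F}(\rho_\tau)$. This holds whatever $Y_\tau$ is, and equality holds when $Y_\tau=\nabla V(\bar X_\tau)$, the FTL choice.

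\textbf{Conclusion.} Collecting the three pieces,
\[
\Reg_t^X+\Reg_t^Y\ge A_t\mathcal{F}(\rho_t)-(A_t-A_\tau)\mathcal{F}(\pi)-A_\tau\mathcal{F}(\rho_\tau).
\]
Rearranging gives
\[
A_t\bigl(\mathcal{F}(\rho_t)-\mathcal{F}(\pi)\bigr)\le \Reg_t^X+\Reg_t^Y+A_\tau\bigl(\mathcal{F}(\rho_\tau)-\mathcal{F}(\pi)\bigr).
\]
Dividing by $A_t>0$ gives the claim, and $\KL(\rho_t\|\pi)=\mathcal{F}(\rho_t)-\mathcal{F}(\pi)$ by \eqref{eq:free energy}.

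\textbf{Main obstacle.} The hard part is bookkeeping rather than any deep estimate: tracking which $A_\tau$-terms pair with which, and checking that each application of (F1) goes in the right direction. A secondary issue is justifying that the supremum over couplings is attained, or else using an $\varepsilon$-optimal coupling and letting $\varepsilon\to0$. One must also check that all expectations are finite, which follows from the integrability in Assumption \ref{ass:FP-regularity}.
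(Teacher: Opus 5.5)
Your proof is correct and follows the paper's argument essentially line by line. You cancel the bilinear terms, evaluate the infimum via (F2) as $-A_t\mathbb{E}[V(\bar X_t)]$, bound the $\tau$-term and the running $V^*$/supremum terms by Fenchel--Young, and regroup everything into free energies.

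Your worry about whether $\sup_{X\sim\pi}$ is attained is unnecessary. The inequality $\mathbb{E}[\langle X,Y_s\rangle - V^*(Y_s)]\le \int V\dd\pi$ holds for every $X\sim\pi$, so you can take the supremum directly in it, exactly as the paper does, without needing an optimal coupling or an $\varepsilon$-argument.
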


\begin{proof}[Proof of Proposition \ref{theorem:sub-optimality}]
The proof follows the lines of \cite[Theorem 2]{fenchelgames}. We start by adding the regrets:
\begin{align*}
    \Reg_t^{X} + \Reg_t^{Y}
    & = -\int_\tau^t \sup_{X \sim \pi}\mathbb{E}\left[\langle X,Y_s\rangle\right]\alpha_s \dd s
    + A_\tau(\mathbb{E}\left[V^*(Y_\tau)-\langle \bar{X}_{\tau}, Y_{\tau}\rangle\right])
    + \int_\tau^t \mathbb{E}\left[V^*(Y_s)\right]\alpha_s \dd s \\
    & \;\;\;- \inf_Y \left \{ A_t \mathbb{E} \left[ V^*(Y) - \langle \bar{X}_t,Y \rangle   \right]\right\} + A_t(\Ent\left(\rho_t\right) - \Ent\left(\pi\right)) -  A_\tau (\Ent(\rho_\tau) - \Ent(\pi)).
\end{align*}
We now reformulate and estimate the appearing terms. First, notice that, by \eqref{eq:biconjugate_id}, the infimum over $Y$ can be rewritten as
\begin{align*}
    \inf_Y \left \{ A_t \mathbb{E} \left[ V^*(Y) - \langle \bar{X}_t,Y \rangle   \right]\right\}
    &= -A_t\sup_Y\left\{\mathbb{E}\left[\langle \bar{X}_t,Y\rangle-V^*(Y)\right]\right\} = -A_t\mathbb{E}\left[V(\bar{X}_t)\right].
\end{align*}
Moreover, by the Fenchel--Young inequality \eqref{eq:Fenchel IE},
\begin{align*}
A_\tau(\mathbb{E}\left[V^*(Y_\tau)-\langle \bar{X}_{\tau},Y_{\tau}\rangle\right])
    \geq -A_\tau\mathbb{E}\left[V(\bar{X}_{\tau})\right].
\end{align*}
Using again \eqref{eq:biconjugate_id}, it holds that
\begin{align*}
    \langle X,Y_s\rangle - V^*(Y_s)
    \leq \sup_Y \left\{\langle X,Y\rangle - V^*(Y)\right\}
    = V(X).
\end{align*}
Taking expectations, taking the supremum over $X \sim \pi$, and integrating in time gives
\begin{align*}
    \int_\tau^t\sup_{X \sim \pi}\mathbb{E}\left[\langle X,Y_s\rangle\right]\alpha_s \dd s
    - \int_\tau^t\mathbb{E}\left[V^*(Y_s)\right]\alpha_s \dd s
    \leq \int_\tau^t\mathbb{E}_{X \sim \pi}\left[V(X)\right]\alpha_s \dd s
    = \left(A_t-A_{\tau}\right)\mathbb{E}_{X \sim \pi}\left[V(X)\right].
\end{align*}
In total, we get the estimate 
\begin{align*}
    \Reg_t^{X} + \Reg_t^{Y}
    & \geq  A_t \mathbb{E}[V(\bar{X}_t)] - A_{\tau} \mathbb{E}[V(\bar{X}_{\tau})] + A_{\tau}\mathbb{E}_{X \sim \pi}[V(X)]-A_t \mathbb{E}_{X \sim \pi}[V(X)] \\
    & \;\;\; + A_t(\Ent\left(\rho_t\right) - \Ent\left(\pi\right)) -  A_\tau (\Ent(\rho_\tau) - \Ent(\pi)) \\ 
    & = A_t\left(\mathcal{F}(\rho_t) - \mathcal{F}(\pi) \right)  - A_\tau \left(\mathcal{F}(\rho_{\tau}) - \mathcal{F}(\pi)\right)  = A_t \KL(\rho_t \| \pi) - A_\tau \left(\mathcal{F}(\rho_{\tau}) - \mathcal{F}(\pi)\right).
\end{align*}
Rearranging yields the statement. 
\end{proof}

\textbf{From Fenchel games to underdamped Langevin dynamics.} Sections \ref{sec:FTL convex} and \ref{convex:noisy} will establish bounds on $\Reg_t^X$ and $\Reg_t^Y$ (associated with the X and Y-players' strategies) that will imply Theorem \ref{theorem:UL_convex} on the basis of Proposition \ref{theorem:sub-optimality}. Here, we already give an overview that details the connections of these strategies to underdamped Langevin dynamics \eqref{eq:UL}. The Y-player will use the strategy
\begin{equation}
\label{eq:Yt XY section}
Y_t = \nabla V(\bar{X}_t),\quad\text{where }\bar{X}_t = \frac{A_{\tau}}{A_t}\bar{X}_{\tau} + \frac{1}{A_t}\int_{\tau}^t X_s\alpha_s\dd s,
\end{equation}
analysed in Section \ref{sec:FTL convex}, while
the X-player will update her moves according to 
\begin{equation}
\label{convex_game}
\dd X_t  = - \alpha_t Y_t \dd t + \sqrt{3\alpha_t}\dd  W_t,
\end{equation}
analysed in Section \ref{convex:noisy}. These coupled dynamics can generally be related to \eqref{eq:UL} via the change of variables seen in \eqref{eq:QP-X}:
\begin{align}
\label{eq:QP-X_c}
Q_t = \bar{X}_t, \qquad P_t = \frac{\alpha_t}{A_t} (X_t - \bar{X}_t), \quad t \geq \tau. 
\end{align}
In particular note that differentiating $\bar{X}_t$ indeed yields
\begin{equation}
\label{eq:average evolution}
\dd \bar{X}_t = \frac{\alpha_t}{A_t}(X_t - \bar{X}_t) \dd t = P_t\dd t.
\end{equation}
Now, choosing the specific \emph{Nesterov schedule}
\begin{align}\label{Nesterov-Schedule}
    A_t = \frac{t^2}{4},\;\;\; \alpha_t = \frac{t}{2},
\end{align}
and differentiating $P_t$,  establishes the correspondence between \eqref{eq:UL} for $\gamma_t = \tfrac{3}{t}$ and \eqref{eq:Yt XY section}-\eqref{convex_game}. A reasoning of the explicit weighting choice \eqref{Nesterov-Schedule} is given in Remark \ref{rem:schedule}.

\subsection{Regret of the Y-player}
\label{sec:FTL convex}

By the properties of the Fenchel conjugate (see Section \ref{sec:preliminaries}), the instantaneous loss for the Y-player can be optimised in closed form, see Remark \ref{remark: details_y} below, leading to zero regret:

\begin{proposition}[Regret bound for the Y-player]
\label{y_player}
Fix $t \ge \tau$. The strategy
\begin{align}
\label{FTL}
    Y_s = \nabla V(\bar{X}_{s}),\qquad s \in [\tau,t],
\end{align}
leads to zero regret of the Y-player:
\begin{align}\label{FTL:no_regret}
\Reg_t^{Y} = 0.
\end{align} 
\end{proposition}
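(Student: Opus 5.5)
The plan is a continuous-time ``be-the-leader'' telescoping argument carried out pathwise. Three ingredients are needed: the Fenchel--Young equality (F3), the identity (F2) for the infimum term, and the ODE \eqref{eq:average evolution} satisfied by $\bar{X}_s$.

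First I rewrite each term of $\Reg_t^Y$ in Definition \ref{def:regrets} using $Y_s=\nabla V(\bar X_s)$. By (F3), $V^*(Y_s)=\langle \bar X_s, Y_s\rangle - V(\bar X_s)$, which is finite for every $s$. Hence the initial term equals $A_\tau\bigl(\mathbb{E}[V^*(Y_\tau)-\langle\bar X_\tau,Y_\tau\rangle]\bigr) = -A_\tau\mathbb{E}[V(\bar X_\tau)]$. As in the proof of Proposition \ref{theorem:sub-optimality}, (F2) gives
\[
-\inf_Y\bigl\{A_t\mathbb{E}[V^*(Y)-\langle\bar X_t,Y\rangle]\bigr\}=A_t\mathbb{E}[V(\bar X_t)].
\]
The claim therefore reduces to the identity
\begin{equation*}
\int_\tau^t \mathbb{E}\bigl[V^*(Y_s)-\langle X_s,Y_s\rangle\bigr]\alpha_s\dd s = A_\tau\mathbb{E}[V(\bar X_\tau)] - A_t\mathbb{E}[V(\bar X_t)].
\end{equation*}

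Next I prove this pathwise by showing that the integrand is the exact time derivative of $\Psi_s:=-A_s V(\bar X_s)$. The paths of $X$ are continuous, being solutions of \eqref{convex_game}. So $\bar X_s$ is $C^1$ in $s$, with $A_s\,\tfrac{\dd}{\dd s}\bar X_s=\alpha_s(X_s-\bar X_s)$ by \eqref{eq:A def complicated}--\eqref{eq:X bar complicated}. Since $V\in C^1$, the chain rule together with $\dot A_s=\alpha_s$ gives
\begin{align*}
\tfrac{\dd}{\dd s}\Psi_s &= -\alpha_s V(\bar X_s) - \alpha_s\langle \nabla V(\bar X_s), X_s-\bar X_s\rangle \\
&= \alpha_s\bigl(\langle\bar X_s,Y_s\rangle - V(\bar X_s)\bigr) - \alpha_s\langle X_s,Y_s\rangle = \alpha_s\bigl(V^*(Y_s)-\langle X_s,Y_s\rangle\bigr),
\end{align*}
where the last equality uses (F3) once more. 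Integrating from $\tau$ to $t$ gives the identity pathwise. Taking expectations and adding the three contributions then yields $\Reg_t^Y=0$.

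The only delicate step is exchanging expectation and time integration, together with the finiteness of all expectations involved. For this I would use Fubini, supported by the integrability in Assumption \ref{ass:FP-regularity}. Recall that $\bar X_s=Q_s$ and that $X_s=\bar X_s+\tfrac{A_s}{\alpha_s}P_s$ by \eqref{eq:QP-X_c}. Then \eqref{eq:FKP-integrability1} gives uniform-in-time second moments of $(\bar X_s, X_s)$, and \eqref{eq:FkP-integrability2} gives $\sup_s\mathbb{E}\|\nabla V(\bar X_s)\|^2<\infty$. Together these make $\mathbb{E}|\langle X_s,Y_s\rangle|$ bounded on $[\tau,t]$. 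By convexity, $|V(\bar X_s)|\le |V(0)|+\|\nabla V(\bar X_s)\|\,\|\bar X_s\|$, so $\mathbb{E}|V(\bar X_s)|$ is controlled in the same way. This settles the required integrability, and the argument is otherwise routine.
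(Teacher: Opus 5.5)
Your proposal is correct and is essentially the paper's proof: you use (F3) for the initial term and the Fenchel--Young identity for the infimum term, which reduces the claim to showing that $\alpha_s\bigl(V^*(Y_s)-\langle X_s,Y_s\rangle\bigr)$ is the exact pathwise derivative of $-A_sV(\bar X_s)$; you then integrate and take expectations. One small slip is in your integrability remark, which the paper omits altogether: convexity gives $V(x)\le V(0)+\|\nabla V(x)\|\,\|x\|$ only as an upper bound, whereas the lower bound comes from $V(x)\ge V(0)+\langle\nabla V(0),x\rangle$, so the correct estimate is $|V(x)|\le |V(0)|+\bigl(\|\nabla V(x)\|+\|\nabla V(0)\|\bigr)\|x\|$, which still gives the control you need.
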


\begin{remark}\label{remark: details_y}
The strategy \eqref{FTL} arises naturally as an optimal response (or follow-the-leader, e.g. cp. \cite[Algorithm 5]{fenchelgames}) strategy associated with the loss in \eqref{eq:Y player loss} up to time $s$: 
\begin{equation}
Y_s = 
\argmin_Y \left\{A_\tau\left(\mathbb{E}\left[V^*(Y)-\langle \bar{X}_\tau,Y\rangle\right]\right) + \int_\tau^s\mathbb{E}\left[V^*(Y)-\langle X_u,Y\rangle \right]\alpha_s \dd u \right\}.
\end{equation}
Notice the difference to the competitor $Y^{\text{comp}}_t$ in \eqref{eq:Y comp def}, which is chosen once for the terminal horizon $t$ and held constant throughout the rounds.
\end{remark}

\begin{proof}
The proof follows \cite[Theorem 4.1]{kwon2014continuous} closely, as our stochastic setting poses no essential difficulties: all arguments can be performed pathwise.
We first compute the accumulated loss of the competitor in \eqref{conv:y_regret}:
By the Fenchel--Young inequality \eqref{eq:Fenchel IE}, for every random variable $Y$, we have that  
$V^*(Y)-\langle \bar{X}_t,Y\rangle
    \geq -V(\bar{X}_t)$,
with equality for $Y=\nabla V(\bar{X}_t)$, see \eqref{FY_Eq}. Hence
\begin{align}\label{eq:Y_comp_cost}
    \inf_Y \left\{
    A_t\mathbb{E}\left[V^*(Y)-\langle \bar{X}_t,Y\rangle\right]
    \right\}
    =
    -A_t\mathbb{E}\left[V(\bar{X}_t)\right].
\end{align}
It therefore remains to show that the accumulated loss \eqref{eq:Y player loss} of the strategy
\eqref{FTL} equals the right-hand side of \eqref{eq:Y_comp_cost}.
Again, by \eqref{FY_Eq}, we have that $
    V^*(Y_s)-\langle X_s,Y_s\rangle
    =
    -V(\bar{X}_s)
    +\langle \bar{X}_s-X_s,\nabla V(\bar{X}_s)\rangle$.
In combination with \eqref{eq:average evolution}, we see that 
\begin{align*}
    \frac{\dd}{\dd s}\left(A_sV(\bar{X}_s)\right)
    &=
    \alpha_s V(\bar{X}_s)
    +A_s\left\langle
        \nabla V(\bar{X}_s),
        \frac{\dd}{\dd s}\bar{X}_s
    \right\rangle 
    \\
    & =
    \alpha_s\left(
        V(\bar{X}_s)
        +\langle \nabla V(\bar{X}_s),X_s-\bar{X}_s\rangle
    \right) =
    -\alpha_s\left(
        V^*(Y_s)-\langle X_s,Y_s\rangle
    \right).
\end{align*}
Integration from $\tau$ to $t$
and taking expectations yields
\begin{equation}
\label{eq:running cost Y}
    \int_\tau^t
    \mathbb{E}\left[
        V^*(Y_s)-\langle X_s,Y_s\rangle
    \right]
    \alpha_s \dd s
    =
    A_\tau\mathbb{E}\left[V(\bar{X}_\tau)\right]
    -
    A_t\mathbb{E}\left[V(\bar{X}_t)\right].
\end{equation}
By \eqref{FY_Eq} and
$Y_\tau=\nabla V(\bar{X}_\tau)$, the $A_\tau$-contribution is $
    A_\tau\mathbb{E}\left[
        V^*(Y_\tau)-\langle\bar{X}_\tau,Y_\tau\rangle
    \right]
    =
    -A_\tau\mathbb{E}\left[V(\bar{X}_\tau)\right]$.
Combining this with \eqref{eq:running cost Y} gives
\begin{align*}
    &A_\tau\mathbb{E}\left[
        V^*(Y_\tau)-\langle\bar{X}_\tau,Y_\tau\rangle
    \right]
    +
    \int_\tau^t
    \mathbb{E}\left[
        V^*(Y_s)-\langle X_s,Y_s\rangle
    \right]\alpha_s \dd s =
    -A_t\mathbb{E}\left[V(\bar{X}_t)\right],
\end{align*}
implying the claim.
\end{proof}

\subsection{Regret of the X-player}\label{convex:noisy}
 
We now estimate the regret of the X-player's strategy, which (in accordance with \eqref{convex_game}) is given by
\begin{align}\label{noisy}
\dd X_t & = -\alpha_tY_t \dd t + \sqrt{3\alpha_t} \dd W_t,\qquad t \geq \tau >0, 
\end{align}
with an $\mathbb{R}^d$-valued initial random variable $X_{\tau}$ and where $Y_t$ is assumed to be $\sigma(\bar{X}_t)$-measurable (as is the case for \eqref{eq:Yt XY section}). We will show the following:

\begin{proposition}[Regret bound for the X-player]\label{prop:xplayer_convex} 
Fix $t \geq \tau > 0$. Denote by $r_s:\mathbb{R}^d\rightarrow \mathbb{R}^d$ the optimal transport map from $\operatorname{Law}(\bar{X}_s)$ to $\pi$, $s \in [\tau,t]$. Then, under Assumption \ref{ass:FP-regularity} (and imposing the correspondence \eqref{eq:QP-X_c}), the solution to \eqref{noisy} satisfies  
\begin{equation}
\label{eq:Reg X bound}
     \Reg_t^{X}\leq A_\tau\mathbb{E}_{\bar{X}_\tau}[\operatorname{KL}(\operatorname{Law} (X_{\tau}\|\bar{X}_\tau)\,|\,\mathcal{N}(r_\tau(\bar{X}_\tau), A_\tau I_d)].
\end{equation}
\end{proposition}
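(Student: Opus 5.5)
The plan is a Lyapunov (potential-function) argument. I introduce
\[
\Phi_t := A_t\,\mathbb{E}_{\bar{X}_t}\bigl[\operatorname{KL}(\operatorname{Law}(X_t\,|\,\bar{X}_t)\,\|\,\mathcal{N}(r_t(\bar{X}_t),A_tI_d))\bigr]\;\ge 0
\]
and show that $t\mapsto \Reg_t^X+\Phi_t$ is non-increasing on $[\tau,\infty)$. Since $\Reg_\tau^X=0$ by the offset term \eqref{eq:RegX offset}, this gives $\Reg_t^X\le \Phi_\tau-\Phi_t\le\Phi_\tau$, which is exactly \eqref{eq:Reg X bound}.

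First I simplify the regret. For the upper bound I only need $\sup_{X\sim\pi}\mathbb{E}[\langle X,Y_s\rangle]\ge \mathbb{E}[\langle r_s(\bar X_s),Y_s\rangle]$, which holds because $r_s(\bar X_s)\sim\pi$ is an admissible competitor. This yields
\[
\Reg_t^X\le \int_\tau^t \mathbb{E}\bigl[\langle X_s-r_s(\bar X_s),Y_s\rangle\bigr]\alpha_s\dd s + A_t\Ent(\rho_t)-A_t\Ent(\pi)+A_\tau(\Ent(\pi)-\Ent(\rho_\tau)).
\]
Next I expand $\Phi_t$ as
\[
\Phi_t=A_t\Bigl(\Ent(\operatorname{Law}(\bar X_t,X_t))-\Ent(\rho_t)+\tfrac{1}{2A_t}\mathbb{E}\|X_t-r_t(\bar X_t)\|^2+\tfrac d2\ln(2\pi A_t)\Bigr).
\]
Via \eqref{eq:QP-X_c} the pair $(\bar X,X)$ is a linear, time-dependent image of $(Q,P)$, so $\Phi_t$ can equivalently be written through $\operatorname{KL}(\operatorname{Law}(Q_t,P_t)\|\Pi_t)-\operatorname{KL}(\rho_t\|\pi)$ up to Jacobian constants. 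In this form the Fokker--Planck equation \eqref{eq:FKP} and the integrability in Assumption \ref{ass:FP-regularity} i) justify differentiating the joint entropy.

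Then I differentiate each piece. The $-A_t\Ent(\rho_t)$ inside $\Phi_t$ cancels the $A_t\Ent(\rho_t)$ coming from the regret, up to the $\alpha_t\Ent(\rho_t)$ terms. The Itô drift $-\alpha_tY_t\,\dd t$ in \eqref{noisy} acting on $\tfrac12\mathbb{E}\|X_t-r_t(\bar X_t)\|^2$ produces $-\alpha_t\mathbb{E}\langle X_t-r_t(\bar X_t),Y_t\rangle$, which cancels the regret integrand exactly. The noise $\sqrt{3\alpha_t}\dd W_t$ does two things. Its Itô correction contributes $\tfrac{3\alpha_t d}{2}$. Its action on the conditional entropy of $X_t\,|\,\bar X_t$ contributes a negative conditional Fisher information of size $\tfrac{3\alpha_t}{2}$. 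Completing the square between this Fisher term and the quadratic term should absorb the Itô correction together with the $\tfrac d2\alpha_t/A_t$ arising from $\frac{\dd}{\dd t}\ln A_t$. The $3$ in the noise and the relation between $\alpha$ and $A$ are where I expect the constants to close, and I would check this first for Gaussian data.

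The transport of $\bar X$ with velocity $P_t=\frac{\alpha_t}{A_t}(X_t-\bar X_t)$ generates two further terms. One is the time-derivative of the marginal entropy, $-A_t\mathbb{E}\langle\nabla\ln\rho_t(\bar X_t),P_t\rangle$. The other is the term $\mathbb{E}\langle X_t-r_t(\bar X_t),\nabla r_t(\bar X_t)P_t+\partial_tr_t\rangle$. I would pair the first with the displacement convexity ("above-tangent") inequality for entropy,
\[
\Ent(\pi)\ge\Ent(\rho_t)+\mathbb{E}\bigl\langle\nabla\ln\rho_t(\bar X_t),\,r_t(\bar X_t)-\bar X_t\bigr\rangle,
\]
applied with weight $\alpha_t$. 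This handles $-\alpha_t(\Ent(\pi)-\Ent(\rho_t))$ and links $X_t-\bar X_t$ to $r_t(\bar X_t)-\bar X_t$.

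The main obstacle is the time-derivative of the optimal transport maps $s\mapsto r_s$, which need not be differentiable. Here I would invoke Lemma \ref{lemma:L2-distance_convex}, which uses the characteristic flow $\Phi_{t,s}$ and the acceleration bound of Assumption \ref{ass:FP-regularity} ii). The aim is to replace $\frac{\dd}{\dd t}\tfrac12\mathbb{E}\|X_t-r_t(\bar X_t)\|^2$ by an inequality in which $r$ is differentiated only along the flow. Monotonicity of $\nabla r_t$ (as the gradient of a convex potential) should then give the sign needed to discard the leftover $\langle X-r,\nabla r\,P\rangle$-type term.
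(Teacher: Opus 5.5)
Your skeleton matches the paper's. $\Phi_t$ is exactly $\Psi_t$, and the coupling bound via $r_s(\bar X_s)\sim\pi$ and Lemma~\ref{lemma:L2-distance_convex} are used the same way. However, the step that actually produces the sign is missing, and the substitutes you propose do not work.

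First, your bookkeeping of the $\bar X$-transport is off. $\Phi_t$ contains $-A_t\Ent(\rho_t)$ and $\Reg_t^X$ contains $+A_t\Ent(\rho_t)$, so the marginal entropy cancels identically. No term $-A_t\mathbb{E}\langle\nabla\ln\rho_t(\bar X_t),P_t\rangle$ survives to be paired with the above-tangent inequality. The transport only contributes the divergence term $\alpha_t d$ to $\frac{\dd}{\dd t}\bigl(A_t\Ent(\Law(\bar X_t,X_t))\bigr)$. Likewise, $\mathbb{E}\langle X-r,\nabla r\,P\rangle$ has no sign by itself. Only its combination with the flow derivative of $\widetilde r$ gives the term $-\frac{1}{A_u}\mathbb{E}[C^X_u(\bar X_u):\nabla r_u(\bar X_u)]\le 0$ of Lemma~\ref{lemma:L2-distance_convex}.

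After all cancellations, set $P_u=(X_u-\bar X_u)/\sqrt{A_u}$, $\nu_{\bar x}=\Law(P_u|\bar X_u=\bar x)$ and $\mathcal N=\mathcal N(0,I_d)$. You must show
\[
\Ent(\rho_u)-\Ent(\pi)+\mathbb{E}\Bigl[\KL(\nu_{\bar X_u}\|\mathcal N)-\tfrac32\,\mathcal I(\nu_{\bar X_u}\|\mathcal N)+\mathbb{E}\bigl[\|P_u\|^2\,\big|\,\bar X_u\bigr]-C^P_u(\bar X_u):\nabla r_u(\bar X_u)\Bigr]\le 0 .
\]
The noise terms do not close on their own. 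By the Gaussian log-Sobolev inequality the $\nu$-terms are at most $2d-J(\nu_{\bar X_u})$, which equals $d$ when $\nu_{\bar x}=\mathcal N$.

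Hence the covariance term cannot be discarded, even though it is nonpositive. At $\rho_u=\pi$, $\nu_{\bar x}=\mathcal N$, everything else sums to $+d$, and $-C^P_u:\nabla r_u=-d$ is exactly what cancels it.

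Nor does the above-tangent inequality suffice if you keep that term. It only gives $\Ent(\rho)-\Ent(\pi)\le\mathbb{E}[\Tr\nabla r-d]$. Take Gaussian $\rho_u,\pi$ with $\nabla r_u\equiv aI_d$ and $\nu_{\bar x}=\mathcal N(0,2I_d)$; this is an admissible state, e.g.\ at $u=\tau$. The resulting bound on the left-hand side above is $d(\tfrac34-\tfrac12\ln 2-a)>0$ for small $a$, although the true value $d(\ln a+\tfrac74-\tfrac12\ln 2-2a)$ is negative.

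The missing ingredient is the entropy--transport Cram\'er--Rao inequality, Lemma~\ref{lemma:ent-cramer-rao}:
\[
\Ent(\rho)-\Ent(\pi)+2d\le\int\bigl(J(\nu_{\bar x})+C(\nu_{\bar x}):\nabla r(\bar x)\bigr)\,\rho(\dd\bar x).
\]
It combines four facts:
\begin{itemize}
\item the exact identity $\Ent(\rho)-\Ent(\pi)=\int\ln\det\nabla r\,\dd\rho$;
\item the Cram\'er--Rao bound $J\ge\Tr(C^{-1})$;
\item the inequality $\Tr(CA)+\Tr(C^{-1})\ge 2\Tr(A^{1/2})$;
\item the sharp estimate $\ln\det A\le 2\Tr(A^{1/2})-2d$, which is strictly stronger than the tangent bound $\ln\det A\le\Tr A-d$.
\end{itemize}
Applied after the log-Sobolev step, it consumes precisely the covariance term you intended to drop and shows the integrand is nonpositive. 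That is what makes $\Phi_t+\Reg_t^X$ non-increasing.
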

\begin{remark}\label{remark:choice_Psit}
The proof of Proposition \ref{prop:xplayer_convex} proceeds by showing that
\begin{equation}\label{eq:def_Psi_t}
t \mapsto \underbrace{A_t\mathbb{E}_{\bar{X}_t}[\operatorname{KL}(\operatorname{Law} (X_{t}|\bar{X}_t)\,\|\,\mathcal{N}(r_t(\bar{X}_t), A_t I_d)]}_{=: \Psi_t} + \Reg_t^X
\end{equation}
is non-increasing, where we have defined the energy function $\Psi_t$. For intuition, consider Proposition \ref{prop:x-player_N} from the Appendix, which states the regret bound for the X-player's strategy in the (deterministic) convex optimisation game. There, the strategy is given by 
\begin{align*}
    \dot{x}_t = - \alpha_t y_t,
\end{align*}
thus by a deterministic analogue of the stochastic strategy in \eqref{noisy}. The corresponding energy function is given by 
\begin{align}\label{eq:Lyapunov}
    \psi_t = \frac{1}{2}\|x_t - x^*\|^2,
\end{align}
tracking the distance of $x_t$ to the minimiser $x^* \in \text{argmin}_{x \in \mathbb{R}^d} f(x)$. The energy function $\Psi_t$ \eqref{eq:def_Psi_t} extends this ansatz in terms of probability laws: it compares the law of $X_t$ given $\bar{X}_t$ to the time-dependent reference measure $\mathcal{N}(r_t(\bar{X}_t), A_t I_d)$. Here, $r_t:\mathbb{R}^d \rightarrow \mathbb{R}^d$ denotes the Wasserstein-2 optimal transport map (also called Brenier map in the following) from $\operatorname{Law}(\bar{X}_t)$ to $\pi$. We remark that writing out $\Psi_t$ as 
\begin{align}\label{eq:motivated_ef}
A_t\mathbb{E}_{\bar{X}_t}[\operatorname{KL}(\operatorname{Law} (X_t|\bar{X}_t)\,\|\,\mathcal{N}(r_t(\bar{X}_t), A_t I_d)] = A_t\mathbb{E}_{\bar{X}_t}[\operatorname{Ent}(X_t|\bar{X}_t)] + \frac{1}{2}\mathbb{E}[\|X_t - r_t(\bar{X}_t)\|^2] + \frac{dA_t}{2}\ln\big({2\pi A_t}\big),
\end{align}
makes the analogy with \eqref{eq:Lyapunov} and with other work as discussed in Remark \ref{rem:previous work technical} (b) more explicit.
\end{remark}

Before proving Proposition \ref{prop:xplayer_convex}, we state two supporting lemmas. The following first lemma provides an estimate for the time evolution of the second term on the right-hand side of \eqref{eq:motivated_ef}.
\begin{lemma}\label{lemma:L2-distance_convex}
Denote by $r_t:\mathbb{R}^d\rightarrow \mathbb{R}^d$ the optimal transport map from $\operatorname{Law }(\bar{X}_t)$ to $\pi$. Along the solution to \eqref{noisy}, we have
\begin{equation}
\label{eq:integrated-main}
\begin{aligned}
&\frac12\E\big[\|X_t-r_t(\bar X_t)\|^2\big]
 -\frac12\E\big[\|X_s-r_s(\bar X_s)\|^2\big]
\\
&\quad\leq
\int_s^t\Bigg[
 -\E \langle X_u-r_u(\bar X_u),Y_u \rangle
 -\frac{1}{A_u}
   \E\bigl[C_u^X(\bar X_u):\nabla r_u(\bar X_u)\bigr]
 +\frac32 d
\Bigg]\alpha_u\dd u,
\end{aligned}
\end{equation}
where $C^X_t(\bar{x})$ denotes the conditional covariance matrix of $X_t$ given $\bar{X}_t = \bar{x}$,
\begin{align*}
    C^X_t(\bar{x}) = \mathbb{E}\big[(X_t-\mathbb{E}[X_t|\bar{X}_t]) \otimes (X_t-\mathbb{E}[X_t|\bar{X}_t]) | \bar{X}_t = \bar{x}\big].
\end{align*}
\end{lemma}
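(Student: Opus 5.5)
We plan to differentiate $u \mapsto \frac12\E\|X_u - r_u(\bar X_u)\|^2$ in time and identify three contributions: the motion of $X_u$, the motion of $\bar X_u$ through $r_u$, and the motion of the map $r_u$ itself. The first two can be computed by Itô's formula. The third is controlled through a convexity (displacement) argument for optimal transport maps, which produces an inequality rather than an equality.

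\textbf{Itô part.} Since $\dd \bar X_u = P_u\dd u$ with $P_u = \frac{\alpha_u}{A_u}(X_u-\bar X_u)$, the process $\bar X$ has no martingale part. For a fixed smooth map $r$, Itô's formula gives
\begin{align*}
\dd\,\tfrac12\|X_u - r(\bar X_u)\|^2 = \langle X_u - r(\bar X_u), -\alpha_u Y_u\rangle\dd u - \langle X_u - r(\bar X_u), \nabla r(\bar X_u) P_u\rangle \dd u + \tfrac32 d\,\alpha_u \dd u + \dd M_u,
\end{align*}
where $M$ is a martingale. The first term and the Itô correction $\frac{3}{2}d\alpha_u$ already appear in \eqref{eq:integrated-main}.

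\textbf{Handling the $r_u$ term.} The time-dependence of $r_u$ should be handled without differentiating $u\mapsto r_u$ (cf.\ Remark on Brenier regularity). We use the flow $\Phi$ from Assumption \ref{ass:FP-regularity} ii): since $(\Phi_{u,u+h})_\#\rho_u=\rho_{u+h}$, the map $r_{u+h}\circ\Phi_{u,u+h}$ pushes $\rho_u$ to $\pi$ but is not optimal. Brenier's theorem then gives
\[
\E\langle \bar X_u, r_u(\bar X_u)\rangle \ge \E\langle \bar X_u, r_{u+h}(\Phi_{u,u+h}(\bar X_u))\rangle,
\]
with the roles reversed as well. Using this one-sided comparison in a forward-difference argument, the change in $r$ can be traded for the term $\E\langle \mathbb{E}[X_u|\bar X_u] - r_u(\bar X_u), \nabla r_u(\bar X_u) v_u(\bar X_u)\rangle$ with the opposite sign, up to an inequality. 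Here $v_u(\bar x)=\mathbb{E}[P_u|\bar X_u=\bar x]=\frac{\alpha_u}{A_u}(m_u(\bar x)-\bar x)$ and $m_u=\mathbb{E}[X_u|\bar X_u]$. Concretely, the net contribution of the $P_u$-term together with the map variation should reduce to
\[
-\frac{\alpha_u}{A_u}\E\bigl\langle X_u - m_u(\bar X_u), \nabla r_u(\bar X_u)(X_u - m_u(\bar X_u))\bigr\rangle = -\frac{\alpha_u}{A_u}\E\bigl[C^X_u(\bar X_u):\nabla r_u(\bar X_u)\bigr].
\]
This is the conditional-fluctuation part of $P_u$, which the deterministic flow $v_u$ cannot see. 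The mean part, involving $v_u$, is what the optimality of $r_u$ absorbs. Symmetry and positive semidefiniteness of $\nabla r_u$ (a gradient of a convex function) justify writing the result in covariance form.

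\textbf{Main obstacle.} The key difficulty is making the map-variation step rigorous. We need the first-order expansion of $h\mapsto \E\|m - r_{u+h}\circ\Phi_{u,u+h}\|^2$ without differentiability of $r$. We would use the optimality inequality at times $u$ and $u+h$ to sandwich the increment, divide by $h$, and pass $h\to 0$. This uses the integrability bounds \eqref{eq:acceleration bound} and \eqref{eq:FkP-integrability2} to control remainders and to make $M$ a true martingale. Finally, we integrate from $s$ to $t$; since only an inequality is needed, a one-sided (upper Dini) derivative estimate suffices.
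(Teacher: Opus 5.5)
Your proposal is correct and is essentially the paper's argument. The paper transports the Brenier map along the marginal flow, $\widetilde r_{t,s}(\Phi_{t,s}(\bar x)) = r_t(\bar x)$, and differentiates the surrogate $\widetilde F_t(s)=\frac12\E\|X_s-\widetilde r_{t,s}(\bar X_s)\|^2$ exactly at $s=t$; this is your It\^o term plus the transport-equation term, whose mean-velocity parts cancel and leave $-\frac{\alpha_t}{A_t}\E[C^X_t:\nabla r_t]$. It then shows $F(t+h)\le\widetilde F_t(t+h)+o(h)$: it splits $X_{t+h}=\bar X_{t+h}+(X_{t+h}-\bar X_{t+h})$, replaces $v_{t+h}\circ\Phi_{t,t+h}$ by $(\Phi_{t,t+h}-\operatorname{Id})/h$ so that Brenier optimality at the two times gives a sign, and bounds the remainder via \eqref{eq:acceleration bound}; it finally concludes from a distributional inequality $\partial_t F\le G$ plus continuity of $F$, where your Dini-derivative route would likewise need continuity of $F$ and the one-sided bound at every $u$.
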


\begin{proof}
See Appendix \ref{sec:proof_L2-distance_convex}.
\end{proof}

\begin{remark}[Regularity]
\label{rem:Brenier regularity}
The estimate in \eqref{eq:integrated-main} is essentially a differential inequality controlling the evolution of the distance $\tfrac12\E\|X_t-r_t(\bar X_t)\|^2$. However, since $u \mapsto r_u$ may not be differentiable in time \cite{Letrouit2026Unstable,Gigli2011Holder}, Lemma \ref{lemma:L2-distance_convex} is stated in integrated form.
\end{remark}

Before the statement of the next lemma, recall that the Fisher information of a probability measure $\nu \in \mathcal{P}(\mathbb{R}^d)$ is given by
\begin{align*}
    J(\nu) = \int_{\mathbb{R}^d} \|\nabla_x \ln{\nu(x)}\|^2\,\nu(\dd x),
\end{align*}
and that its relative Fisher information with respect to a probability measure $\mu \in \mathcal{P}(\mathbb{R}^d)$ is given by 
\begin{align*}
\mathcal{I}\bigl(\nu\|\,\mu\bigr) = \int_{\mathbb{R}^d} \Big\|\nabla_x \ln{\frac{\nu(x)}{\mu(x)}}\Big\|^2\,\nu(\dd x)
\end{align*}
whenever the right-hand sides are well-defined and finite (as usual, we denote the Lebesgue density of $\nu$, assumed to exist, by the same symbol).

\begin{lemma}[Entropy-Transport Cram{\'e}r-Rao inequality]
\label{lemma:ent-cramer-rao}
Let $\rho,\pi\in\mathcal P_2(\mathbb R^d)$ have strictly positive
Lebesgue densities and finite entropies, and let
$r:\mathbb R^d\to\mathbb R^d$ be the Brenier map transporting
$\rho$ to $\pi$. Let $\{\nu_{\bar{x}}\}_{\bar{x}\in\mathbb R^d}$ be a
$\rho$-measurable family of probability measures on $\mathbb R^d$.
Assume that, for $\rho$-almost every $\bar{x}$,
$\nu_{\bar{x}}$ has a strictly positive density, finite second moment,
and finite Fisher information. Denote its covariance matrix and Fisher
information by $C(\nu_{\bar{x}})$ and $J(\nu_{\bar{x}})$, respectively. Then
\begin{align}
\label{eq:transport-fisher-compensation}
    \Ent(\rho)-\Ent(\pi)
    +
    2d 
    \le \int_{\mathbb R^d}
    \left(
J(\nu_{\bar{x}})
+ C(\nu_{\bar{x}}):\nabla r(\bar{x})
\right) \rho(\dd \bar{x})
    .
\end{align}
Here $\nabla r$ denotes the $\rho$-almost-everywhere Alexandrov derivative \cite[Appendix]{mccann2013five}
of the Brenier map. 
\end{lemma}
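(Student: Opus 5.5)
The plan is to split the inequality into two pointwise estimates, each quite elementary. The first is a matrix Cramér--Rao lower bound for the right-hand side, optimised over the unknown covariance. The second is the Monge--Ampère equation for the Brenier map, which expresses the entropy difference on the left as an integral of $\ln\det\nabla r$. A scalar concavity inequality then links the two.

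\textbf{Step 1 (Cramér--Rao and optimisation over $C$).} For $\rho$-a.e.\ $\bar x$, the measure $\nu_{\bar x}$ has finite Fisher information and finite second moment. Hence its Fisher information matrix $\mathcal J(\nu_{\bar x})=\int \nabla\ln\nu_{\bar x}\otimes\nabla\ln\nu_{\bar x}\,\dd\nu_{\bar x}$ satisfies the matrix Cramér--Rao bound $\mathcal J(\nu_{\bar x})\succeq C(\nu_{\bar x})^{-1}$. One proves this by integrating by parts $\int (x-m)\otimes\nabla\ln\nu\,\dd\nu=-I_d$ and applying Cauchy--Schwarz. In particular $C(\nu_{\bar x})\succ0$, and $J(\nu_{\bar x})=\Tr\mathcal J(\nu_{\bar x})\ge\Tr C(\nu_{\bar x})^{-1}$.

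Next write $M=\nabla r(\bar x)$. This is the Alexandrov Hessian of a convex function, so $M\succeq0$. For $C\succ0$ we have
\begin{equation*}
\Tr C^{-1}+\Tr(CM)\;\ge\;2\Tr\big(M^{1/2}\big).
\end{equation*}
To see this, write $\Tr C^{-1}+\Tr(M^{1/2}CM^{1/2})-2\Tr M^{1/2}=\Tr\big[(C^{-1/2}-C^{1/2}M^{1/2})(C^{-1/2}-M^{1/2}C^{1/2})\big]\ge0$, which is the trace of a matrix of the form $B^TB$. Integrating against $\rho$, the right-hand side of \eqref{eq:transport-fisher-compensation} is therefore at least $2\int\Tr\big(\nabla r^{1/2}\big)\,\dd\rho$.

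\textbf{Step 2 (Monge--Ampère).} By McCann's theorem \cite{mccann2013five}, the Jacobian equation $\rho(\bar x)=\pi(r(\bar x))\det\nabla r(\bar x)$ holds $\rho$-a.e. with the Alexandrov derivative. Since $\rho>0$, this gives $\det\nabla r>0$ $\rho$-a.e. Taking logarithms and integrating against $\rho$, and using $r_\#\rho=\pi$ so that $\int\ln\pi(r)\,\dd\rho=\int\ln\pi\,\dd\pi$, we obtain
\begin{equation*}
\Ent(\rho)-\Ent(\pi)=\int\ln\det\nabla r\,\dd\rho.
\end{equation*}
Here the finite entropies justify splitting the integrals.

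\textbf{Step 3 (concavity of the logarithm).} Let $\lambda_i>0$ be the eigenvalues of $\nabla r(\bar x)$. Then
\begin{equation*}
\ln\det\nabla r=\sum_i 2\ln\sqrt{\lambda_i}\le\sum_i 2(\sqrt{\lambda_i}-1)=2\Tr(\nabla r^{1/2})-2d.
\end{equation*}
Integrating and combining with Steps 1 and 2 yields \eqref{eq:transport-fisher-compensation}.

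\textbf{Main obstacle.} The delicate part is the rigour of Step 2. The Brenier map is only a.e.\ differentiable in the Alexandrov sense, so the Jacobian equation must be taken from McCann's result rather than from a classical change of variables. We also need $\int\ln\det\nabla r\,\dd\rho$ to be well defined. Its positive part is controlled by Step 3, since $\int\Tr\nabla r^{1/2}\,\dd\rho$ is finite when the right-hand side of \eqref{eq:transport-fisher-compensation} is finite; otherwise there is nothing to prove. The identity in Step 2 then pins down the integral, given the finite entropies.
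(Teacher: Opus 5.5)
Your proposal is correct and follows essentially the same route as the paper. The paper also combines the entropy-change identity $\Ent(\rho)-\Ent(\pi)=\int\ln\det\nabla r\,\dd\rho$ from the Monge--Amp\`ere equation (Lemma \ref{ma}), the Cram\'er--Rao bound with a completed square giving $J+C:\nabla r\ge 2\Tr(\nabla r^{1/2})$ (Lemma \ref{fisher-information-and-traces}), and $\ln z\le z-1$ applied to $\sqrt{\lambda_i}$; your derivation of $C(\nu_{\bar x})\succ0$ from the integration-by-parts identity is a slightly cleaner justification than the paper's, which simply assumes $\mathcal J\succ0$.
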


\begin{proof}
See Appendix \ref{app:lemmas}. Conceptually, the inequality \eqref{eq:transport-fisher-compensation} combines the Cram{\'e}r-Rao inequality \cite{KAGAN19997}, which compares Fisher information and covariance, and the change-in-entropy formula $\Ent(\rho) - \Ent(\pi) = \int_{\mathbb{R}^d}\ln \det \nabla r \dd \rho$.    
Ideas along similar lines have been used in  \cite[Lemmas 5.1 and 5.2]{lu2026sharp}.
\end{proof}

\begin{remark}
 Note that the first term on the right-hand side of \eqref{eq:integrated-main} in Lemma \ref{lemma:L2-distance_convex} allows us to track the non-entropic part in the regret of the X-player \eqref{conv:x_regret}. The remaining two terms, resulting from the noise in \eqref{noisy}, are -- in combination with the remaining terms in \eqref{eq:motivated_ef} -- responsible for estimating the difference in entropy between $\operatorname{Law}(\bar{X}_t)$ and $\pi$ in \eqref{conv:x_regret}. Establishing this is the core task in the proof of Proposition \ref{prop:xplayer_convex}; it is achieved by applying a change of variables, the log-Sobolev inequality of the standard Gaussian, and Lemma \ref{lemma:ent-cramer-rao}. 
\end{remark}

We now state the proof of Proposition~\ref{prop:xplayer_convex}.  

\begin{proof}[Proof of Proposition~\ref{prop:xplayer_convex}]
Consider the energy function
\begin{align*}
    \Psi_t
    &=
    A_t \mathbb{E}_{\bar{X}_t}
    \left[
        \KL
        \left(
            \Law(X_t|\bar{X}_t)
            \,\middle|\,
            \mathcal{N}(r_t(\bar{X}_t), A_t I_d)
        \right)
    \right]
    \\
    &=
    A_t \mathbb{E}_{\bar{X}_t}
    [\Ent(X_t|\bar{X}_t)]
    + \frac{1}{2}\mathbb{E}
    [\|X_t-r_t(\bar{X}_t)\|^2]
    + A_t\frac{d}{2}\ln(2\pi A_t).
\end{align*}
For notational convenience, write
\begin{align*}
    H_t
    :=
    \mathbb{E}_{\bar{X}_t}
    [\Ent(X_t|\bar{X}_t)], \qquad 
    J(X_t|\bar{X}_t)
    :=
    \mathbb{E}_{\bar{X}_t}
    \big[
        J(X_t|\bar{X}_t=\bar{x})
    \big],
\end{align*}
for the expected conditional entropy and conditional Fisher information.
Using the Fokker--Planck equation for the joint distribution of
$(\bar{X}_u,X_u)$ and Assumption \ref{ass:FP-regularity}, we obtain 
\begin{align}
\label{eq:integrated-conditional-entropy}
    A_t H_t-A_s H_s
    &=
    \int_s^t
    \left[
         H_u
        + \Ent(\bar{X}_u)
        + d
        -A_u\frac{3}{2}
        J(X_u|\bar{X}_u)
    \right]
    \alpha_u\dd u
    -A_t\Ent(\bar{X}_t)
    +A_s\Ent(\bar{X}_s),
\end{align}
for $\tau\leq s\leq t$. Note also that
\begin{align}
\label{eq:integrated-gaussian-normalization}
    A_t\frac{d}{2}\ln(2\pi A_t)
    -
    A_s\frac{d}{2}\ln(2\pi A_s)
    =
    \int_s^t
    \left[
        \frac{d}{2}\ln(2\pi A_u)
        +\frac{d}{2}
    \right]
    \alpha_u\dd u.
\end{align}
Combining \eqref{eq:integrated-conditional-entropy},
\eqref{eq:integrated-gaussian-normalization}, and 
\eqref{eq:integrated-main} from Lemma \ref{lemma:L2-distance_convex}, we obtain
\begin{subequations}
\label{eq:PsitPsis}
\begin{align*}
    \Psi_t-\Psi_s
    &\leq
    \int_s^t
    \Bigg[
        H_u
        +\Ent(\bar{X}_u)
        +3d
        -A_u\frac{3}{2}
        J(X_u|\bar{X}_u)
        -
        \mathbb{E}
        \big[
            \langle
                X_u-r_u(\bar{X}_u),
                y_u(\bar{X}_u)
            \rangle
        \big]
        \\
        &\qquad\qquad
        -\frac{1}{A_u}
        \mathbb{E}
        \big[
            C_u^X(\bar{X}_u):
            \nabla_{\bar{x}}r_u(\bar{X}_u)
        \big]
        +\frac{d}{2}\ln(2\pi A_u)
    \Bigg]
    \alpha_u\dd u
    -A_t\Ent(\bar{X}_t)
    +A_s\Ent(\bar{X}_s).
    \tag{\ref{eq:PsitPsis}}
\end{align*}
\end{subequations}
Note that we have
\begin{equation}
\label{eq:coupling}
    \mathbb{E}
    \big[
        \langle
            r_u(\bar{X}_u),
            Y_u
        \rangle
    \big]
    \leq
    \sup_{R\sim\pi}
    \mathbb{E}[\langle R,Y_u\rangle],
\end{equation}
since the pair $(r_u(\bar{X}_u),y_u(\bar{X}_u))$ is one admissible coupling. Furthermore, by the definition in \eqref{conv:x_regret},
\begin{subequations}
\label{eq:RegtRegs}
\begin{align*}
    \Reg_t^X-\Reg_s^X
    &=
    \int_s^t
    \mathbb{E}[\langle X_u,Y_u\rangle]\alpha_u \dd u
    -
    \int_s^t
    \sup_{R\sim\pi}
    \mathbb{E}[\langle R,Y_u\rangle]\alpha_u \dd u
    \\
    & \qquad +A_t\Ent(\bar{X}_t)
    -A_s\Ent(\bar{X}_s)
    -(A_t-A_s)\Ent(\pi).
    \tag{\ref{eq:RegtRegs}}
\end{align*}
\end{subequations}
Adding \eqref{eq:PsitPsis} and \eqref{eq:RegtRegs}, and using the coupling inequality \eqref{eq:coupling}, we obtain
\begin{align}
\label{eq:energy-regret-before-rescaling}
    (\Psi_t+\Reg_t^X)
    -
    (\Psi_s+\Reg_s^X)
    & \leq
    \int_s^t
    \Bigg[
        H_u
        +3d
        -\frac{3}{2}A_uJ(X_u|\bar{X}_u)
        -\frac{1}{A_u}
        \mathbb{E}
        \big[
            C_u^X(\bar{X}_u):
            \nabla_{\bar{x}}r_u(\bar{X}_u)
        \big]
        \\
        & \qquad +\frac{d}{2}\ln(2\pi A_u)
        +\Ent(\bar{X}_u)
        -\Ent(\pi)
    \Bigg]
    \alpha_u\dd u.
\end{align}
At this point, the integrand on the right-hand side simplifies by
introducing the auxiliary variable
\begin{align*}
    P_u
    =
    \frac{1}{\sqrt{A_u}}
    (X_u-\bar{X}_u).
\end{align*}
Specifically, we make use of the following relations
(see~Lemma~\ref{lemma:scaling_shifting} in Appendix~\ref{app:lemmas}):
\begin{subequations}
\begin{align}
    \Ent(X_u|\bar{X}_u=\bar{x})
    &=
    \Ent(P_u|\bar{X}_u=\bar{x})
    -\ln A_u^{\frac{d}{2}},
    \label{simplification_0}
    \\
    A_uJ(X_u|\bar{X}_u=\bar{x})
    &=
    J(P_u|\bar{X}_u=\bar{x}),
    \\
    \frac{1}{A_u}C_u^X(\bar{x})
    &=
    C_u^P(\bar{x}).
\end{align}
\end{subequations}
Furthermore,
\begin{align}
\Ent(P_u|\bar{X}_u=\bar{x})
    +\frac{d}{2}\ln(2\pi)
    \nonumber
    =
    \KL\left(
        \Law(P_u|\bar{X}_u=\bar{x})
        \,\middle|\,
        \mathcal{N}(0,I_d)
    \right)
    -
    \frac{1}{2}
    \mathbb{E}
    \left[
        \|P_u\|^2
        \,\middle|\,
        \bar{X}_u=\bar{x}
    \right],
\end{align}
and
\begin{align}
\label{relative fisher-information}
    -J(P_u|\bar{X}_u=\bar{x})+2d
    &=
    -\mathcal{I}
    \left(
        \Law(P_u|\bar{X}_u=\bar{x})
        \,\middle|\,
        \mathcal{N}(0,I_d)
    \right) 
    +
    \mathbb{E}
    \left[
        \|P_u\|^2
        \,\middle|\,
        \bar{X}_u=\bar{x}
    \right].
\end{align}
Using these identities in
\eqref{eq:energy-regret-before-rescaling}, we obtain
\begin{align}
\label{eq:intermediate-bound}
    &(\Psi_t+\Reg_t^X)-(\Psi_s+\Reg_s^X)
    \leq
    \int_s^t
    \Bigg\{
        \Ent(\bar{X}_u)-\Ent(\pi)
        +
        \mathbb{E}_{\bar{X}_u}
        \Bigg[
            \KL
            \left(
                \Law(P_u|\bar{X}_u)
                \,\middle|\,
                \mathcal{N}(0,I_d)
            \right)
            \nonumber\\
    &
            -\frac{3}{2}
            \mathcal{I}
            \left(
                \Law(P_u|\bar{X}_u)
                \,\middle|\,
                \mathcal{N}(0,I_d)
            \right)
            +
            \mathbb{E}
            [\|P_u\|^2|\bar{X}_u]
            -
            C_u^P(\bar{X}_u):
            \nabla_{\bar{x}}r_u(\bar{X}_u)
        \Bigg]
    \Bigg\}
    \alpha_u\dd u.
\end{align}

Applying the log-Sobolev inequality for the standard Gaussian,
\cite[Example 21.3]{villani2009optimal},
\begin{align}
\label{Log_sobolev}
    \KL
    \left(
        \Law(P_u|\bar{X}_u=\bar{x})
        \,\middle|\,
        \mathcal{N}(0,I_d)
    \right)
    \leq
    \frac{1}{2}
    \mathcal{I}
    \left(
        \Law(P_u|\bar{X}_u=\bar{x})
        \,\middle|\,
        \mathcal{N}(0,I_d)
    \right),
\end{align}
we obtain from \eqref{eq:intermediate-bound}
\begin{align*}
    (\Psi_t+\Reg_t^X)-(\Psi_s+\Reg_s^X)\leq &
    \int_s^t
    \alpha_u
    \Bigg\{
        \Ent(\bar{X}_u)-\Ent(\pi)
        +
        \mathbb{E}_{\bar{X}_u}
        \Bigg[
            -
            \mathcal{I}
            \left(
                \Law(P_u|\bar{X}_u)
                \,\middle|\,
                \mathcal{N}(0,I_d)
            \right)
            \\
            & +
            \mathbb{E}
            [\|P_u\|^2|\bar{X}_u]
            -
            C_u^P(\bar{X}_u):
            \nabla_{\bar{x}}r_u(\bar{X}_u)
        \Bigg]
    \Bigg\}
    \dd u.
\end{align*}
Using \eqref{relative fisher-information} again, this can be rewritten as
\begin{subequations}\label{eq:before-ent-cramer-rao}
    \begin{align}
    (\Psi_t+\Reg_t^X)-(\Psi_s+\Reg_s^X)
    & \leq
    \int_s^t
    \Bigg\{
        \Ent(\bar{X}_u)-\Ent(\pi)
        \\ 
        & \qquad +
        \mathbb{E}_{\bar{X}_u}
        \Big[
            2d
            -
            J(P_u|\bar{X}_u)
            -
            C_u^P(\bar{X}_u):
            \nabla_{\bar{x}}r_u(\bar{X}_u)
        \Big]
    \Bigg\}
    \alpha_u\dd u.
\end{align}
\end{subequations}
For each $u$, we now apply
Lemma~\ref{lemma:ent-cramer-rao} with $\rho=\operatorname{Law} \bar{X}_u$, $r=r_u$,
 and   $\nu_{\bar{x}}
    =
    \Law(P_u|\bar{X}_u=\bar{x})$
to conclude
\begin{align*}
    \Psi_t+\Reg_t^X
    \leq
    \Psi_s+\Reg_s^X,
    \qquad
    \tau\leq s\leq t.
\end{align*}
Taking $s=\tau$ and recalling that the centered regret satisfies
$\Reg_\tau^X=0$, we conclude that
\begin{align*}
    \Reg_t^X
    \leq
    \Psi_\tau-\Psi_t
    \leq
    \Psi_\tau.
\end{align*}
\end{proof}

\subsection{Overall convergence result}\label{sec:total_conv_conv}

\begin{theorem}\label{convex:convergence}
The solution to the coupled dynamics \eqref{eq:Yt XY section}-\eqref{convex_game} satisfies, for $t \geq \tau > 0$,
\begin{align*}
    \mathcal{F}(\operatorname{Law}(\bar{X}_t)) - \mathcal{F}(\pi) \leq \frac{A_{\tau}}{A_t}\operatorname{KL}(\operatorname{Law}(\bar{X}_{\tau}, X_{\tau})\, \|\, \Pi_{\tau}),
\end{align*}
where 
\begin{align}
    \Pi_{\tau}(\dd \bar{x}, \dd x) = \pi(\dd \bar{x})\mathcal{N}(r_{\tau}(\bar{x}), A_{\tau}I_d)(\dd x).
\end{align}
\end{theorem}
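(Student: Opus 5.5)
The plan is to chain together the three results already established: the regret-to-suboptimality reduction (Proposition \ref{theorem:sub-optimality}), the zero-regret property of the Y-player (Proposition \ref{y_player}), and the regret bound for the X-player (Proposition \ref{prop:xplayer_convex}). The one remaining ingredient is the chain rule for the KL divergence, which recognises the resulting sum of terms as a single joint KL divergence against $\Pi_\tau$.

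\textbf{Step 1: apply the reduction.} The coupled dynamics \eqref{eq:Yt XY section}--\eqref{convex_game} use the strategy $Y_s=\nabla V(\bar X_s)$, so Proposition \ref{y_player} gives $\Reg^Y_t=0$. Since $Y_s$ is $\sigma(\bar X_s)$-measurable, Proposition \ref{prop:xplayer_convex} applies and gives
\[
\Reg_t^X\le A_\tau\,\mathbb{E}_{\bar X_\tau}\big[\KL\big(\Law(X_\tau|\bar X_\tau)\,\|\,\mathcal N(r_\tau(\bar X_\tau),A_\tau I_d)\big)\big].
\]
Substituting both bounds into Proposition \ref{theorem:sub-optimality} yields
\[
\mathcal F(\rho_t)-\mathcal F(\pi)\le \frac{A_\tau}{A_t}\Big(\KL(\rho_\tau\|\pi)+\mathbb{E}_{\bar X_\tau}\big[\KL\big(\Law(X_\tau|\bar X_\tau)\,\|\,\mathcal N(r_\tau(\bar X_\tau),A_\tau I_d)\big)\big]\Big).
\]
Here I use $\mathcal F(\rho_\tau)-\mathcal F(\pi)=\KL(\rho_\tau\|\pi)$, which holds by \eqref{eq:free energy}.

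\textbf{Step 2: identify the bracket.} Write the joint density of $(\bar X_\tau,X_\tau)$ as $\rho_\tau(\bar x)\,\nu_{\bar x}(x)$, where $\nu_{\bar x}=\Law(X_\tau|\bar X_\tau=\bar x)$. The reference measure has density $\Pi_\tau(\bar x,x)=\pi(\bar x)\,\mathcal N(r_\tau(\bar x),A_\tau I_d)(x)$. The log-ratio of the two densities therefore splits additively as
\[
\ln\frac{\rho_\tau(\bar x)}{\pi(\bar x)}+\ln\frac{\nu_{\bar x}(x)}{\mathcal N(r_\tau(\bar x),A_\tau I_d)(x)}.
\]
Integrating against the joint law gives exactly the bracket in Step 1. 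Note that the conditional term is averaged over $\bar x\sim\rho_\tau$, not over $\pi$: the second factor of $\Pi_\tau$ is merely a measurable kernel evaluated at $\bar x$, so this is precisely what the chain rule produces. Rearranging completes the proof.

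\textbf{Main obstacle.} The step needing most care is justifying the chain-rule decomposition: the two terms must not both be infinite with opposite signs. Both terms are nonnegative KL divergences, so the splitting holds in $[0,\infty]$ by the standard disintegration argument. If the right-hand side is infinite the statement is trivial.

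Two further technicalities are needed.
\begin{itemize}
\item Proposition \ref{prop:xplayer_convex} requires finite entropies, so that $\Reg_\tau^X=0$ is meaningful. This is supplied by Assumption \ref{ass:FP-regularity} through the correspondence \eqref{eq:QP-X_c}.
\item $r_\tau$ is only defined $\rho_\tau$-almost everywhere. This is harmless, since $\Pi_\tau$ enters only through integration against the joint law, whose first marginal is $\rho_\tau$.
\end{itemize}
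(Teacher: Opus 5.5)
Your proposal is correct and matches the paper's proof: you combine Propositions \ref{y_player} and \ref{prop:xplayer_convex} with Proposition \ref{theorem:sub-optimality} to get $\Psi_\tau/A_t + (A_\tau/A_t)\KL(\rho_\tau\|\pi)$, and then use the chain rule for the KL divergence to identify $\Psi_\tau + A_\tau\KL(\rho_\tau\|\pi) = A_\tau\KL(\Law(\bar X_\tau, X_\tau)\|\Pi_\tau)$. Your extra remarks on the $[0,\infty]$-valued chain rule and on $r_\tau$ being defined only $\rho_\tau$-a.e.\ are sound refinements that the paper leaves implicit.
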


\begin{proof}
    Apply Propositions \ref{y_player} and \ref{prop:xplayer_convex} to Proposition \ref{theorem:sub-optimality} to conclude that
    \begin{align*}
        \operatorname{KL}(\operatorname{Law}(\bar{X}_{t}) \| \pi) = \mathcal{F}(\operatorname{Law}(\bar{X}_t)) - \mathcal{F}(\pi) \leq \frac{\Psi_{\tau}}{A_t}+\frac{A_{\tau}}{A_t}\operatorname{KL}(\operatorname{Law}(\bar{X}_{\tau}) \| \pi).
    \end{align*}
    Finally, note that, by the chain rule for the KL divergence,
    \begin{align*}
        \Psi_{\tau} + A_{\tau}\operatorname{KL}(\operatorname{Law}(\bar{X}_{\tau})\| \pi) = A_{\tau}\operatorname{KL}(\operatorname{Law}(\bar{X}_{\tau}, X_{\tau}) \,\|\,\Pi_{\tau}).
    \end{align*}.
 \end{proof}
We are now able to state the Proof of Theorem \ref{theorem:UL_convex}. 
\begin{proof}[Proof of Theorem~\ref{theorem:UL_convex}]
We start by rewriting the game dynamics \eqref{eq:Yt XY section}-\eqref{convex_game} as the physical underdamped Langevin dynamics \eqref{eq:UL} following the change of variables \eqref{eq:QP-X_c} and It\^{o}'s formula \cite[Lemma 3.2]{pavliotis2014stochastic}:
\begin{align*}
    \dd P_t & = \frac{\alpha_t}{A_t}\dd X_t - \frac{\alpha_t}{A_t}\dd \bar{X}_t + \frac{\dd}{\dd t}\left(\frac{\alpha_t}{A_t}\right)(X_t - \bar{X}_t)\dd t \\
    & = -\frac{\alpha^2_t}{A_t}\nabla V(Q_t)\dd t +\left(\frac{A_t}{\alpha_t}\frac{\dd}{\dd t}\left(\frac{\alpha_t}{A_t}\right) - \frac{\alpha_t}{A_t}\right)P_t\dd t + \frac{\alpha_t}{A_t}\sqrt{3\alpha_t}\dd W_t.
\end{align*}
Thus, the game system and the physical system with $\gamma_t = \frac{3}{t}$ coincide if the weighting is chosen as
\begin{align*}
    A_t = \frac{t^2}{4},\;\;\;\alpha_t = \frac{t}{2}.
\end{align*}
Moreover, since $P_{\tau} = \frac{\alpha_{\tau}}{A_{\tau}} (X_\tau - \bar{X}_\tau) = \frac{2}{\tau}(X_\tau - \bar{X}_\tau)$ is an invertible linear transformation, the conditional KL divergences relate as:
\begin{align*}
\operatorname{KL}\left(\operatorname{Law}(X_{\tau}|\bar{X}_{\tau})\, \|\,\mathcal{N}(r_{\tau}(\bar{X}_\tau), A_{\tau}I_d) \right) = \operatorname{KL}\left(\operatorname{Law}(P_{\tau}|\bar{X}_{\tau})\,\|\,\mathcal{N}(2\tau^{-1}(r_{\tau}(\bar{X}_\tau)-\bar{X}_\tau), I_d) \right).
\end{align*}
Thus, since $Q_t = \bar{X}_t$, Theorem \ref{convex:convergence} readily gives the result.
\end{proof}

\begin{remark}\label{remark:scalings}
While under suitable regularity assumptions, Theorem \ref{convex:convergence} holds for all admissable weighting schedules $(A_t,\alpha_t)$ satisfying $\dot A_t=\alpha_t$, the Nesterov damping \eqref{Nesterov-Schedule} yields a clearer interpretation of the auxiliary variable $P_t$ introduced in the proof of Proposition \ref{prop:xplayer_convex}: Identifying $\bar X_t=Q_t$ and choosing $A_t=\alpha_t^2$, we recover the standard underdamped Langevin velocity $P_t=\frac{\alpha_t}{A_t}(X_t-Q_t)$ as in \eqref{eq:QP-X_c}.
\end{remark}

\begin{remark}[Relationships to previous works]\label{rem:previous work technical}
\leavevmode\par
\begin{enumerate}[label=(\alph*)]
    \item The upper bound from Theorem \ref{convex:convergence} has a straightforward but insightful reformulation. Let us denote the density of $\operatorname{Law}(Q_{\tau}, P_{\tau})$ by $\widehat{\rho}_{\tau}$ and write 
\begin{subequations}\label{eq:upperbound_rewritten}
\begin{align}
    \operatorname{KL}&(\operatorname{Law}(Q_{\tau}, P_{\tau})\, \|\, \Pi_{\tau})  = \int \ln{\frac{\widehat{\rho}_{\tau}(q,p)}{\frac{1}{Z}e^{-V(q)-\frac{1}{2}\|p-\frac{2}{\tau}(r_{\tau}(q)-q)\|^2}}}\widehat{\rho}_{\tau}(q,p)\dd q\dd p \\
    & = \int \ln{\frac{\widehat{\rho}_{\tau}(q,p)}{\frac{1}{Z}e^{-V(q)-\frac{1}{2}\|p\|^2}}}\widehat{\rho}_{\tau}(q,p)\dd q\dd p + \frac{2}{\tau}\underbrace{\int \langle p , q-r_{\tau}(q)\rangle \widehat{\rho}_{\tau}(q,p)\dd q\dd p}_{=:\mathcal{C}_{OT}(\operatorname{Law}(Q_{\tau}, P_{\tau}))} + \frac{2}{\tau^2}\mathcal{W}_2^2(\operatorname{Law}(Q_{\tau}), \pi) \\
    & = \operatorname{KL}(\operatorname{Law}(Q_{\tau}, P_{\tau})\, \|\, \pi \otimes \mathcal{N}(0,I_d)) + \frac{2}{\tau}\mathcal{C}_{OT}(\operatorname{Law}(Q_{\tau}, P_{\tau})) + \frac{2}{\tau^2}\mathcal{W}_2^2(\operatorname{Law}(Q_{\tau}), \pi),
\end{align}    
\end{subequations}
which highlights the additional terms when comparing $\operatorname{Law}(P_t | Q_t)$ with the time-dependent reference measure rather than with the invariant measure. We remark that the ``freedom`` in choosing the reference measure for $\operatorname{Law}(P_t | Q_t)$ is a result of building the analysis around the KL divergence of the $Q_t$-marginal and not of the joint $(Q_t, P_t)$-system. Note that the above reformulation clearly underscores the importance of the so-called Wasserstein current corrector $\mathcal{C}_{OT}(\operatorname{Law}(Q_{\tau}, P_{\tau}))$, introduced by \cite{lu2026sharp}, in the analysis of the underdamped Langevin dynamics.

\item In \cite{wang2022accelerated} and \cite{pmlr-v97-taghvaei19a}, the Euclidean energy function \eqref{eq:Lyapunov}, also explicitly encountered as $\frac{1}{2}\|q_t + \frac{t}{2}p_t - x^*\|^2$ in the standard Lyapunov analysis of Nesterov's ODE \eqref{eq:nesterov_ODE} \cite{siegel2019accelerated,NesterovODE}, was extended to the level of probability laws as 
\begin{align}\label{eq:WL_Lyapunov}
    \Psi_t = \frac{1}{2}\mathbb{E}_{Q_t}[\|Q_t + \frac{t}{2}P_t(Q_t) - r_t(Q_t)\|^2]
\end{align}
to cover accelerated dynamics on $\mathcal{P}_2(\mathbb{R}^d)$ that consider velocities given as deterministic functions of the position. In contrast, our energy function \eqref{eq:def_Psi_t} accounts for uncertainty in velocity given the position. In particular, it can be motivated by noting that a natural reference particle for $X_t$ is of the form $Q_{\infty}+A_t^{1/2}P_{\infty} = Q_{\infty}+\frac{t}{2}P_{\infty}$, with $(Q_{\infty}, P_{\infty}) \sim \pi \otimes \mathcal{N}(0,I_d)$, following the change of variables \eqref{eq:QP-X_c}, Nesterov's schedule \eqref{Nesterov-Schedule} and the invariant distribution of the physical system \eqref{eq:qp target}. The conditional representation then replaces $Q_{\infty}$ by $r_t(\bar{X}_t)$ at time $t$, similarly as in \eqref{eq:WL_Lyapunov}.
\end{enumerate}
\end{remark}

Based on the reformulation of the upper bound in Remark \ref{rem:previous work technical} (a), we state the proof of Corollary \ref{cor:UL_convex}.

\begin{proof}[Proof of Corollary~\ref{cor:UL_convex}]
Consider the convergence statement in Theorem \ref{theorem:UL_convex}. If $\operatorname{Law}(P_{\tau}\, | \, Q_\tau) = \mathcal{N}(0,I_d)$,
the joint KL divergence can be written as $\operatorname{KL}(\operatorname{Law}(Q_{\tau}) \| \pi) + \frac{2}{\tau^2} \mathcal{W}_2^2(\operatorname{Law}(Q_{\tau}), \pi)$; in particular since the Wasserstein current corrector in \eqref{eq:upperbound_rewritten} vanishes. This shows the claim.
\end{proof}

\section{Continuous-time Fenchel games for strongly log-concave sampling}
\label{sec:main_sconvex}

We now consider target distributions $\pi = \frac{1}{Z} e^{-V(q)} \dd q$ with $\sigma$-strongly convex $V$, with the ultimate goal of proving Theorem \ref{theorem:UL_strongly-convex}. We proceed as described in Section \ref{sec:games} and follow Section \ref{sec:main_convex} closely. Thus, the proof strategy is -- apart from some modifications that will be discussed below -- analogous: Two fictitious players, the X- and Y-players, are set to formally solve the saddle-point problem \eqref{eq:saddle G}. By comparing their accumulated losses with those of competitors, through the so-called regrets $\operatorname{Reg}_t^X$ and $\operatorname{Reg}_t^Y$, we can quantify the performance of their strategy and obtain an optimality estimate of the form \eqref{eq:regret_to_suboptimality}. After a change of variables of the form \eqref{eq:QP-X}, the game strategies can be related to the underdamped Langevin dynamics \eqref{eq:UL}, proving Theorem \ref{theorem:UL_strongly-convex}. \\
Due to many conceptual parallels, we also refer to Section \ref{sec:polyak} in the Appendix, which embeds Polyak's ODE \eqref{eq:polyak_ODE} into a game-based framework and similarly derives its convergence rate. \\
\\
We start by adapting the payoff function of the game. Since $\tilde{V}(x) := V(x) - \frac{\sigma}{2}\|x\|^2$ is convex, we may replace $\mathcal{G}$ in \eqref{eq:G} by 
\begin{align}\label{eq:G_sc}
\mathcal{G}(X,Y) := \mathbb{E}[\langle X,Y\rangle - \tilde{V}^*(Y)] +\frac{\sigma}{2}\mathbb{E}[\|X\|^2] + \operatorname{Ent}(X),
\end{align}
thus assigning the convex part of the potential, $\tilde{V}$, to the Y-player and the strongly convex part of it, $\frac{\sigma}{2}\|x\|^2$, to the X-player. This decomposition is further motivated in Remark \ref{rem: V split sc} and follows \cite[Section 4.6]{fenchelgames}. Furthermore, in this section, there is no need to omit $t = 0$ as was required in Section \ref{sec:main_convex}, see Remark \ref{remark:technical_tau}. Thus, from now on, the game, the integrated weight function $A_t$ in \eqref{eq:A def complicated}, and the time average $\bar{X}_t$ in \eqref{eq:X bar complicated} start at $\tau = 0$, where $\alpha$ and $A_0 > 0$ are seen as prescribed. Furthermore, $\bar{X}_0$ is chosen with $ \operatorname{Law}(\bar{X}_0) = \operatorname{Law}(Q_0)$. \\
\\
The competitors remain the same as in Section \ref{sec:main_convex}: The Y-competitor is constant in time, but plays the in-hindsight optimal strategy \eqref{eq:Y comp convex} (with $\tau = 0$). The X-competitor again corresponds to the probability measure $\pi$, playing  $X_s^{\text{comp}} = \argmax_{X \sim \pi} \mathbb{E} [\langle X, Y_s \rangle]$ in round $s \in [0, t]$, see \eqref{eq:X comp def}. The following definition specifies the abstract regrets \eqref{eq:regrets_intro} relative to these competitors, while accommodating the updated payoff function $\mathcal{G}$ in \eqref{eq:G_sc}. 

\begin{definition}[Regrets for the strongly convex setting]
\label{def:regrets_sc}
For played actions $\{(X_s,Y_s)\}_{0 \leq s\leq t}$ and abbreviating $\rho_t = \operatorname{Law}(\bar{X}_t)$, we define the regrets at time $t\geq 0$ as 
\begin{subequations}
\label{eq:X regret sc}
\begin{align}   
\label{eq:X regret main sc}
\Reg_t^{X}  = & \int_{0}^t \mathbb{E}\left[\langle X_s,Y_s\rangle \right]\alpha_s\dd s + A_t(\operatorname{Ent}\left(\rho_t\right) + \frac{\sigma}{2}\mathbb{E}[\|\bar{X}_t\|^2]) \\ 
& -\int_0^t\sup_{X \sim \pi}\mathbb{E}\left[\langle X,Y_s\rangle\right]\alpha_s\dd s - A_t(\operatorname{Ent}\left(\pi\right) + \frac{\sigma}{2}\mathbb{E}_{X \sim \pi}[\|X\|^2])
\\
\label{eq:RegX offset sc}
& - A_0(\Ent(\rho_0) + \frac{\sigma}{2}\mathbb{E}[\|\bar{X}_0\|^2]) + A_0 (\Ent(\pi)+\frac{\sigma}{2}\mathbb{E}_{X \sim \pi}[\|X\|^2])
\end{align}
\label{sc conv:x_regret}
\end{subequations}
and
\begin{subequations}
\label{eq:Y regret sc}
\begin{align}
\label{eq:Y player loss sc}
\Reg_t^{Y} & = A_0\mathbb{E}\left[\tilde{V}^*(Y_0)-\langle \bar{X}_0,Y_0\rangle\right] + \int_0^t\mathbb{E}\left[\tilde{V}^*(Y_s)-\langle X_s,Y_s\rangle \right]\alpha_s\dd s \\ & 
\label{sconv:y_regret}
\;\;\;\ - \inf_Y \left \{ A_t \mathbb{E} \left[ \tilde{V}^*(Y) - \langle \bar{X}_t,Y \rangle   \right]\right\}. \end{align} \end{subequations}
\end{definition}

\begin{remark}
We remark that the offset in \eqref{eq:RegX offset sc} ensures $\operatorname{Reg}_0^X = 0$.
\end{remark}

\begin{remark}\label{rem: V split sc}
Since $\tilde{V}$ is convex, the regret of the Y-player is recorded as in the log-concave sampling game in Section \ref{sec:main_convex}, see \eqref{eq:Y regret}. We have already seen in Proposition \ref{y_player} that employing the follow-the-leader strategy, as described in Remark \ref{remark: details_y}, leads to zero regret of the Y-player. Making the Y-player pay according to the whole function $V$ could not improve this estimate. This motivates instead charging the X-player the quadratic cost $\frac{\sigma}{2}\mathbb{E}[\|\bar{X}_t\|^2]$, thus making this cost explicit to her and, in turn, allowing her to adapt her strategy accordingly.
\end{remark}

Again, these regrets are designed to bound the sub-optimality in terms of the free energy $\mathcal{F}$:

\begin{proposition}\label{theorem:sub-optimality_sc}
Denoting $\rho_t = \Law(\bar{X}_t)$, the regrets from Definition \ref{def:regrets_sc} satisfy 
\begin{align*}
   \KL(\rho_t \| \pi) =  \mathcal{F}(\rho_t) -  \mathcal{F}(\pi) \leq \frac{\Reg_t^{X} + \Reg_t^{Y}}{A_t} + \frac{A_{0}}{A_t}
\left(\mathcal{F}(\rho_0) -  \mathcal{F}(\pi) \right), \qquad  t \geq 0.
\end{align*}
\end{proposition}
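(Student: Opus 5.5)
The plan is to mirror the proof of Proposition \ref{theorem:sub-optimality}, with the potential $V$ split into $\tilde V$ and the quadratic $\frac{\sigma}{2}\|x\|^2$. First add $\Reg_t^X$ and $\Reg_t^Y$ from Definition \ref{def:regrets_sc}. The terms $\pm\int_0^t \mathbb{E}[\langle X_s,Y_s\rangle]\alpha_s\dd s$ cancel, which leaves the following pieces:
\begin{itemize}
\item the $\sup_{X\sim\pi}$ term and the running term $\int_0^t\mathbb{E}[\tilde V^*(Y_s)]\alpha_s\dd s$;
\item the initial term $A_0\mathbb{E}[\tilde V^*(Y_0)-\langle\bar X_0,Y_0\rangle]$;
\item the hindsight infimum;
\item the entropy-plus-quadratic terms at times $t$ and $0$.
\end{itemize}

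Each piece is treated using the properties (F1)--(F4) applied to the convex function $\tilde V$ instead of $V$.
\begin{itemize}
\item By the biconjugate identity \eqref{eq:biconjugate_id} for $\tilde V$, the infimum equals $-A_t\mathbb{E}[\tilde V(\bar X_t)]$.
\item By Fenchel--Young \eqref{eq:Fenchel IE}, the initial term is at least $-A_0\mathbb{E}[\tilde V(\bar X_0)]$.
\item Pointwise, $\langle X,Y_s\rangle-\tilde V^*(Y_s)\le \tilde V(X)$. Taking expectations, then the supremum over $X\sim\pi$, and integrating against $\alpha_s$ gives
\[
\int_0^t\sup_{X\sim\pi}\mathbb{E}[\langle X,Y_s\rangle]\alpha_s\dd s-\int_0^t\mathbb{E}[\tilde V^*(Y_s)]\alpha_s\dd s\le (A_t-A_0)\,\mathbb{E}_{\pi}[\tilde V].
\]
Here $A_t-A_0=\int_0^t\alpha_s\dd s$ by \eqref{eq:A def complicated} with $\tau=0$.
\end{itemize}

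Collecting these bounds gives
\[
\Reg_t^X+\Reg_t^Y\ge A_t\Big(\mathbb{E}[\tilde V(\bar X_t)]+\tfrac{\sigma}{2}\mathbb{E}\|\bar X_t\|^2+\Ent(\rho_t)-\mathbb{E}_\pi[\tilde V]-\tfrac{\sigma}{2}\mathbb{E}_\pi\|X\|^2-\Ent(\pi)\Big)-A_0\big(\text{same expression at time }0\big).
\]
Since $\tilde V+\frac{\sigma}{2}\|\cdot\|^2=V$, each bracket is $\mathcal{F}(\rho_\cdot)-\mathcal{F}(\pi)$. So the sum is at least $A_t(\mathcal{F}(\rho_t)-\mathcal{F}(\pi))-A_0(\mathcal{F}(\rho_0)-\mathcal{F}(\pi))$. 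Dividing by $A_t>0$ and using $\KL(\rho_t\|\pi)=\mathcal{F}(\rho_t)-\mathcal{F}(\pi)$ yields the claim.

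The main obstacle is bookkeeping rather than analysis. The quadratic pieces assigned to the X-player, including the offset \eqref{eq:RegX offset sc}, must recombine exactly with the $\tilde V$ terms produced on the Y side into full $V$ terms, with matching signs at both times $t$ and $0$. A minor point to check is that the integrability needed for the expectations to be finite, for example $\mathbb{E}_\pi\|X\|^2<\infty$, holds; for strongly log-concave $\pi$ this is automatic.
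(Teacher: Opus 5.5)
Your proposal is correct and is essentially the paper's proof. The paper simply states that the argument of Proposition \ref{theorem:sub-optimality} carries over with the Fenchel properties applied to the convex function $\tilde V$, and that the terms recombine via $V=\tilde V+\frac{\sigma}{2}\|\cdot\|^2$ into $A_t(\mathcal{F}(\rho_t)-\mathcal{F}(\pi))-A_0(\mathcal{F}(\rho_0)-\mathcal{F}(\pi))$, which is exactly your bookkeeping.
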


\begin{proof}
The proof is analogous to that of Proposition \ref{theorem:sub-optimality}, using $V(x) = \tilde{V}(x) + \frac{\sigma}{2}\|x\|^2$.
\end{proof}

\textbf{From Fenchel games to underdamped Langevin dynamics.} 
In Sections \ref{sc:yplayer}-\ref{sc:convergence} below, we consider the general game given by the strategies
\begin{subequations}
\label{sc:game}    
\begin{align}
    Y_t & = \nabla \tilde{V}(\bar{X}_t),\quad\text{where }\bar{X}_t = \frac{A_{0}}{A_t}\bar{X}_{0} + \frac{1}{A_t}\int_{0}^t X_s\alpha_s\dd s, \label{eq: y player sc} \\
    \dd X_t & = - \beta_t(Y_t + \sigma X_t)\dd t + \sqrt{\beta_t(3+\sigma \beta_t^2)}\dd W_t, \label{eq: x player sc}
\end{align}
\end{subequations}
where $\beta_t := \frac{A_t}{\alpha_t}$. Note that the deterministic drift of the X-player's strategy can also be found in the strongly convex optimisation game in Appendix \ref{sec:polyak}, see \eqref{eq:game_Rd_P2}. Again, the correspondence to the underdamped Langevin system  \eqref{eq:UL} can be established by the change of variables of the form \eqref{eq:QP-X}:
\begin{align}
\label{eq:QP-X_sc}
Q_t = \bar{X}_t, \qquad P_t = \frac{\alpha_t}{A_t} (X_t - \bar{X}_t), \quad t \geq 0. 
\end{align}
Specifically, consider the weighting given by the \textit{Polyak schedule} 
\begin{align}\label{Polyak-Schedule}
    A_t = e^{\sqrt{\sigma}t},\;\;\; \alpha_t = \sqrt{\sigma}e^{\sqrt{\sigma}t},
\end{align}
this means $\beta_t \equiv \frac{1}{\sqrt{\sigma}}$. The game dynamics \eqref{eq: y player sc}-\eqref{eq: x player sc} then correspond to \eqref{eq:UL} if $\gamma_t \equiv  2\sqrt{\sigma}$; this in turn justifies the exact noise coefficient in \eqref{eq: x player sc}.

\subsection{Regret of the Y-player}\label{sc:yplayer}

Since $\tilde{V}$ is convex, the Y-player follows in \eqref{eq: y player sc} the same follow-the-leader strategy as in the log-concave sampling game, and the results from Section \ref{sec:FTL convex} readily apply.

\begin{proposition}[Regret bound for the Y-player]
\label{y_player_sc}
Fix $t \geq 0$. The strategy
\begin{align}
    Y_s = \nabla \tilde{V}(\bar{X}_{s}),\quad s \in [0,t]
\end{align}
leads to zero regret of the Y-player:
\begin{align}\label{FTL:no_regret_sc}
\Reg_t^{Y} = 0.
\end{align} 
\end{proposition}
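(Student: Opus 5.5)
The plan is to repeat the argument of Proposition \ref{y_player} verbatim, with $V$ replaced by the convex function $\tilde{V}(x) = V(x) - \frac{\sigma}{2}\|x\|^2$ and $\tau = 0$. Everything needed transfers. The Fenchel properties (F1)--(F4) hold for $\tilde{V}$ because it is convex and $C^1$. The average $\bar{X}_s$ from Definition \ref{def:weights} (with $\tau=0$, $A_0>0$) still satisfies $\frac{\dd}{\dd s}\bar{X}_s = \frac{\alpha_s}{A_s}(X_s - \bar{X}_s)$, as in \eqref{eq:average evolution}. All computations can again be done pathwise before taking expectations.

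\textbf{Step 1: the competitor term.} Apply Fenchel--Young \eqref{eq:Fenchel IE} for $\tilde{V}$ to get, for every random $Y$,
\[
\tilde V^*(Y) - \langle \bar X_t, Y\rangle \ge -\tilde V(\bar X_t),
\]
with equality at $Y = \nabla\tilde V(\bar X_t)$ by \eqref{FY_Eq}. Hence the infimum in \eqref{sconv:y_regret} equals $-A_t\mathbb{E}[\tilde V(\bar X_t)]$.

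\textbf{Step 2: the player's accumulated loss.} Since $Y_s = \nabla\tilde V(\bar X_s)$, the equality case \eqref{FY_Eq} gives
\[
\tilde V^*(Y_s) - \langle X_s, Y_s\rangle = -\tilde V(\bar X_s) + \langle \bar X_s - X_s, \nabla \tilde V(\bar X_s)\rangle .
\]
Differentiating $A_s\tilde V(\bar X_s)$ with $\dot A_s = \alpha_s$ and the evolution of $\bar X_s$ yields
\[
\frac{\dd}{\dd s}\bigl(A_s \tilde V(\bar X_s)\bigr) = -\alpha_s\bigl(\tilde V^*(Y_s) - \langle X_s,Y_s\rangle\bigr).
\]
Integrating over $[0,t]$ and taking expectations gives
\[
\int_0^t \mathbb{E}\bigl[\tilde V^*(Y_s)-\langle X_s,Y_s\rangle\bigr]\alpha_s\dd s = A_0\mathbb{E}[\tilde V(\bar X_0)] - A_t\mathbb{E}[\tilde V(\bar X_t)].
\]
The initial term is handled the same way: $A_0\mathbb{E}[\tilde V^*(Y_0)-\langle\bar X_0,Y_0\rangle] = -A_0\mathbb{E}[\tilde V(\bar X_0)]$. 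Summing, \eqref{eq:Y player loss sc} equals $-A_t\mathbb{E}[\tilde V(\bar X_t)]$, which cancels the competitor term from Step 1. Therefore $\Reg^Y_t = 0$.

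\textbf{Main obstacle.} There is no real conceptual difficulty. The only care point is justifying the pathwise chain rule and the exchange of expectation and time integral. This needs $s\mapsto \bar X_s$ to be $C^1$, which holds because $X_s$ is continuous; $\tilde V\in C^1$ then gives the chain rule. The expectations need to be finite, which follows from the integrability conditions in Assumption \ref{ass:FP-regularity} (the second moments and $\mathbb{E}\|\nabla V(Q_s)\|^2$, together with $\|\nabla\tilde V(x)\|\le\|\nabla V(x)\|+\sigma\|x\|$).
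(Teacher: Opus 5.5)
Your proposal is correct and follows the paper's own proof: the paper simply invokes the proof of Proposition~\ref{y_player} with $V$ replaced by the convex function $\tilde V$ and $\tau = 0$. That proof consists of the Fenchel--Young evaluation of the competitor term and the telescoping identity $\frac{\dd}{\dd s}\bigl(A_s\tilde V(\bar X_s)\bigr) = -\alpha_s\bigl(\tilde V^*(Y_s)-\langle X_s,Y_s\rangle\bigr)$, which is exactly what you carry out, and your remarks on pathwise differentiability and integrability only make explicit what the paper leaves implicit.
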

\begin{proof}
Since $\tilde{V}$ is assumed to be convex, the proof is analogous to that of Proposition \ref{y_player} with $V \equiv \tilde{V}$ and $\tau = 0$.
\end{proof}

\subsection{Regret of the X-player}\label{sc:noisy}

We now analyse the regret of the X-player's strategy,
\begin{align}\label{noisy_sc}
        \dd X_t & = - \beta_t(Y_t + \sigma X_t)\dd t + \sqrt{\beta_t(3+\sigma \beta_t^2)}\dd W_t,\qquad t \geq 0,
\end{align}
starting from an $\mathbb{R}^d$-valued initial random variable $X_{0}$, where $Y_t$ is assumed to be $\sigma(\bar{X}_t)$-measurable, and where 
$\beta_t = \frac{A_t}{\alpha_t}$. 

\begin{proposition}[Regret bound for the X-player]\label{prop:xplaer_sconvex} 
Fix $t \geq 0$. Denote by $r_s:\mathbb{R}^d\rightarrow \mathbb{R}^d$ the optimal transport map from $\operatorname{Law}(\bar{X}_s)$ to $\pi$, $s \in [0,t]$. Then, under Assumption \ref{ass:FP-regularity} (and imposing the correspondence \eqref{eq:QP-X_sc}), the solution to \eqref{noisy_sc} satisfies   
\begin{equation}\label{regret_estimate_sc}
    \operatorname{Reg}_t^{X} \leq \Psi_0 + \frac{1}{2}\int_0^t \big(\frac{1}{\alpha_s}\frac{\dd}{\dd s}(\frac{\alpha_s^2}{A_s})-\sigma \big)\mathbb{E}[\|X_s - r_s(\bar{X}_s)\|^2]\,\alpha_s\dd s + \frac{d}{2}\int_0^t (\sigma \beta_s^2-1)\alpha_s\dd s + d\int_0^t \dot{\beta}_s\alpha_s\dd s,
\end{equation}
with
\begin{align*}
    \Psi_0 = A_0\mathbb{E}_{\bar{X}_0}[\operatorname{KL}(\operatorname{Law} (X_0|\bar{X}_0)\,\|\,\mathcal{N}(r_0(\bar{X}_0), \beta_0^2 I_d))].
\end{align*}
\end{proposition}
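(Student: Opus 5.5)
We mirror the proof of Proposition~\ref{prop:xplayer_convex}. Consider the energy function
\begin{align*}
\Psi_t := A_t\,\mathbb{E}_{\bar X_t}\big[\KL(\Law(X_t|\bar X_t)\,\|\,\mathcal N(r_t(\bar X_t),\beta_t^2 I_d))\big]
= A_t H_t + \frac{A_t}{2\beta_t^2}\mathbb{E}\|X_t-r_t(\bar X_t)\|^2 + \frac{dA_t}{2}\ln(2\pi\beta_t^2),
\end{align*}
where $H_t=\mathbb{E}_{\bar X_t}[\Ent(X_t|\bar X_t)]$. Since $A_t/\beta_t^2=\alpha_t^2/A_t$, the middle coefficient is exactly the one whose derivative appears in \eqref{regret_estimate_sc}. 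The goal is an integrated inequality
\begin{equation*}
(\Psi_t+\Reg^X_t)-(\Psi_s+\Reg^X_s)\le \int_s^t R_u\,\alpha_u\dd u,
\end{equation*}
where $R_u$ is the integrand on the right of \eqref{regret_estimate_sc}. Taking $s=0$, using $\Reg_0^X=0$ and dropping $\Psi_t\ge 0$ then gives the claim.

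\textbf{Step 1 (entropy and normalisation).} I would derive the analogue of \eqref{eq:integrated-conditional-entropy} from the Fokker--Planck equation of $(\bar X_u,X_u)$ under \eqref{noisy_sc} and Assumption~\ref{ass:FP-regularity}. The drift is now $-\beta_u(Y_u+\sigma X_u)$, whose divergence in $x$ contributes $-d\sigma\beta_u$ to $\frac{\dd}{\dd u}H_u$. The diffusion contributes $-\frac{1}{2}\beta_u(3+\sigma\beta_u^2)J(X_u|\bar X_u)$. The $\bar X$-transport term, after multiplying by $A_u$, again produces $\Ent(\bar X_u)$ and the boundary terms $\mp A\Ent(\bar X)$ as before. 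The normalisation term is differentiated directly; this is the source of the terms $d\dot\beta_u\alpha_u$ and the logarithm.

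\textbf{Step 2 (transport term).} I would prove an analogue of Lemma~\ref{lemma:L2-distance_convex} for $\frac{\alpha_u^2}{2A_u}\mathbb{E}\|X_u-r_u(\bar X_u)\|^2$, again using the characteristic flow of Assumption~\ref{ass:FP-regularity}(ii) to handle the possibly non-differentiable map $u\mapsto r_u$. The product rule yields $\frac{1}{2}\frac{\dd}{\dd u}(\alpha_u^2/A_u)\,\mathbb{E}\|\cdot\|^2$. The drift, multiplied by $\alpha_u^2/A_u=\alpha_u/\beta_u$, gives
\begin{equation*}
-\alpha_u\,\mathbb{E}\langle X_u-r_u,\,Y_u+\sigma X_u\rangle .
\end{equation*}
The It\^o correction gives $\frac{d}{2}(3+\sigma\beta_u^2)\alpha_u$. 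The $r_u$-motion gives the covariance term $-\alpha_u\beta_u^{-2}\,\mathbb{E}[C^X_u:\nabla r_u]$.

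The key manipulation is the strongly convex splitting. Write
\begin{equation*}
\sigma\langle X-r,X\rangle=\tfrac{\sigma}{2}\|X-r\|^2+\tfrac{\sigma}{2}\|X\|^2-\tfrac{\sigma}{2}\|r\|^2 .
\end{equation*}
Here $\mathbb{E}\|r_u(\bar X_u)\|^2=\mathbb{E}_\pi\|X\|^2$. Combined with the regret increment from Definition~\ref{def:regrets_sc}, the pieces behave as follows.
\begin{itemize}
\item The term $-\frac{\sigma}{2}\|X-r\|^2$ produces the $-\sigma$ in the first integral of \eqref{regret_estimate_sc}.
\item The $\frac{\sigma}{2}$-moment terms cancel against $\frac{\dd}{\dd u}\big(A_u\frac{\sigma}{2}\mathbb{E}\|\bar X_u\|^2\big)=\alpha_u\sigma\,\mathbb{E}\langle\bar X_u,X_u\rangle-\frac{\sigma}{2}\alpha_u\mathbb{E}\|\bar X_u\|^2$ after completing squares. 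I expect a leftover $\frac{\sigma}{2}\|X-\bar X\|^2$, which is a $P$-second-moment term.
\item The $Y$-term is handled by the coupling bound \eqref{eq:coupling}.
\end{itemize}

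\textbf{Step 3 (Gaussian rescaling, LSI and Cram\'er--Rao).} Rescale $P_u=(X_u-\bar X_u)/\beta_u$ and use Lemma~\ref{lemma:scaling_shifting}; then $C^X/\beta^2=C^P$ and $\beta^2 J^X=J^P$. The Fisher coefficient becomes $-\frac{1}{2\beta_u}(3+\sigma\beta_u^2)\beta_u^{-2}J^P$ times $A_u=\alpha_u\beta_u$, that is $-\frac{1}{2}(3+\sigma\beta_u^2)J^P\alpha_u$. Using \eqref{relative fisher-information}, the Gaussian log-Sobolev inequality \eqref{Log_sobolev} absorbs the conditional KL with $\frac12\mathcal I$. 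One unit of $J^P$ is reserved for Lemma~\ref{lemma:ent-cramer-rao}, which cancels $\Ent(\bar X_u)-\Ent(\pi)+2d$. The residual $-\frac{\sigma\beta_u^2}{2}J^P$ should be used via Cram\'er--Rao against the leftover $\frac{\sigma}{2}\mathbb{E}\|X-\bar X\|^2$. What remains should be exactly $\frac{d}{2}(\sigma\beta_u^2-1)$.

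\textbf{Main obstacle.} The main obstacle is this final bookkeeping: verifying that the $\sigma$-dependent noise coefficient $3+\sigma\beta^2$ supplies precisely the extra Fisher information needed to dominate the extra second-moment terms from the quadratic splitting, with the constants $\frac d2(\sigma\beta^2-1)$ and $d\dot\beta$ coming out exactly. The regularity justification of Step 2 is secondary, since it copies Appendix~\ref{sec:proof_L2-distance_convex}. If the moment terms do not close, I would first try shifting the Gaussian reference mean (e.g.\ to a convex combination of $r_u$ and $\bar X_u$); however, the stated $\Psi_0$ indicates the mean $r$ is correct.
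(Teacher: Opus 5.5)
Your plan is the paper's proof step for step: the same energy $\Psi_t$ with variance $\beta_t^2$, Lemma~\ref{L2-distance_sc} for the transport term, the Fokker--Planck identity for $A_uH_u$, the splitting of $\sigma\langle X-r,X\rangle$ against $\frac{\dd}{\dd u}(A_u\frac{\sigma}{2}\mathbb{E}\|\bar X_u\|^2)$, the rescaling to $P_u$, the Gaussian log-Sobolev inequality and Lemma~\ref{lemma:ent-cramer-rao}. However, two signs in your bookkeeping are wrong, and they are exactly what your ``main obstacle'' turns on.

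First, the $x$-divergence of the drift $-\beta_u(y_u+\sigma x)$ is $-d\sigma\beta_u$. It therefore contributes $+d\sigma\beta_u$, not $-d\sigma\beta_u$, to $\frac{\dd}{\dd u}H_u$ when $\Ent=\int\rho\ln\rho$. Hence $\frac{\dd}{\dd u}(A_uH_u)$ contains $(1+\sigma\beta_u^2)\alpha_u d$. With your sign the tally would come out as $-\frac d2(1+3\sigma\beta_u^2)$, a spuriously stronger bound.

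Second, completing the square leaves $-\frac{\sigma}{2}\alpha_u\mathbb{E}\|X_u-\bar X_u\|^2=-\frac{\sigma\beta_u^2}{2}\alpha_u\mathbb{E}\|P_u\|^2$. This has the favourable sign, and that matters. Had it been $+\frac{\sigma\beta_u^2}{2}\mathbb{E}\|P_u\|^2$, your Cram\'er--Rao step would fail: $-\frac{\sigma\beta_u^2}{2}\big(J(P_u|\bar X_u)-\mathbb{E}\|P_u\|^2\big)$ is not bounded above, since shifting the conditional mean leaves $J$ unchanged.

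With the correct signs there is nothing to ``dominate''. The residual Fisher term and the leftover combine into
\begin{equation*}
-\frac{\sigma\beta_u^2}{2}\Big(J(P_u|\bar X_u)+\mathbb{E}\|P_u\|^2\Big)=-\frac{\sigma\beta_u^2}{2}\Big(\mathbb{E}_{\bar X_u}\big[\mathcal I(\Law(P_u|\bar X_u)\,|\,\mathcal N(0,I_d))\big]+2d\Big)\le-\sigma\beta_u^2 d .
\end{equation*}
Your bound $\Tr C+\Tr C^{-1}\ge 2d$ gives the same. The constants then tally as $(1+\sigma\beta_u^2)+\frac{3+\sigma\beta_u^2}{2}-3-\sigma\beta_u^2=\frac12(\sigma\beta_u^2-1)$. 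Here $-3$ is the bound for the $\frac32 J(P_u|\bar X_u)$ block, together with $H_u$, the logarithm, $\Ent(\bar X_u)-\Ent(\pi)$ and the covariance term, after the log-Sobolev inequality and Lemma~\ref{lemma:ent-cramer-rao}. This is precisely the paper's last step, which rewrites the residual via \eqref{relative fisher-information_2} and drops $-\frac{\sigma\beta_u^2}{2}\mathcal I\le 0$.

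A minor slip: in Step 3 the diffusion coefficient is $\frac{\beta_u}{2}(3+\sigma\beta_u^2)$, not $\frac{1}{2\beta_u}(3+\sigma\beta_u^2)$. Your final coefficient $-\frac12(3+\sigma\beta_u^2)\alpha_u J(P_u|\bar X_u)$ is nonetheless correct. You also need not reprove the transport estimate, since Lemma~\ref{L2-distance_sc} already provides it.
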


The proof of Proposition \ref{prop:xplaer_sconvex} follows the same lines as that of Proposition \ref{prop:xplayer_convex}. In particular, it is based on estimating the growth of
\begin{equation}\label{eq:def_Psi_t_sc}
t \mapsto \underbrace{A_t\mathbb{E}_{\bar{X}_t}[\operatorname{KL}(\operatorname{Law} (X_{t}|\bar{X}_t)\,\|\,\mathcal{N}(r_t(\bar{X}_t), \beta_t^2 I_d))]}_{=: \Psi_t} + \Reg_t^X,
\end{equation}
from its initial value, which will turn out to be controlled by the time integrals in \eqref{regret_estimate_sc}. Note that the energy function 
\begin{subequations}
    \begin{align}
    \Psi_t & = A_t\mathbb{E}_{\bar{X}_t}[\operatorname{KL}(\operatorname{Law} (X_{t}|\bar{X}_t)\,\|\,\mathcal{N}(r_t(\bar{X}_t), \beta_t^2 I_d))] \\
    & = A_t \mathbb{E}_{\bar{X}_t}[\operatorname{Ent}(X_t|\bar{X}_t)] + \frac{A_t}{2\beta^2_t}\mathbb{E}[\|X_t - r_t(\bar{X}_t)\|^2] + A_t\frac{d}{2}\ln\big({2\pi \beta^2_t}\big) \label{eq:lyapunov_sc_2}
\end{align}
\end{subequations}
is the very same one from Section \ref{convex:noisy}, where we had specified $\beta^2_t = \frac{A^2_t}{\alpha_t^2} = A_t$ as in \eqref{Nesterov-Schedule}, see also Remark \ref{remark:choice_Psit}. For convenience in the proof of Proposition \ref{prop:xplaer_sconvex}, we again derive an explicit bound on the time evolution of the quadratic term in \eqref{eq:lyapunov_sc_2}:

\begin{lemma}\label{L2-distance_sc}
Denote by $r_t:\mathbb{R}^d\rightarrow \mathbb{R}^d$ the optimal transport map from $\text{Law } \bar{X}_t$ to $\pi$. Along the solution to \eqref{noisy_sc}, we have
    \begin{align}\label{L2_derivative_sc}
        & \frac{1}{2}\mathbb{E}\big[\|X_t-r_t(\bar{X}_t) \|^2\big]-\frac{1}{2}\mathbb{E}\big[\|X_s-r_s(\bar{X}_s) \|^2\big]\\
        & \quad \leq \int_s^t \Bigg[-\beta_u \mathbb{E}[\langle X_u - r_u(\bar{X}_u), Y_u +\sigma X_u\rangle] - \frac{1}{\beta_u} \mathbb{E}[C^X_u(\bar{X}_u) : \nabla r_u(\bar{X}_u)]  + \frac{\beta_u(3+\sigma \beta_u^2)}{2} d\Bigg]\dd u,
    \end{align}
where $C^X_t(\bar{x})$ is defined as
\begin{align*}
    C^X_t(\bar{x}) = \mathbb{E}\big[(X_t-\mathbb{E}[X_t|\bar{X}_t]) \otimes (X_t-\mathbb{E}[X_t|\bar{X}_t]) | \bar{X}_t = \bar{x}\big],
\end{align*}
i.e. it is the conditional covariance matrix of $X_t$ given $\bar{X}_t = \bar{x}$. 
\end{lemma}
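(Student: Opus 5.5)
The plan is to reproduce the argument behind Lemma \ref{lemma:L2-distance_convex}, with the drift $-\alpha_u Y_u$ replaced by $-\beta_u(Y_u+\sigma X_u)$ and the diffusion coefficient $3\alpha_u$ replaced by $\beta_u(3+\sigma\beta_u^2)$. Because $u\mapsto r_u$ need not be differentiable in time (Remark \ref{rem:Brenier regularity}), we avoid differentiating $r_u$ directly. Instead we use the characteristic flow $\Phi_{t,s}$ from Assumption \ref{ass:FP-regularity}(ii), which transports $\rho_u=\Law(\bar X_u)=\Law(Q_u)$ along $v_u$.

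\textbf{Step 1 (comparison coupling).} Fix $s<t$. The map $\tilde r_u := r_t\circ\Phi_{u,t}$ pushes $\rho_u$ forward to $\pi$, so it is an admissible but generally suboptimal transport map at time $u$, and it coincides with $r_t$ at $u=t$. Optimality of $r_s$ in the Brenier sense gives the upper bound at the endpoint, which is the source of the inequality. The discrepancy between $\E\|X_s-r_s(\bar X_s)\|^2$ and $\E\|X_s-\tilde r_s(\bar X_s)\|^2$ splits into two pieces. The piece $\E\|\tilde r_s(\bar X_s)\|^2$ equals the second moment of $\pi$, as does the corresponding term for $r_s$, so these cancel. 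The cross term satisfies $\E\langle X_s,r_s(\bar X_s)\rangle \ge \E\langle X_s,\tilde r_s(\bar X_s)\rangle$ only if we condition on $\bar X_s$ first, i.e. pass to $m_s(\bar x)=\E[X_s\mid\bar X_s=\bar x]$. I expect this to require care. To avoid it, I would instead partition $[s,t]$ finely and apply the comparison on each subinterval, then pass to the limit. The error terms vanish thanks to the acceleration bound \eqref{eq:acceleration bound} and the $L^2$ bounds in Assumption \ref{ass:FP-regularity}(i).

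\textbf{Step 2 (Itô on each piece).} On a subinterval, with $\tilde r$ fixed as a map composed with the smooth flow, apply Itô's formula to $\frac12\|X_u-\tilde r_u(\bar X_u)\|^2$. Since $\dd\bar X_u=\frac{\alpha_u}{A_u}(X_u-\bar X_u)\dd u = \beta_u^{-1}(X_u-\bar X_u)\dd u$, the process $\bar X_u$ has no martingale part. The quadratic variation of $X_u$ therefore produces exactly $\frac{\beta_u(3+\sigma\beta_u^2)}{2}d\,\dd u$, and the drift of $X_u$ produces $-\beta_u\E\langle X_u-\tilde r_u,Y_u+\sigma X_u\rangle$. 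The remaining term is $-\E\langle X_u-\tilde r_u(\bar X_u),\frac{\dd}{\dd u}\tilde r_u(\bar X_u)\rangle$.

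\textbf{Step 3 (the transport term, main obstacle).} We must show that in the limit the last term is bounded above by $-\frac1{\beta_u}\E[C^X_u(\bar X_u):\nabla r_u(\bar X_u)]$. Write $\frac{\dd}{\dd u}\tilde r_u(\bar X_u)=\nabla \tilde r_u(\bar X_u)\,(\beta_u^{-1}(X_u-\bar X_u)-v_u(\bar X_u))$, with $v_u(\bar x)=\E[\beta_u^{-1}(X_u-\bar X_u)\mid\bar X_u=\bar x]$ by \eqref{eq:QP-X_sc} and \eqref{eq:def v}. Only the fluctuation $X_u-m_u(\bar X_u)$ survives after conditioning, and it yields $-\beta_u^{-1}\E[C^X_u:\nabla r_u]$. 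Two further contributions must be nonpositive or vanish in the limit. The first is the mean-part term, which involves $\langle m_u-\tilde r_u,\nabla\tilde r_u(\cdot)\rangle$ against a zero-mean conditional fluctuation; it vanishes. The second comes from the flow correction, which is handled through the chain $\tilde r_u=r_t\circ\Phi_{u,t}$ and the fact that $r_t$ is optimal at $u=t$. Symmetry and positive semidefiniteness of the Alexandrov derivative $\nabla r_u$ (as $r_u=\nabla\varphi_u$ with $\varphi_u$ convex) are used to identify the limit and pass from $\nabla\tilde r_u$ to $\nabla r_u$. This identification is where I expect most of the technical work to lie.

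Summing over the partition and letting the mesh go to zero then gives \eqref{L2_derivative_sc}.
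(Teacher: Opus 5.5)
Your Step 1 points the key inequality the wrong way, and this comparison is exactly where the difficulty of the lemma sits. Write $F(u)=\frac12\E\|X_u-r_u(\bar X_u)\|^2$ and $\widetilde F(u)=\frac12\E\|X_u-\tilde r_u(\bar X_u)\|^2$. Since you freeze the map at the later time $t$, you have $F(t)=\widetilde F(t)$, hence
\[
F(t)-F(s)=\widetilde F(t)-\widetilde F(s)+\E\bigl\langle X_s,\,r_s(\bar X_s)-\tilde r_s(\bar X_s)\bigr\rangle .
\]
The inequality you want from optimality of $r_s$ is $\E\langle X_s,r_s(\bar X_s)\rangle\ge\E\langle X_s,\tilde r_s(\bar X_s)\rangle$. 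It would make the last term nonnegative, i.e.\ it gives a \emph{lower} bound on $F(t)-F(s)$, not the upper bound you need. It also fails in general, because $r_s$ is optimal for $\bar X_s$, not for $X_s$ or $m_s(\bar X_s)$. What optimality actually gives is that the $\bar X_s$-part $T_1:=\E\langle\bar X_s,r_s(\bar X_s)-\tilde r_s(\bar X_s)\rangle$ is $\ge0$, which is the unfavourable sign.

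Partitioning $[s,t]$ does not fix this. On a piece of length $h$ the error is a priori only $O(\|\tilde r_s-r_s\|_{L^2(\rho_s)})$. Even an $O(h)$ bound, summed over $\sim 1/h$ pieces, leaves an $O(1)$ error. Neither the acceleration bound nor the moment bounds of Assumption \ref{ass:FP-regularity}(i) produce the required $o(h)$.

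The missing idea is a sign argument that uses optimality at \emph{both} times; this is Lemma \ref{lemma:frozen-map-comparison} in the paper. The paper freezes at the earlier time, $\widetilde r_{t,t+h}=r_t\circ\Phi_{t+h,t}$, and compares at $t+h$:
\begin{align*}
F(t+h)-\widetilde F_t(t+h)={}&\E\bigl\langle\bar X_{t+h},\,\widetilde r_{t,t+h}(\bar X_{t+h})-r_{t+h}(\bar X_{t+h})\bigr\rangle\\
&+\beta_{t+h}\,\E\bigl\langle v_{t+h}(\bar X_{t+h}),\,\widetilde r_{t,t+h}(\bar X_{t+h})-r_{t+h}(\bar X_{t+h})\bigr\rangle .
\end{align*}
The first term is $\le0$ by optimality of $r_{t+h}$. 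In the second, replace $v_{t+h}\circ\Phi_{t,t+h}$ by $(\Phi_{t,t+h}-\operatorname{Id})/h$. The main term is then $-\frac{\beta_{t+h}}{h}\int\langle\Phi_{t,t+h}-\operatorname{Id},\,r_{t+h}\circ\Phi_{t,t+h}-r_t\rangle\,\dd\rho_t\le0$. This sign holds because both couplings $(\bar x,r_t(\bar x))$ and $(\Phi_{t,t+h}(\bar x),r_{t+h}(\Phi_{t,t+h}(\bar x)))$ are optimal. The remainder is of order $h\,\|r_{t+h}\circ\Phi_{t,t+h}-r_t\|_{L^2(\rho_t)}=o(h)$, by the acceleration bound and stability of Brenier maps. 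The paper then concludes with a distributional argument: $\partial_tF\le G$ and $F$ is continuous.

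Your orientation could be rescued by the same mechanism. There the flow term equals $-\frac{\beta_s}{h}(T_1+T_1')+o(h)$, with $T_1'=\E\langle\bar X_t,r_t(\bar X_t)-r_s(\Phi_{t,s}(\bar X_t))\rangle\ge0$, and this absorbs $T_1$ once $h\le\beta_s$. But that two-sided optimality argument is exactly what your proposal lacks.

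Two smaller points:
\begin{itemize}
\item Optimality plays no role in Step 3. The $-v_u$ term merely recentres $X_u-\bar X_u$ at $m_u(\bar X_u)$. If you differentiate only at the freezing time, $\nabla\tilde r=\nabla r$ exactly, so no passage from $\nabla\tilde r_u$ to $\nabla r_u$ is required.
\item Pathwise It\^o for $\frac12\|X_u-\tilde r_u(\bar X_u)\|^2$ is not justified, since Brenier maps need not be $C^1$. This is why the paper differentiates through the Fokker--Planck equation instead.
\end{itemize}
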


\begin{proof}
The proof is given in Appendix \ref{sec:proof_L2-distance_convex}.
\end{proof}

Apart from Lemma \ref{L2-distance_sc}, we rely again on Lemma \ref{lemma:ent-cramer-rao} to show \eqref{regret_estimate_sc}. In this sense, the proof below contains no conceptual novelties compared to that of Proposition \ref{prop:xplayer_convex}; however, some algebraic reformulations are needed to relate the impact of the drift $-\beta_t \sigma X_t$ in \eqref{eq: x player sc} back to the regret \eqref{eq:X regret sc}.

\begin{proof}[Proof of Proposition~ \ref{prop:xplaer_sconvex}]
Consider the energy function
\begin{align*}
    \Psi_t & = A_t\mathbb{E}_{\bar{X}_t}[\operatorname{KL}(\operatorname{Law} (X_t|\bar{X}_t)\,\|\,\mathcal{N}(r_t(\bar{X}_t), \beta_t^2 I_d))] \\
    & = A_t \mathbb{E}_{\bar{X}_t}[\operatorname{Ent}(X_t|\bar{X}_t)] + \frac{A_t}{2\beta^2_t}\mathbb{E}[\|X_t - r_t(\bar{X}_t)\|^2] + A_t\frac{d}{2}\ln\big({2\pi \beta^2_t}\big).
\end{align*}
For notational convenience, write again
\begin{align*}
    H_t
    :=
    \mathbb{E}_{\bar{X}_t}
    [\Ent(X_t|\bar{X}_t)], \qquad 
    J(X_t|\bar{X}_t)
    :=
    \mathbb{E}_{\bar{X}_t}
    \big[
        J(X_t|\bar{X}_t=\bar{x})
    \big],
\end{align*}
for the expected conditional entropy and conditional Fisher information. Using the Fokker--Planck equation for $(\bar{X}_u,X_u)$ and Assumption \ref{ass:FP-regularity}, we see that 
\begin{align}\label{eq:integrated-conditional-entropy sc}
A_t H_t - A_s H_s & = \int_s^t \left[\alpha_u H_u +  \alpha_u \operatorname{Ent}(\bar{X}_u) + (1+\sigma \beta_u^2)\alpha_u d - A_u \frac{\beta_u(3+\sigma \beta_u^2)}{2} J(X_u|\bar{X}_u) \right]
    \dd u \\
    & \qquad - A_t \operatorname{Ent}(\bar{X}_t) + A_s \operatorname{Ent}(\bar{X}_s),
\end{align}
for $0 \leq s \leq t$. Note also that
\begin{align}\label{eq:integrated-gaussian-normalization_sc}
    A_t\frac{d}{2}\ln(2\pi \beta^2_t)
    -
    A_s\frac{d}{2}\ln(2\pi \beta^2_s)
    =
    \int_s^t
    \left[
        \alpha_u\frac{d}{2}\ln(2\pi \beta^2_u)
        + A_u d \frac{\dot{\beta}_u}{\beta_u}
    \right]
    \dd u.
\end{align}
Combining \eqref{eq:integrated-conditional-entropy sc}, \eqref{eq:integrated-gaussian-normalization_sc}, and \eqref{L2_derivative_sc} from Lemma \ref{L2-distance_sc}, we obtain
\begin{subequations}
\label{eq:PsitPsis_sc}
\begin{align*}
    \Psi_t-\Psi_s
    &\leq
    \int_s^t
    \Bigg[
        \alpha_u H_u
        +\alpha_u\Ent(\bar{X}_u)  + (1+\sigma \beta_u^2)\alpha_u d - A_u \frac{\beta_u(3+\sigma \beta_u^2)}{2} J(X_u|\bar{X}_u) 
         \\
         &\qquad\qquad -\frac{A_u}{\beta_u}
        \mathbb{E}
        \big[
            \langle
                X_u-r_u(\bar{X}_u),
                y_u(\bar{X}_u) + \sigma X_u
            \rangle
        \big]
        \\
        &\qquad\qquad
        -\frac{A_u}{\beta^3_u}
        \mathbb{E}
        \big[
            C_u^X(\bar{X}_u):
            \nabla_{\bar{x}}r_u(\bar{X}_u)
        \big]
        +\alpha_u\frac{d}{2}\ln(2\pi \beta^2_u) + A_u d \frac{\dot{\beta}_u}{\beta_u} + \frac{A_u(3+\sigma \beta_u^2)}{2\beta_u}d   \\
        &\qquad\qquad + \frac{\dd}{\dd u}\left(\frac{A_u}{\beta_u^2} \right)\frac{1}{2}\mathbb{E}[\|X_u - r_u(\bar{X}_u)\|^2]
    \Bigg]
    \dd u 
    -A_t\Ent(\bar{X}_t)
    +A_s\Ent(\bar{X}_s). \tag{\ref{eq:PsitPsis_sc}}
\end{align*}
\end{subequations}
Now, recall that by the 
definition of the regret of the X-player from Definition \ref{def:regrets_sc}:
\begin{subequations}
\label{eq:RegtRegs_sc}
\begin{align*}
    \Reg_t^X-\Reg_s^X
    &=
    \int_s^t
    \mathbb{E}[\langle X_u,Y_u\rangle]\alpha_u\dd u
    -
    \int_s^t
    \sup_{R\sim\pi}
    \mathbb{E}[\langle R,Y_u\rangle]\alpha_u\dd u+ A_t(\Ent(\bar{X}_t) + \frac{\sigma}{2}\mathbb{E}[\|\bar{X}_t\|^2]) \\
    & \qquad 
    -A_s(\Ent(\bar{X}_s) + \frac{\sigma}{2}\mathbb{E}[\|\bar{X}_s\|^2])
    -(A_t-A_s)\left(\Ent(\pi) + \frac{\sigma}{2}\mathbb{E}_{X \sim \pi}[\|X\|^2])\right).
\end{align*}
\end{subequations}
Since
\begin{align*}
    \mathbb{E}[\langle r_u(\bar{X}_u), Y_u\rangle] \leq \sup_{R \sim \pi} \mathbb{E}[\langle R,Y_u\rangle],
\end{align*}
we can in particular write 
\begin{subequations}\label{eq:tech_1}
\begin{align}
& -\int_s^t \frac{A_u}{\beta_u}\mathbb{E}\big[\langle X_u-r_u(\bar{X}_u),y_u(\bar{X}_u)\rangle
\big]\dd u- A_t \operatorname{Ent}(\bar{X}_t) + A_s \operatorname{Ent}(\bar{X}_s) \\
& \qquad\leq  - \operatorname{Reg}_t^X + \operatorname{Reg}_s^X + A_t \frac{\sigma}{2}\mathbb{E}[\|\bar{X}_t\|^2] - A_s\frac{\sigma}{2}\mathbb{E}[\|\bar{X}_s\|^2] - (A_t - A_s)(\operatorname{Ent}(\pi) + \frac{\sigma}{2}\mathbb{E}_{X \sim \pi}[\|X\|^2]).
\end{align}
\end{subequations}
Now note that since $\dd \bar{X}_t = \frac{\alpha_t}{A_t}(X_t - \bar{X}_t) \dd t$, 
\begin{align*}
  \frac{\dd}{\dd u}\left( \frac{A_u}{2} \mathbb{E}[\|\bar{X}_u\|^2]\right)& =\frac{\alpha_u}{2}\left(\E\norm{X_u}^2-\E\norm{X_u-\bar{X}_u}^2\right) \\
  & = \alpha_u\left(\mathbb{E}[\langle X_u - r_u(\bar{X}_u), X_u\rangle] + \frac{1}{2}\mathbb{E}_{X \sim \pi}[\|X\|^2] - \frac{1}{2}\mathbb{E}[\|X_u - r_u(\bar{X}_u)\|^2] -\frac{1}{2}\E\norm{X_u-\bar{X}_u}^2\right),
\end{align*}
which can be rearranged to 
\begin{subequations}\label{eq:tech_2}
\begin{align}
& A_t \frac{\sigma}{2}\mathbb{E}[\|\bar{X}_t\|^2] - A_s\frac{\sigma}{2}\mathbb{E}[\|\bar{X}_s\|^2]  - (A_t - A_s)\frac{\sigma}{2}\mathbb{E}_{X \sim \pi}[\|X\|^2] \\
& \qquad = \int_s^t \left[\sigma \mathbb{E}[\langle X_u - r_u(\bar{X}_u), X_u\rangle] - \frac{\sigma}{2}\mathbb{E}[\|X_u - r_u(\bar{X}_u)\|^2] -\frac{\sigma}{2}\E\norm{X_u-\bar{X}_u}^2\right]\alpha_u\,du.
\end{align}
\end{subequations}
Using $\beta_u = \frac{A_u}{\alpha_u}$, combining \eqref{eq:tech_1} and \eqref{eq:tech_2} and inserting them into \eqref{eq:PsitPsis_sc} yields
\begin{subequations}
\label{eq:PsitPsis_sc_2}
\begin{align*}
    \Psi_t + \Reg_t^X -(\Psi_s + \Reg_s^X)
    &\leq
    \int_s^t\Bigg[H_u+ (1+\sigma \beta_u^2)d - \frac{\beta^2_u(3+\sigma \beta_u^2)}{2} J(X_u|\bar{X}_u) -\frac{1}{\beta^2_u}\mathbb{E}\big[ C_u^X(\bar{X}_u):\nabla_{\bar{x}}r_u(\bar{X}_u)\big]\\
    &\qquad\qquad+\frac{d}{2}\ln(2\pi \beta^2_u) + \frac{3+\sigma \beta_u^2}{2}d  - \frac{\sigma}{2}\E\norm{X_u-\bar{X}_u}^2 + \Ent(\bar{X}_u) - \Ent(\pi)
    \Bigg]
    \alpha_u\dd u  \\
    &\qquad\qquad + \underbrace{\frac{1}{2}\int_s^t
    (\frac{1}{\alpha_u}\frac{\dd}{\dd u}\left(\frac{\alpha^2_u}{A_u}\right)- \sigma)\mathbb{E}[\|X_u - r_u(\bar{X}_u)\|^2]
    \alpha_u\dd u + d \int_s^t \dot{\beta}_u \alpha_u\,du}_{(\star)}.
\end{align*}
\end{subequations}
Notice that the last two terms in \eqref{eq:PsitPsis_sc_2}, denoted by $(\star)$, already appear in the upper bound in \eqref{regret_estimate_sc}. Thus, we focus on the first time integral in the remainder of the proof. Again, we introduce the random variable 
\begin{align*}
    P_u = \frac{1}{\beta_u}(X_u-\bar{X}_u),
\end{align*}
which indeed corresponds to the physical underdamped Langevin velocity $P_t$, see \eqref{eq:QP-X_sc}. Following Lemma \ref{lemma:scaling_shifting}, we have 
\begin{align}
    \operatorname{Ent}(X_u| \bar{X}_u = {\bar{x}}) & = \operatorname{Ent}(P_u| \bar{X}_u = {\bar{x}}) - \ln{\beta_u^{d}}, \label{simplification_1}\\
    \beta^2_u J(X_u | \bar{X}_u = {\bar{x}}) & =  J(P_u | \bar{X}_u = {\bar{x}}),\\ 
    \frac{1}{\beta^2_u} C^X_u({\bar{x}}) & = C^P_u({\bar{x}}).
\end{align}
Furthermore, note that 
\begin{align*}
    \operatorname{Ent}(P_u|\bar{X}_u={\bar{x}}) + \frac{d}{2}\ln{2\pi} = \operatorname{KL}(\operatorname{Law}(P_u|\bar{X}_u={\bar{x}})\,\|\,\mathcal{N}(0,I_d)) - \frac{1}{2}\mathbb{E}[\|P_u\|^2|\bar{X}_u={\bar{x}}],
\end{align*}
and
\begin{align}\label{relative fisher-information_2}
-J(P_u | \bar{X}_u = {\bar{x}}) + 2d = -\mathcal{I}(\operatorname{Law}(P_u|\bar{X}_u={\bar{x}})|\mathcal{N}(0,I_d)) +\mathbb{E}[\|P_u\|^2|\bar{X}_u={\bar{x}}].
\end{align}

With these identities, we get 
\begin{align*}
    &(\Psi_t+\Reg_t^X)-(\Psi_s+\Reg_s^X)
    \leq
    \int_s^t
    \Bigg\{
        \Ent(\bar{X}_u)-\Ent(\pi)
        +
        \mathbb{E}_{\bar{X}_u}
        \Bigg[
            \KL
            \left(
                \Law(P_u|\bar{X}_u)
                \,\middle|\,
                \mathcal{N}(0,I_d)
            \right)
            \nonumber\\
    &
            -\frac{3}{2}
            \mathcal{I}
            \left(
                \Law(P_u|\bar{X}_u)
                \,\middle|\,
                \mathcal{N}(0,I_d)
            \right)
            +
            \mathbb{E}
            [\|P_u\|^2|\bar{X}_u]
            -
            C_u^P(\bar{X}_u):
            \nabla_{\bar{x}}r_u(\bar{X}_u)
        \Bigg]
    \Bigg\}
    \alpha_u\dd u + (\star) + d\int_s^t \frac{1}{2}(\sigma \beta_u^2 -1)\alpha_u\,du \\
    & - \frac{\sigma}{2}\int_s^t \left\{\mathbb{E}[\|X_u - \bar{X}_u\|^2] + \beta_u^2\left(\mathbb{E}_{\bar{X}_u}[\mathcal{I}(\operatorname{Law}(P_u|\bar{X}_u)|\mathcal{N}(0,I_d))]-\mathbb{E}[\|P_u\|^2]\right) \right\} \alpha_u\dd u.
\end{align*}
The first time integral in the right-hand side can be bounded above by zero as in the proof of Proposition \ref{prop:xplayer_convex}, by reasoning as in \eqref{eq:intermediate-bound} - \eqref{eq:before-ent-cramer-rao} and using Lemma \ref{lemma:ent-cramer-rao}. The third term, $d\int_s^t \frac{1}{2}(\sigma \beta_u^2 -1)\alpha_u\,du$, is explicitly contained in the regret estimate in eq. \eqref{regret_estimate_sc} and needs no further treatment. Thus, it remains to show that the last time integral can be estimated by zero. For this, simply note that $X_u - \bar{X}_u = \beta_u P_u$, thus the integrand can be written as 
\begin{align*}
    & - \mathbb{E}[\|X_u - \bar{X}_u\|^2] - \beta_u^2 \left(\mathbb{E}_{\bar{X}_u}[\mathcal{I}(\operatorname{Law}(P_u|\bar{X}_u)|\mathcal{N}(0,I_d))]-\mathbb{E}[\|P_u\|^2]\right) \\
    & \qquad =  - \beta_u^2 \mathbb{E}_{\bar{X}_u}[\mathcal{I}(\operatorname{Law}(P_u|\bar{X}_u)|\mathcal{N}(0,I_d))] \leq 0.
\end{align*}
With this, we can now estimate
\begin{align*}
     &(\Psi_t+\Reg_t^X)-(\Psi_s+\Reg_s^X) \\
     & \qquad \leq \frac{1}{2}\int_s^t \big(\frac{1}{\alpha_u}\frac{\dd}{\dd u}(\frac{\alpha_u^2}{A_u})-\sigma \big)\mathbb{E}[\|X_u - r_u(\bar{X}_u)\|^2]\alpha_u \dd u + \frac{d}{2}\int_s^t (\sigma \beta_u^2-1)\alpha_u \dd u + d\int_s^t \dot{\beta}_u\alpha_u \dd u,
\end{align*}
and the statement follows for $s = 0$ since $\Reg_0^X =0$ and $\Psi_t \geq 0$.
\end{proof}

\subsection{Overall convergence result}\label{sc:convergence}

\begin{theorem}\label{theorem:convergence_sc}
If $\beta_t = \frac{1}{\sqrt{\sigma}}$, the solution to the coupled dynamics \eqref{eq: y player sc}-\eqref{eq: x player sc} satisfies, for $t \geq 0$ 
\begin{align*}
    \mathcal{F}(\operatorname{Law}(\bar{X}_t)) - \mathcal{F}(\pi) \leq e^{-\sqrt{\sigma}t}\operatorname{KL}(\operatorname{Law}(\bar{X}_0, X_0)\|\Pi_0),
\end{align*}
where 
\begin{align*}
\Pi_0(\dd \bar{x},\dd x) = \pi(\dd \bar{x})\,\mathcal{N}\big(r_0(\bar{x}), \sigma^{-1}I_d\big)(\dd x).
\end{align*}
\end{theorem}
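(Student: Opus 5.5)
We follow the proof of Theorem \ref{convex:convergence}, now with the Polyak schedule \eqref{Polyak-Schedule}. The condition $\beta_t = A_t/\alpha_t \equiv 1/\sqrt{\sigma}$ forces $\dot A_t = \sqrt{\sigma} A_t$, so $A_t = A_0 e^{\sqrt{\sigma} t}$ and $\alpha_t = \sqrt{\sigma} A_t$. The factor $A_0$ cancels in every ratio $A_0/A_t$, so we may set $A_0=1$. First, Proposition \ref{y_player_sc} gives $\Reg_t^Y = 0$. Second, the three integral terms on the right-hand side of \eqref{regret_estimate_sc} in Proposition \ref{prop:xplaer_sconvex} all vanish for this schedule. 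Indeed, $\dot\beta_s = 0$ and $\sigma\beta_s^2 - 1 = 0$. Moreover, $\alpha_s^2/A_s = \sigma A_s$, so
\begin{equation*}
\frac{1}{\alpha_s}\frac{\dd}{\dd s}\Big(\frac{\alpha_s^2}{A_s}\Big) = \frac{\sigma \alpha_s}{\alpha_s} = \sigma ,
\end{equation*}
which cancels against $-\sigma$. Hence $\Reg_t^X \le \Psi_0$, with $\Psi_0 = A_0\mathbb{E}_{\bar X_0}[\KL(\Law(X_0|\bar X_0)\,\|\,\mathcal{N}(r_0(\bar X_0),\sigma^{-1}I_d))]$.

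Third, we insert both bounds into Proposition \ref{theorem:sub-optimality_sc}:
\begin{equation*}
\mathcal{F}(\rho_t)-\mathcal{F}(\pi) \le \frac{\Psi_0}{A_t} + \frac{A_0}{A_t}\KL(\rho_0\|\pi) = e^{-\sqrt{\sigma}t}\Big(\mathbb{E}_{\bar X_0}\big[\KL(\Law(X_0|\bar X_0)\,\|\,\mathcal{N}(r_0(\bar X_0),\sigma^{-1}I_d))\big] + \KL(\rho_0\|\pi)\Big).
\end{equation*}
Here we used $\mathcal{F}(\rho_0)-\mathcal{F}(\pi)=\KL(\rho_0\|\pi)$.

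Finally, the chain rule for KL divergence, applied to the disintegration of $\Pi_0$ as $\pi(\dd\bar x)$ times the kernel $\mathcal{N}(r_0(\bar x),\sigma^{-1}I_d)$, shows that the bracket equals $\KL(\Law(\bar X_0,X_0)\,\|\,\Pi_0)$. This yields the claim.

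The only step needing care is the cancellation of the $\mathbb{E}\|X_s-r_s(\bar X_s)\|^2$ term. That term has no sign, so the schedule must make its coefficient vanish exactly. This follows from the identity $\alpha^2/A = \sigma A$ under $\beta \equiv \sigma^{-1/2}$. The rest is bookkeeping identical to the convex case. The noise coefficient $\sqrt{\beta_t(3+\sigma\beta_t^2)}$ then becomes $2\sigma^{-1/4}$, consistent with the correspondence to \eqref{eq:UL} with $\gamma_t\equiv 2\sqrt{\sigma}$. This check is needed only afterwards, for Theorem \ref{theorem:UL_strongly-convex}.
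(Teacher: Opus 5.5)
Your proposal is correct and follows the paper's proof essentially verbatim. Under $\beta\equiv\sigma^{-1/2}$ all three integral terms in \eqref{regret_estimate_sc} vanish, Propositions \ref{y_player_sc}, \ref{prop:xplaer_sconvex} and \ref{theorem:sub-optimality_sc} give the bound $\Psi_0/A_t+(A_0/A_t)\KL(\rho_0\|\pi)$ with $A_0/A_t=e^{-\sqrt{\sigma}t}$, and the KL chain rule identifies the bracket with $\KL(\Law(\bar X_0,X_0)\|\Pi_0)$. One small wording point: $\mathbb{E}\|X_s-r_s(\bar X_s)\|^2$ is nonnegative, so it is its coefficient that lacks a definite sign in general (a nonpositive coefficient would already suffice), though here the coefficient vanishes exactly, as you verify.
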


\begin{proof}
First, note that with the choice
$$
\beta_t=\frac{A_t}{\alpha_t}\equiv\frac{1}{\sqrt{\sigma}},
$$
all time-integral terms in the upper bound \eqref{regret_estimate_sc} of Proposition \ref{prop:xplaer_sconvex} vanish. It then only remains to insert the estimates of Propositions \ref{y_player_sc} and \ref{prop:xplaer_sconvex} into the bound of Proposition \ref{theorem:sub-optimality_sc} to conclude that
    \begin{align*}
        \operatorname{KL}(\operatorname{Law}(\bar{X}_{t}) \| \pi) = \mathcal{F}(\operatorname{Law}(\bar{X}_t)) - \mathcal{F}(\pi) \leq \frac{\Psi_0}{A_t}+\frac{A_{0}}{A_t}\operatorname{KL}(\operatorname{Law}(\bar{X}_{0}) \| \pi).
    \end{align*}
    Finally, note that, by the chain rule for the KL divergence,
    \begin{align*}
        \Psi_0 + A_{0}\operatorname{KL}(\operatorname{Law}(\bar{X}_{0}) \| \pi) = A_{0}\operatorname{KL}(\operatorname{Law}(\bar{X}_{0}, X_{0}) \,\|\, \Pi_{0}).
    \end{align*}.
\end{proof}

We conclude this section by stating the proof of Theorem \ref{theorem:UL_strongly-convex}.

\begin{proof}[Proof of Theorem~\ref{theorem:UL_strongly-convex}]
Following the change of variables \eqref{eq:QP-X_sc} and It\^{o}'s formula \cite[Lemma 3.2]{pavliotis2014stochastic},  the game system \eqref{eq: y player sc}-\eqref{eq: x player sc} can be written in the form of the physical system \eqref{eq:UL} as
\begin{align*}
    \dd P_t & = \frac{\alpha_t}{A_t}\dd X_t - \frac{\alpha_t}{A_t}\dd \bar{X}_t + \frac{\dd}{\dd t}\left(\frac{\alpha_t}{A_t}\right)(X_t - \bar{X}_t)\dd t \\
    & = -\nabla V(Q_t)\dd t +\left(\frac{A_t}{\alpha_t}\frac{\dd}{\dd t}\left(\frac{\alpha_t}{A_t}\right) - \frac{\alpha_t}{A_t} - \sigma \frac{A_t}{\alpha_t}\right)P_t\dd t + \sqrt{\frac{3+\sigma \beta_t^2}{\beta_t}}\dd W_t.
\end{align*}
Thus, if $\gamma_t \equiv 2\sqrt{\sigma}$ in \eqref{eq:UL}, the dynamics coincide if the weighting is chosen as 
\begin{align*}
\beta_t=\frac{A_t}{\alpha_t}\equiv\frac{1}{\sqrt{\sigma}},
\end{align*}
meaning that
\begin{align*}
    A_t = e^{\sqrt{\sigma} t},\;\;\;\alpha_t = \sqrt{\sigma}e^{\sqrt{\sigma} t}.
\end{align*}
Since 
$P_{0} = \frac{\alpha_{0}}{A_{0}} (X_0 - \bar{X}_0) = \sqrt{\sigma}(X_0 - \bar{X}_0)$ is an invertible linear transformation, the conditional KL divergences relate as:
\begin{align*}
\operatorname{KL}\left(\operatorname{Law}(X_{0}|\bar{X}_{0})\, \|\,\mathcal{N}(r_0(\bar{X}_0), \beta^2_{0}I_d) \right) = \operatorname{KL}\left(\operatorname{Law}(P_{0}|\bar{X}_{0})\, \|\,\mathcal{N}(\sqrt{\sigma}(r_0(\bar{X}_0)-\bar{X}_0), I_d) \right).
\end{align*}
Since $Q_t = \bar{X}_t$, Theorem \ref{theorem:convergence_sc} gives the statement.
\end{proof}

We conclude this section by stating the proof of Corollary \ref{cor:UL_strongly-convex}. 

\begin{proof}[Proof of Corollary~\ref{cor:UL_strongly-convex}]
Consider the convergence statement in Theorem \ref{theorem:UL_strongly-convex}. In analogy to the extension in Remark \ref{rem:previous work technical} (a), we write
\begin{align*}
    \operatorname{KL}&(\operatorname{Law}(Q_0, P_0)\, \|\, \Pi_0)  = 
    \operatorname{KL}(\operatorname{Law}(Q_{0}, P_{0})\, \|\, \pi \otimes \mathcal{N}(0,I_d)) + \sqrt{\sigma}\mathcal{C}_{OT}(\operatorname{Law}(Q_{0}, P_{0})) + \frac{\sigma}{2}\mathcal{W}_2^2(\operatorname{Law}(Q_0), \pi).
\end{align*}    
For the choice $\operatorname{Law}(P_{0}\, | \, Q_0) = \mathcal{N}(0,I_d)$, it holds that $\mathcal{C}_{OT}(\operatorname{Law}(Q_{0}, P_{0}))=0$, and the above equation reduces to 
\begin{align}
    \operatorname{KL}&(\operatorname{Law}(Q_0, P_0)\, \|\, \Pi_0) = \operatorname{KL}(\operatorname{Law}(Q_{0}) \| \pi) + \frac{\sigma}{2}\mathcal{W}_2^2(\operatorname{Law}(Q_0), \pi),
\end{align}
which proves the claim.
\end{proof}

\begin{remark}[Relation between the strongly log-concave and log-concave sampling game]\label{rem:schedule}
The presentation in this paper moves from the log-concave sampling game to the strongly log-concave sampling game to clarify the gaming framework and the resulting proof technique. 
However, as the attentive reader might have noticed, the other direction would have been equally plausible. Setting $\sigma = 0$, the strongly log-concave sampling game recovers the log-concave one.
\leavevmode\par
\begin{enumerate}[label=(\alph*)]
    \item  We highlight in particular that the X-player's strategy \eqref{noisy}, given by $\dd X_t = -\alpha_t Y_t + \sqrt{3 \alpha_t}$, is an instance of \eqref{noisy_sc}, given by $\dd X_t = - \beta_t(Y_t + \sigma X_t)\dd t + \sqrt{\beta_t(3+\sigma \beta_t^2)}\dd W_t$. If one sets $\sigma = 0$ and chooses $\beta_t = \frac{A_t}{\alpha_t} = \alpha_t$, the strategies coincide. The latter is true for the eventually employed Nesterov schedule \eqref{Nesterov-Schedule}. 
    \item In this context, the more general presentation of the strongly log-concave game provides a reason for the explicit form of the Nesterov schedule \eqref{Nesterov-Schedule}. Setting $\sigma = 0$, the X-player's strategy \eqref{noisy_sc} becomes $\dd X_t = -\beta_t Y_t \dd t+ \sqrt{3 \beta_t}\dd W_t$, leaving $\beta_t = \frac{A_t}{\alpha_t}$ unspecified. The upper bound on the regret of the X-player \eqref{regret_estimate_sc} suggests that the weights should satisfy
\begin{align}
    \frac{\dd}{\dd t}\left(\frac{A_t}{\beta_t^2}\right) = \frac{\dd}{\dd t}\left(\frac{\alpha^2_t}{A_t}\right) = 0,
\end{align}
to cancel the integrated quadratic contribution.
This is in particular fulfilled if $A_t = \frac{t^2}{4}$, this means if $\alpha_t = \frac{t}{2}$ and $A_{\tau} = \frac{\tau^2}{4}$ by Definition \ref{def:weights}. Moreover, with this choice of weight, the remaining time integrals in \eqref{regret_estimate_sc} also cancel. In this sense, the Nesterov schedule \eqref{Nesterov-Schedule} optimises the upper bound on the X-player's regret and thus, by Proposition \ref{theorem:sub-optimality_sc}, also on the whole sub-optimality estimate.
\item Note that the Polyak schedule \eqref{Polyak-Schedule}, that was directly imposed in Theorem \ref{theorem:convergence_sc}, can also be motivated by optimising the upper bound in \eqref{regret_estimate_sc} in the case where $\sigma \neq 0$; see also Remark \ref{rem:schedule_P} in the strongly convex optimisation game.
\end{enumerate}
\end{remark}

\section{Appendix}\label{sec:Appendix}

\subsection{Fenchel games for accelerated gradient flows on $\mathbb{R}^d$}
\label{app:Fenchel Rd}
This appendix presents a continuous-time adaptation of the game-based framework for the convex minimisation task from \cite{fenchelgames}. We show that Nesterov's ODE \eqref{eq:nesterov_ODE} and Polyak's ODE \eqref{eq:polyak_ODE} can be understood as describing the time evolution of actions in this game. In particular, this interpretation allows us to quantify their convergence rates. Following lines similar to the sampling game, we refer to the main text for further remarks and references, but present a concise, streamlined version for those interested in extracting the core ideas. Section \ref{sec:nesterov_ODE} treats convex $f:\mathbb{R}^d \rightarrow \mathbb{R}$, and Section \ref{sec:polyak} treats $\sigma$-strongly convex $f$. Throughout, assume that $f$ is continuously differentiable and that $\text{argmin}_{x\in \mathbb{R}^d} f \neq \emptyset$.

\subsubsection{Nesterov's ODE}\label{sec:nesterov_ODE}

In this section, we embed Nesterov's ODE \eqref{eq:nesterov_ODE} into a game-based framework. To avoid constraints on the initial conditions, the initial time is $\tau > 0$. Thus, for $(q_{\tau},\dot{q}_{\tau}) \in \mathbb{R}^d \times \mathbb{R}^d$, we consider the solution to
\begin{align*}
    \ddot{q}_t = - \nabla f(q_t) - \frac{3}{t}\dot{q}_t, \qquad t \geq \tau > 0
\end{align*}
or, equivalently, for $(q_{\tau},p_{\tau}) \in \mathbb{R}^d \times \mathbb{R}^d$, to
\begin{subequations}
\label{eq:N}
\begin{align}
    \dot{q}_t & = p_t \\
    \dot{p}_t & = - \nabla f(q_t) - \frac{3}{t}p_t.
\end{align}
\end{subequations}
    
Our goal in this section is to verify the well-established $t^{-2}$ objective-value convergence rate from \cite{NesterovODE}, see \eqref{eq:conv_Nesterov}. However, we will only return to the exact form of Nesterov's ODE in Theorem \ref{theorem:convergence_N} once we have built up -- in analogy to \cite{fenchelgames} -- a game-based way of constructing and analysing descent dynamics to solve the unconstrained minimisation problem 
\begin{align}\label{eq:min}
\min_{x \in \mathbb{R}^d} f(x).
\end{align}
We then treat Nesterov's ODE as an example of one of them. First, we may rewrite \eqref{eq:min} via the saddle point problem
\begin{align*}
\min_{x \in \mathbb{R}^d} \max_{y \in \mathbb{R}^d}\, \langle x,y\rangle - f^*(y),
\end{align*}
in which a fictional x-player seeks to minimise, and a fictional y-player seeks to maximise the payoff function $g(x,y) = \langle x,y\rangle - f^*(y)$. Note that $f(x) = \max_{y \in \mathbb{R}^d}g(x,y)$. The x- and y-players' performance over time is quantified via regrets, which compare their actions against a competitor. For $\tau > 0$, we use a prescribed weight function $\alpha$ and initial weight $A_{\tau}$ to define the integrated weight $A_t$ as in Definition \ref{def:weights}, and use these to average the losses in time. The corresponding weighted average \eqref{eq:X bar complicated} is denoted by $\bar{x}_t$ and will serve as the output variable of the game. We now define the regrets of the x- and y-players, which compare their accumulated payoffs against those of competitors. 

\begin{definition}[Regrets for convex minimisation]\label{def:regrets_n}
For $x^* \in \text{argmin}_{x \in \mathbb{R}^d} f(x)$ and for played actions $\{(x_s,y_s)\}_{\tau \leq s\leq t}$, we define the regrets at time $t\geq\tau$ as
\begin{equation}
\label{eq:RegX Rd}
 \operatorname{Reg}_t^x  = \int_{\tau}^t \langle x_s, y_s\rangle\alpha_s\dd s  - \int_{\tau}^t \langle x^*, y_s\rangle \alpha_s\dd s,
\end{equation}
and
\begin{subequations}
\label{eq:RegY}
\begin{align}
  \operatorname{Reg}_t^y & = A_{\tau}(f^*(y_{\tau})-\langle \bar{x}_{\tau},y_{\tau}\rangle) + \int_{\tau}^t  \left(f^*(y_s) - \langle x_s, y_s\rangle\right)\alpha_s\dd s \\
  & \;\;\; - \min_{y \in \mathbb{R}^d}\left\{A_{\tau}\left(f^*(y)-\langle \bar{x}_\tau,y\rangle \right)+ \int_{\tau}^t \left( f^*(y)-\langle x_s, y\rangle \right) \alpha_s\dd s\right\}.
\end{align}
\end{subequations}
\end{definition}
Thus, the competing strategy for the x-player consists of playing $x^* \in \text{argmin}_{x \in \mathbb{R}^d}\,f(x)$ in all rounds, while the competing strategy for the y-player is an in-hindsight optimal, constant strategy. The relevance of the regrets is rooted in the fact that they are designed to control the sub-optimality of the weighted variable $\bar{x}_t$ \eqref{eq:X bar complicated}:
\begin{proposition}\label{theorem:sub-optimality_N}
The regrets from Definition \ref{def:regrets_n} satisfy 
\begin{align*}
    f(\bar{x}_t) - f(x^*) \leq \frac{\Reg_t^x + \Reg_t^y}{A_t} + \frac{A_{\tau}}{A_t}\left(f(\bar{x}_{\tau}) - f(x^*)\right), \qquad t \geq \tau > 0.
\end{align*}
\end{proposition}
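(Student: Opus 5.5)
The plan is to mirror the proof of Proposition \ref{theorem:sub-optimality}, now deterministically in $\mathbb{R}^d$. First, add the two regrets from Definition \ref{def:regrets_n}. The cross terms $\int_\tau^t \langle x_s,y_s\rangle\alpha_s\dd s$ appear with opposite signs and cancel, which leaves
\begin{align*}
\Reg_t^x+\Reg_t^y = {}& -\int_\tau^t \langle x^*,y_s\rangle\alpha_s\dd s + A_\tau\big(f^*(y_\tau)-\langle\bar x_\tau,y_\tau\rangle\big) + \int_\tau^t f^*(y_s)\alpha_s\dd s \\
& - \min_{y}\Big\{A_\tau\big(f^*(y)-\langle\bar x_\tau,y\rangle\big)+\int_\tau^t\big(f^*(y)-\langle x_s,y\rangle\big)\alpha_s\dd s\Big\}.
\end{align*}

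Next, I will bound each group of terms from below.

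\emph{The minimum.} By Definition \ref{def:weights}, $A_\tau\bar x_\tau+\int_\tau^t x_s\alpha_s\dd s = A_t\bar x_t$ and $A_\tau+\int_\tau^t\alpha_s\dd s=A_t$. Hence the bracket equals $A_t\big(f^*(y)-\langle\bar x_t,y\rangle\big)$. By the biconjugate identity \eqref{eq:biconjugate_id} (with $f$ in place of $V$), its minimum is $-A_t f(\bar x_t)$, so this term contributes $+A_tf(\bar x_t)$.

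\emph{The initial term.} The Fenchel--Young inequality \eqref{eq:Fenchel IE} gives $A_\tau\big(f^*(y_\tau)-\langle\bar x_\tau,y_\tau\rangle\big)\ge -A_\tau f(\bar x_\tau)$.

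\emph{The running terms.} Fenchel--Young again gives $f^*(y_s)-\langle x^*,y_s\rangle\ge -f(x^*)$ pointwise in $s$. Integrating against $\alpha_s$ yields a lower bound of $-(A_t-A_\tau)f(x^*)$.

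Collecting the three bounds,
\begin{align*}
\Reg_t^x+\Reg_t^y \ge A_t\big(f(\bar x_t)-f(x^*)\big) - A_\tau\big(f(\bar x_\tau)-f(x^*)\big).
\end{align*}
Dividing by $A_t>0$ and rearranging gives the claim.

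No step here is a genuine obstacle. The only points requiring care are the following. First, the minimum in \eqref{eq:RegY} must be attained, or else read as an infimum. This holds since $f$ is $C^1$: $y=\nabla f(\bar x_t)$ is a minimiser by (F3)/(F4). Second, the $A_\tau$-weighted initial terms must be matched correctly with the averaging identity \eqref{eq:X bar complicated}. Third, compared with the sampling version, there is no entropy term and no coupling supremum. The competitor $x^*$ is deterministic, which makes the argument strictly simpler.
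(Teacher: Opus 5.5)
Your proposal is correct and follows essentially the same route as the paper. Both sum the regrets so the cross terms cancel, rewrite the competitor's minimum as $-A_t f(\bar x_t)$ via the weighted-average identity and the biconjugate identity, and bound the $A_\tau$-term and the running terms from below by Fenchel--Young. For the running terms, the paper's bound via $\max_y\{\langle x^*,y\rangle - f^*(y)\} = f(x^*)$ is the same inequality as yours.
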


\begin{proof} Adding the two regrets in \eqref{eq:RegX Rd} and \eqref{eq:RegY} yields \begin{align*} \Reg_t^x + \Reg_t^y & = - \int_{\tau}^t \langle x^*, y_s\rangle\alpha_s\dd s + A_{\tau}\left(f^*(y_{\tau})-\langle \bar{x}_{\tau}, y_{\tau}\rangle \right) + \int_{\tau}^t f^*(y_s)\alpha_s\dd s\\ & \;\;\; - \min_{y \in \mathbb{R}^d}\left\{ A_{\tau}\left(f^*(y)-\langle \bar{x}_\tau,y\rangle \right) + \int_{\tau}^t \left(f^*(y)-\langle x_s,y\rangle\right)\alpha_s\dd s \right\}. \end{align*} 
The last line can be rewritten as \begin{align*} \min_{y \in \mathbb{R}^d}\left\{ A_{\tau}\left(f^*(y)-\langle \bar{x}_\tau,y\rangle \right) + \int_{\tau}^t \left(f^*(y)-\langle x_s,y\rangle\right)\alpha_s\dd s \right\} = -A_t\max_{y \in \mathbb{R}^d} \{\langle \bar{x}_t, y\rangle - f^*(y)\} = -A_t f(\bar{x}_t). \end{align*} For the remaining term, the Fenchel--Young inequality \eqref{eq:Fenchel IE} gives \begin{align*} A_{\tau}\left(f^*(y_{\tau})-\langle \bar{x}_{\tau}, y_{\tau}\rangle \right) \geq -A_{\tau}f(\bar{x}_{\tau}). \end{align*} Moreover, by \eqref{eq:biconjugate_id}, it holds that \begin{align*} \int_{\tau}^t \langle x^*, y_s\rangle\alpha_s\dd s - \int_{\tau}^t f^*(y_s)\alpha_s\dd s\leq \int_{\tau}^t \max_{y \in \mathbb{R}^d}\left\{ \langle x^*, y\rangle - f^*(y)\right\}\alpha_s\dd s = (A_t - A_{\tau})f(x^*). \end{align*} Rearranging yields the claim. \end{proof}

We now propose possible strategies for the x- and y-players. Let $(\bar{x}_{\tau}, x_{\tau}) \in \mathbb{R}^d \times \mathbb{R}^d$ be given. For $0 < \tau \leq t$, consider 
\begin{subequations}
\label{eq:game_Rd}
\begin{align}
y_t & = \nabla f(\bar{x}_t) \label{eq:game_Rd_N1}\\
\frac{d x_t}{d t} & = - \alpha_t y_t\label{eq:game_Rd_N2}.
\end{align}
\end{subequations}
The y-player plays the standard follow-the-leader strategy and the x-player an online gradient flow (see \cite[Section 2.2]{shalevshwartz2012online}, \cite[Chapter 6]{modernintroductiononlinelearning} or \cite[Algorithm 4 and 5]{fenchelgames} for discrete-time versions). Provided that we can bound the regrets $\operatorname{Reg}_t^X$ and $\operatorname{Reg}_t^Y$ along \eqref{eq:game_Rd_N1}-\eqref{eq:game_Rd_N2} uniformly in time, Proposition \ref{theorem:sub-optimality_N} implies a decay in the objective value $f$ with rate $A_t^{-1}$. Thus, the following two Propositions \ref{prop:y-player_N} and \ref{prop:x-player_N} estimate the regrets of the individual players, which are combined to estimate the convergence of the coupled system in Theorem \ref{theorem:convergence_game_N}.
\begin{proposition}[Regret bound for the y-player]\label{prop:y-player_N}
Fix $t \geq \tau$. The strategy
\begin{align}\label{FTL_N}
    y_s = \nabla f(\bar{x}_{s}),\quad s \in [\tau,t],
\end{align}
leads to zero regret of the y-player:
\begin{align}\label{FTL:no_regret_N}
\operatorname{Reg}_t^{y} = 0.
\end{align} \end{proposition}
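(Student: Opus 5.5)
The proof mirrors that of Proposition \ref{y_player}, now in a purely deterministic setting. It has two parts: identify the competitor term in closed form, then show that the accumulated loss of the strategy \eqref{FTL_N} telescopes to exactly the same value.

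\textbf{Step 1: the competitor term.} Using the definition of $\bar{x}_t$ in \eqref{eq:X bar complicated}, the minimised expression in \eqref{eq:RegY} simplifies to
\begin{align*}
A_t\bigl(f^*(y)-\langle \bar{x}_t,y\rangle\bigr).
\end{align*}
This is because $A_\tau \bar{x}_\tau + \int_\tau^t x_s\alpha_s\dd s = A_t\bar{x}_t$ and $A_\tau + \int_\tau^t \alpha_s \dd s = A_t$. By (F2) and (F3), i.e. \eqref{eq:biconjugate_id} and \eqref{FY_Eq}, its minimum over $y$ equals $-A_t f(\bar{x}_t)$, and it is attained at $y=\nabla f(\bar{x}_t)$.

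\textbf{Step 2: the player's accumulated loss.} Since $y_s=\nabla f(\bar{x}_s)$, the Fenchel--Young equality gives
\begin{align*}
f^*(y_s)-\langle x_s,y_s\rangle = -f(\bar{x}_s)+\langle \bar{x}_s-x_s,\nabla f(\bar{x}_s)\rangle .
\end{align*}
In particular, the $A_\tau$-term equals $-A_\tau f(\bar{x}_\tau)$. Differentiating \eqref{eq:X bar complicated} yields $\dot{\bar{x}}_s=\frac{\alpha_s}{A_s}(x_s-\bar{x}_s)$. Hence, by the chain rule,
\begin{align*}
\frac{\dd}{\dd s}\bigl(A_s f(\bar{x}_s)\bigr)=\alpha_s\bigl(f(\bar{x}_s)+\langle\nabla f(\bar{x}_s),x_s-\bar{x}_s\rangle\bigr)=-\alpha_s\bigl(f^*(y_s)-\langle x_s,y_s\rangle\bigr).
\end{align*}
Integrating from $\tau$ to $t$ shows that the running loss equals $A_\tau f(\bar{x}_\tau)-A_tf(\bar{x}_t)$. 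Adding the $A_\tau$-term leaves $-A_t f(\bar{x}_t)$, which matches Step 1, so $\operatorname{Reg}_t^y=0$.

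\textbf{Main obstacle.} The only delicate point is justifying the chain rule for $s\mapsto A_sf(\bar{x}_s)$. This follows from $f\in C^1$, $\alpha\in C^1$, and $\bar{x}$ being $C^1$, which holds because $x_s$ is continuous along the ODE \eqref{eq:game_Rd_N2}. Finiteness of $f^*(y_s)$ is automatic from the Fenchel--Young equality.
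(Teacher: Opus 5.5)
Your proposal is correct and follows essentially the same route as the paper's proof. Both collapse the competitor term to $-A_t f(\bar{x}_t)$ via the biconjugate identity, and both use the Fenchel--Young equality with $\dot{\bar{x}}_s=\frac{\alpha_s}{A_s}(x_s-\bar{x}_s)$ to show that $\frac{\dd}{\dd s}\bigl(A_s f(\bar{x}_s)\bigr)=-\alpha_s\bigl(f^*(y_s)-\langle x_s,y_s\rangle\bigr)$, which telescopes to the same value.
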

\begin{proof}
Recall that the regret of the y-player is, by Definition \ref{def:regrets_n}, given by 
\begin{align*}
\operatorname{Reg}_t^y & = A_{\tau}(f^*(y_{\tau})-\langle \bar{x}_{\tau},y_{\tau}\rangle) + \int_{\tau}^t \left(f^*(y_s) - \langle x_s, y_s\rangle\right) \alpha_s\dd s \\
  & \;\;\; - \min_{y \in \mathbb{R}^d}\left\{A_{\tau}\left(f^*(y)-\langle \bar{x}_\tau,y\rangle \right)+ \int_{\tau}^t \left( f^*(y)-\langle x_s, y\rangle \right) \alpha_s\dd s\right\}.
\end{align*}
The second line, recording the accumulated loss of the competitor, can be rewritten as
\begin{align*}
    \min_{y \in \mathbb{R}^d}\left\{A_{\tau}\left(f^*(y)-\langle \bar{x}_\tau,y\rangle \right)+ \int_{\tau}^t \left( f^*(y)-\langle x_s, y\rangle \right) \alpha_s\dd s\right\} = \min_{y \in \mathbb{R}^d}\left\{A_t\left(f^*(y)-\langle \bar{x}_t,y\rangle \right)\right\} = -A_tf(\bar{x}_t)
\end{align*}
by property \eqref{eq:biconjugate_id} of the Fenchel conjugates. Since $y_{\tau} = \nabla f(\bar{x}_{\tau})$, it further holds that $A_{\tau}(f^*(y_{\tau})-\langle \bar{x}_{\tau},y_{\tau}\rangle) = -A_{\tau}f(\bar{x}_{\tau})$ due to property \eqref{FY_Eq}, and the regret can be written as 
\begin{align*}
\operatorname{Reg}_t^y & = A_{\tau}f(\bar{x}_{\tau}) - A_t f(\bar{x}_t)  + \int_{\tau}^t \left(f^*(y_s) - \langle x_s, y_s\rangle\right)\alpha_s\dd s.
\end{align*}
Thus, to show the result \eqref{FTL:no_regret_N} it remains to show that
\begin{align*}
    \frac{\dd}{\dd s}\left(A_s f(\bar{x}_s)\right) = -(f^*(y_s) - \langle x_s, y_s\rangle)\alpha_s.
\end{align*}
Indeed, since $\bar{x}_s = \frac{A_{\tau}}{A_s}\bar{x}_{\tau} + \frac{1}{A_s}\int_\tau^sx_u\alpha_u \dd u$ and using again \eqref{FY_Eq}:
\begin{align*}
    \frac{\dd}{\dd s}\left(A_s f(\bar{x}_s)\right) & = \alpha_s f(\bar{x}_s) + A_s \langle \nabla f(\bar{x}_s), \frac{\alpha_s}{A_s}(x_s - \bar{x}_s)\rangle = \alpha_s \left(f(\bar{x}_s) + \langle \nabla f(\bar{x}_s), x_s - \bar{x}_s\rangle \right) \\
    & = - \alpha_s\left( f^*(\nabla f(\bar{x}_s)) - \langle \nabla f(\bar{x}_s),x_s\rangle \right) =  - \alpha_s( f^*(y_s) - \langle y_s,x_s\rangle),
\end{align*}
which shows the claim. 
\end{proof}
\begin{proposition}[Regret bound for the x-player]\label{prop:x-player_N}
Fix $t \geq \tau > 0$. The solution to \eqref{eq:game_Rd_N2} satisfies
\begin{align*}
     \operatorname{Reg}_t^x \leq \frac{1}{2}\|x_{\tau}-x^*\|^2.
\end{align*}
\end{proposition}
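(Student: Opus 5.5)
The plan is to use the energy function $\psi_t = \frac{1}{2}\|x_t - x^*\|^2$ from \eqref{eq:Lyapunov} and show that $t \mapsto \psi_t + \operatorname{Reg}_t^x$ is non-increasing on $[\tau, \infty)$. This is the deterministic analogue of the argument for Proposition \ref{prop:xplayer_convex}. Since $\operatorname{Reg}_\tau^x = 0$ by \eqref{eq:RegX Rd} and $\psi_t \geq 0$, monotonicity gives
\begin{align*}
\operatorname{Reg}_t^x \leq \psi_\tau - \psi_t \leq \frac{1}{2}\|x_\tau - x^*\|^2.
\end{align*}

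To prove monotonicity, I differentiate both terms. First, along \eqref{eq:game_Rd_N2},
\begin{align*}
\frac{\dd}{\dd t}\psi_t = \langle x_t - x^*, \dot{x}_t\rangle = -\alpha_t \langle x_t - x^*, y_t\rangle.
\end{align*}
Second, directly from \eqref{eq:RegX Rd},
\begin{align*}
\frac{\dd}{\dd t}\operatorname{Reg}_t^x = \alpha_t\langle x_t, y_t\rangle - \alpha_t\langle x^*, y_t\rangle = \alpha_t\langle x_t - x^*, y_t\rangle.
\end{align*}
These two derivatives cancel exactly. Hence $\psi_t + \operatorname{Reg}_t^x$ is constant, which is stronger than non-increasing, and equals $\frac{1}{2}\|x_\tau - x^*\|^2$. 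This yields the claimed bound, and in fact the identity $\operatorname{Reg}_t^x = \frac{1}{2}\|x_\tau - x^*\|^2 - \frac{1}{2}\|x_t - x^*\|^2$.

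I expect no real obstacle. The only point to check is regularity, namely that the map $t \mapsto x_t$ is $C^1$ so that the fundamental theorem of calculus applies. This holds because $y_t = \nabla f(\bar{x}_t)$ is continuous in $t$: $f \in C^1$, and $\bar{x}_t$ is $C^1$ by Definition \ref{def:weights}. The derivative computations are therefore legitimate, and integrating from $\tau$ to $t$ finishes the proof. Unlike the sampling case, no entropy terms or optimal-transport inequalities are needed, because the competitor $x^*$ is fixed and the dynamics is noiseless.
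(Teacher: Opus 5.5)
Your proposal is correct and is essentially the paper's own proof. The paper also differentiates $\psi_t = \frac{1}{2}\|x_t - x^*\|^2$, notes that $\frac{\dd}{\dd t}\psi_t = -\alpha_t\langle x_t - x^*, y_t\rangle = -\frac{\dd}{\dd t}\operatorname{Reg}_t^x$, and integrates from $\tau$ to $t$, obtaining the same identity $\operatorname{Reg}_t^x = \psi_\tau - \psi_t \le \psi_\tau$.
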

\begin{proof}
Differentiating the energy function $\psi_t = \frac{1}{2}\|x_t-x^*\|^2$ yields,
\begin{align*}
    \frac{\dd}{\dd t}\psi_t = \langle x_t - x^*, \dot{x}_t\rangle = -\alpha_t \langle x_t - x^*, y_t\rangle = - \frac{\dd}{\dd t}\operatorname{Reg}_t^x.
\end{align*}
We integrate from $\tau$ to $t$ and rearrange to 
\begin{align*}
    \operatorname{Reg}_t^X=\psi_{\tau} - \psi_{t}\leq \psi_{\tau}.
\end{align*} 
This proves the claim.
\end{proof}
\begin{theorem}\label{theorem:convergence_game_N}
Fix $t \geq \tau > 0$. The combined strategies \eqref{eq:game_Rd_N1}-\eqref{eq:game_Rd_N2} yield
\begin{align*}
    f(\bar{x}_t) - f(x^*) \leq \frac{\frac{1}{2}\|x_{\tau} - x^*\|^2 + A_{\tau}\left(f(\bar{x}_{\tau}) - f(x^*)\right)}{A_t}.
\end{align*}
\end{theorem}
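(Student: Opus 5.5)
The plan is to combine the three results just established, exactly as in the proof of Theorem \ref{convex:convergence} for the sampling game. The starting point is the regret-to-suboptimality reduction of Proposition \ref{theorem:sub-optimality_N}:
\begin{align*}
    f(\bar{x}_t) - f(x^*) \leq \frac{\Reg_t^x + \Reg_t^y}{A_t} + \frac{A_{\tau}}{A_t}\left(f(\bar{x}_{\tau}) - f(x^*)\right).
\end{align*}
Along the combined strategies \eqref{eq:game_Rd_N1}--\eqref{eq:game_Rd_N2}, the y-player follows the follow-the-leader strategy $y_s = \nabla f(\bar{x}_s)$. Hence Proposition \ref{prop:y-player_N} applies verbatim and gives $\Reg_t^y = 0$. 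The x-player's trajectory solves $\dot{x}_t = -\alpha_t y_t$, so Proposition \ref{prop:x-player_N} applies and yields $\Reg_t^x \le \tfrac12 \|x_\tau - x^*\|^2$.

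Substituting both bounds into the reduction gives
\begin{align*}
    f(\bar{x}_t) - f(x^*) \leq \frac{\tfrac12\|x_\tau - x^*\|^2 + A_\tau\left(f(\bar{x}_\tau)-f(x^*)\right)}{A_t},
\end{align*}
which is exactly the claim.

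The only thing to check is that the three results are applied to the same object. In each of them, $\bar{x}_t$ is the weighted average of Definition \ref{def:weights}, built from the same $A_\tau$, $\bar{x}_\tau$ and $\alpha$. Moreover, $y_s$ in Proposition \ref{prop:x-player_N} is an arbitrary input to the x-player's online gradient flow, so it may be taken to be the y-player's move $\nabla f(\bar{x}_s)$.

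The coupled system also needs a solution on $[\tau,t]$. Well-posedness follows because $f\in C^1$ and $\bar{x}$ is an integral of $x$, so the system is an ODE in $(x,\bar{x})$ via \eqref{eq:average evolution}. I assume this existence, as the paper does implicitly; it is the only point that could require care.

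No step is a real obstacle, since all the analytical content already sits in Propositions \ref{theorem:sub-optimality_N}, \ref{prop:y-player_N} and \ref{prop:x-player_N}.
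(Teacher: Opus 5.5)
Your proposal is correct and coincides with the paper's proof. The paper likewise inserts $\Reg_t^y = 0$ from Proposition \ref{prop:y-player_N} and $\Reg_t^x \le \frac{1}{2}\|x_\tau - x^*\|^2$ from Proposition \ref{prop:x-player_N} into the reduction of Proposition \ref{theorem:sub-optimality_N}; your caveat about existence of solutions to the coupled ODE is a fair point that the paper leaves implicit.
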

\begin{proof}
Apply Propositions \ref{prop:y-player_N} and \ref{prop:x-player_N} to Proposition \ref{theorem:sub-optimality_N}.
\end{proof}
We now use the previous considerations to show the convergence rate along the solution to Nesterov's ODE \eqref{eq:N}:
\begin{theorem}\label{theorem:convergence_N}
For $q^* \in \text{argmin}_{q \in \mathbb{R}^d} f(q)$, the solution to Nesterov's ODE \eqref{eq:N} satisfies  
\begin{align*}
    f(q_t) - f(q^*) \leq \frac{2\|q_{\tau}+\frac{\tau}{2}p_{\tau} -  q^*\|^2+\tau^2\left(f(q_{\tau}) - f(q^*)\right)}{t^2}.
\end{align*}
\end{theorem}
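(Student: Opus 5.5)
The plan is to realise Nesterov's ODE \eqref{eq:N} as the coupled game dynamics \eqref{eq:game_Rd_N1}--\eqref{eq:game_Rd_N2} under the Nesterov schedule \eqref{Nesterov-Schedule}, $A_t = t^2/4$ and $\alpha_t = t/2$ with $A_\tau = \tau^2/4$. Theorem \ref{theorem:convergence_game_N} then gives the bound after translating the initial data. Concretely, I would set $\bar{x}_\tau := q_\tau$ and $x_\tau := q_\tau + \frac{A_\tau}{\alpha_\tau} p_\tau = q_\tau + \frac{\tau}{2}p_\tau$. I would define $(\bar{x}_t, x_t, y_t)$ as the solution of \eqref{eq:game_Rd}, with $\bar{x}_t$ given by \eqref{eq:X bar complicated}, and put $q_t := \bar{x}_t$ and $p_t := \frac{\alpha_t}{A_t}(x_t - \bar{x}_t) = \frac{2}{t}(x_t - \bar{x}_t)$. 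This is the deterministic analogue of \eqref{eq:QP-X_c}.

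The first step is to check that $(q_t,p_t)$ solves \eqref{eq:N}. Differentiating \eqref{eq:X bar complicated} gives $\dot{\bar{x}}_t = \frac{\alpha_t}{A_t}(x_t - \bar{x}_t) = p_t$, so $\dot q_t = p_t$. For the velocity, the product rule gives
\begin{align*}
\dot p_t = \frac{\dd}{\dd t}\Big(\frac{2}{t}\Big)(x_t-\bar x_t) + \frac{2}{t}\big(\dot x_t - \dot{\bar x}_t\big) = -\frac{1}{t}p_t + \frac{2}{t}\Big(-\frac{t}{2}\nabla f(q_t) - p_t\Big) = -\nabla f(q_t) - \frac{3}{t}p_t .
\end{align*}
This is exactly the noiseless version of the computation in the proof of Theorem \ref{theorem:UL_convex}. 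At $t=\tau$ the initial data agree by construction. Since \eqref{eq:N} and \eqref{eq:game_Rd} are in bijective correspondence through this linear change of variables, uniqueness of solutions identifies the two trajectories. If uniqueness is in doubt because $\nabla f$ is only continuous, I would start instead from a given solution $(q_t,p_t)$ of \eqref{eq:N}, define $x_t := q_t + \frac{t}{2}p_t$, and verify directly that $\dot x_t = -\frac{t}{2}\nabla f(q_t)$. I would also verify that $q_t$ coincides with the weighted average $\bar x_t$: both $q_t$ and $\bar x_t$ satisfy the linear ODE $\dot z = \frac{\alpha_t}{A_t}(x_t - z)$ with the same initial value, and that linear ODE has a unique solution. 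This route avoids any uniqueness assumption on \eqref{eq:N} itself.

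With the identification in place, Theorem \ref{theorem:convergence_game_N} yields
\[
f(q_t) - f(q^*) \le \frac{\frac12\|q_\tau + \frac{\tau}{2}p_\tau - q^*\|^2 + \frac{\tau^2}{4}(f(q_\tau)-f(q^*))}{t^2/4},
\]
and multiplying numerator and denominator by $4$ gives the stated bound. The only step needing care is the identification of the ODE with the game dynamics. I expect it to be the main, though mild, obstacle: one must make sure the $A_\tau$-augmented average is consistent with $\bar x_\tau = q_\tau$, and choose the direction of the argument so that no extra well-posedness assumption on \eqref{eq:N} is needed. The second route above handles both points.
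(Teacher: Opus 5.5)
Your proposal is correct and matches the paper's proof: the paper also applies Theorem \ref{theorem:convergence_game_N} with the Nesterov schedule $A_t = t^2/4$, $\alpha_t = t/2$, identifies $q_t = \bar{x}_t$ and $p_t = \frac{\alpha_t}{A_t}(x_t - q_t)$, and checks that $\dot p_t = -\nabla f(q_t) - \frac{3}{t}p_t$. Your second route, which starts from a solution of \eqref{eq:N}, sets $x_t = q_t + \frac{t}{2}p_t$, and needs uniqueness only for the linear averaging ODE, is a small but useful tightening of the direction of the identification, which the paper leaves implicit.
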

\begin{proof}
The idea is to use Theorem \ref{theorem:convergence_game_N} with the weighting $A_t = \frac{t^2}{4}$, $\alpha_t = \frac{t}{2}$. Thus, it only remains to verify that the corresponding game dynamics \eqref{eq:game_Rd_N1}-\eqref{eq:game_Rd_N2} are equivalent to \eqref{eq:N}. First, set $q_t = \bar{x}_t$. Since $\dot{q}_t = p_t$, it follows that $p_t = \frac{\alpha_{t}}{A_t}(x_t - q_t)$. Plugging in the x- and y-players' strategies, we obtain
\begin{align*}
    \dot{p}_t & = \big(\frac{\alpha_t}{A_t}\big)'(x_t - q_t) + \frac{\alpha_t}{A_t}(- \alpha_t \nabla f(q_t) - p_t) \\
    & = \big(\big(\frac{\alpha_t}{A_t}\big)'\frac{A_t}{\alpha_t}-\frac{\alpha_t}{A_t} ) p_t - \frac{\alpha_t^2}{A_t}\nabla f(q_t).
\end{align*}
With the choice $A_t = \frac{t^2}{4}$, $\alpha_t = \frac{t}{2}$ this indeed yields
\begin{align*}
    \dot{p}_t = - \nabla f(q_t) - \frac{3}{t}p_t,
\end{align*}
and the claim follows.
\end{proof}

\begin{remark}
We remark that the above regret-based analysis is closely related to the usual Lyapunov function for Nesterov's ODE, given by
\begin{align*}
    \psi_t = \frac{t^2}{4}(f(q_t) - f(x^*)) + \frac{1}{2}\|q_t + \frac{t}{2}p_t - x^*\|^2 = \frac{t^2}{4}(f(\bar{x}_t) - f(x^*)) + \frac{1}{2}\|x_t - x^*\|^2, 
\end{align*}
see \cite{NesterovODE}. The above presentation essentially builds up this Lyapunov function in a structured way and, in doing so, offers a novel interpretation of Nesterov's ODE as a combination of game strategies. Moreover, it motivates extending this reasoning from the task of minimisation to that of sampling.
\end{remark}

\begin{remark}
    We highlight the difference between the minimisation game and the sampling game in terms of the cost of the x-player, see \eqref{conv:x_regret} and \eqref{eq:RegX Rd}. In the sampling game, she has to pay the differential entropy for her averaged actions, which forces her to stochasticise the deterministic optimisation strategy.
    \end{remark}

\subsubsection{Polyak's ODE}\label{sec:polyak}

We continue by elaborating how Polyak's ODE \eqref{eq:polyak_ODE} can likewise be regarded as coupled strategies for a strongly convex optimisation game. Based on this, we will confirm the objective-value convergence \eqref{eq:conv_Polyak} in Theorem \ref{theorem:convergence_P}. Recall that Polyak's ODE is given by
\begin{align*}
    \ddot {q}_t = - \nabla f(q_t) - 2\sqrt\sigma \dot{q}_t,
\end{align*}
with $(q_0,\dot{q}_0) \in \mathbb{R}^d \times \mathbb{R}^d$, or equivalently, 
\begin{subequations}\label{eq:P}
    \begin{align}
    \dot{q}_t & = p_t \label{eq:P_1}\\ 
    \dot{p}_t & = - \nabla f(q_t) - 2\sqrt{\sigma}p_t\label{eq:P_2},
\end{align}
\end{subequations}
with $(q_0, p_0) \in \mathbb{R}^d \times \mathbb{R}^d$. 
Here, by introducing the convex function $\tilde{f}(x) = f(x) - \frac{\sigma}{2}\|x\|^2$, we replace the minimisation problem 
\begin{align*}
    \min_{x \in \mathbb{R}^d} f(x).
\end{align*}
by the saddle point problem
\begin{align}\label{eq:sp_sc}
    \min_{x \in \mathbb{R}^d} \max_{y \in \mathbb{R}^d}\,\langle x,y\rangle - \tilde{f}^*(y) + \frac{\sigma}{2}\|x\|^2.
\end{align}

As in the previous section, we now define appropriate regrets of the x- and y-players, which will allow us to quantify the sub-optimality in terms of $f$ in Proposition \ref{theorem:sub-optimality_P}. Starting the game at $\tau = 0$, we prescribe a weighting function $\alpha$ and an initial weight $A_0 > 0$ to obtain the integrated weight $A_t$ as given in Definition \ref{def:weights}. We denote by $\bar{x}_t$ the accordingly weighted variable \eqref{eq:X bar complicated}. We now define the regrets of the strongly convex game, based on the adapted payoff function in \eqref{eq:sp_sc}. 

\begin{definition}[Regrets for strongly convex minimisation]\label{def:regrets_p}
For $x^* \in \text{argmin}_{x \in \mathbb{R}^d} f(x)$ and for played actions $\{(x_s,y_s)\}_{0 \leq s\leq t}$, we define the regrets at time $t\geq 0$ as
\begin{align*}
 \operatorname{Reg}_t^x & =  \int_{0}^t \langle x_s, y_s\rangle\alpha_s\dd s + \frac{\sigma A_t}{2}\|\bar{x}_t\|^2 - \frac{\sigma A_0}{2}\|\bar{x}_0\|^2 - \int_{0}^t \langle x^*, y_s\rangle\alpha_s \dd s -\frac{\sigma (A_t-A_0)}{2}\|x^*\|^2, \\
  \operatorname{Reg}_t^y & = A_{0}(\tilde{f}^*(y_{0})-\langle \bar{x}_{0},y_{0}\rangle) + \int_{0}^t (\tilde{f}^*(y_s) - \langle x_s, y_s\rangle) \alpha_s \dd s\\
  & \;\;\; - \min_{y \in \mathbb{R}^d}\big\{A_{0}(\tilde{f}^*(y)-\langle \bar{x}_0,y\rangle)+\int_{0}^t (\tilde{f}^*(y)-\langle x_s, y\rangle)\alpha_s \dd s\big\}.
\end{align*}
\end{definition}
Note that we have $\operatorname{Reg}_0^x = 0$ by construction. The following theorem establishes that these regrets, too, can be used to assess the sub-optimality of $\bar{x}_t$ in terms of $f$.
\begin{theorem}\label{theorem:sub-optimality_P}
The regrets from Definition \ref{def:regrets_p} satisfy 
\begin{align*}
    f(\bar{x}_t) - f(x^*) \leq \frac{\operatorname{Reg}_t^x + \operatorname{Reg}_t^y}{A_t} + \frac{A_{0}}{A_t}\left(f(\bar{x}_{0}) - f(x^*)\right),\qquad t \geq 0.
\end{align*}
\end{theorem}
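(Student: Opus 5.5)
The plan is to mirror the proof of Proposition \ref{theorem:sub-optimality_N}, keeping track of the extra quadratic terms that the strongly convex splitting assigns to the x-player. First I add the two regrets from Definition \ref{def:regrets_p}. The bilinear terms $\int_0^t\langle x_s,y_s\rangle\alpha_s\dd s$ cancel, and what remains is
\begin{align*}
\operatorname{Reg}_t^x+\operatorname{Reg}_t^y
&= -\int_0^t\big(\langle x^*,y_s\rangle-\tilde f^*(y_s)\big)\alpha_s\dd s
+A_0\big(\tilde f^*(y_0)-\langle\bar x_0,y_0\rangle\big)
-\min_{y}\{\cdots\} \\
&\quad +\frac{\sigma A_t}{2}\|\bar x_t\|^2-\frac{\sigma A_0}{2}\|\bar x_0\|^2-\frac{\sigma(A_t-A_0)}{2}\|x^*\|^2 .
\end{align*}

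Next I bound each piece from below, all with respect to the convex function $\tilde f$.
\begin{enumerate}
\item[(i)] By \eqref{eq:X bar complicated}, the minimisation collapses to $\min_y A_t(\tilde f^*(y)-\langle\bar x_t,y\rangle)$. Since $\tilde f$ is convex and $C^1$, the biconjugate identity \eqref{eq:biconjugate_id} gives that this equals $-A_t\tilde f(\bar x_t)$. The minus sign in front therefore produces $+A_t\tilde f(\bar x_t)$.
\item[(ii)] Fenchel--Young \eqref{eq:Fenchel IE} yields $A_0(\tilde f^*(y_0)-\langle\bar x_0,y_0\rangle)\ge -A_0\tilde f(\bar x_0)$.
\item[(iii)] Pointwise, \eqref{eq:biconjugate_id} gives $\langle x^*,y_s\rangle-\tilde f^*(y_s)\le \tilde f(x^*)$. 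Integrating, the first integral is at least $-(A_t-A_0)\tilde f(x^*)$.
\end{enumerate}

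Finally I combine these bounds with the quadratic terms, using $f=\tilde f+\frac{\sigma}{2}\|\cdot\|^2$. This gives
\begin{align*}
\operatorname{Reg}_t^x+\operatorname{Reg}_t^y \ge A_t f(\bar x_t)-A_0 f(\bar x_0)-(A_t-A_0)f(x^*)
= A_t\big(f(\bar x_t)-f(x^*)\big)-A_0\big(f(\bar x_0)-f(x^*)\big).
\end{align*}
Dividing by $A_t>0$ and rearranging gives the claim.

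The only point needing care is the bookkeeping. The $\frac{\sigma}{2}\|\cdot\|^2$ terms in $\operatorname{Reg}_t^x$ must pair exactly with the $\tilde f$ terms evaluated at the same three points $\bar x_t$, $\bar x_0$, $x^*$, with the same weights $A_t$, $A_0$, $A_t-A_0$. Definition \ref{def:regrets_p} is set up precisely so that this happens, so I expect no real obstacle.
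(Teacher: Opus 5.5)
Your proposal is correct and follows the paper's route. The paper's proof just says it is analogous to Proposition \ref{theorem:sub-optimality_N}: add the regrets, collapse the hindsight minimum to $-A_t\tilde f(\bar x_t)$ by biconjugation, and apply Fenchel--Young at time $0$ and the biconjugate bound at $x^*$. Recombining these with the $\frac{\sigma}{2}\|\cdot\|^2$ terms via $f=\tilde f+\frac{\sigma}{2}\|\cdot\|^2$ is exactly the bookkeeping you spell out.
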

\begin{proof}
The proof is analogous to that of Proposition \ref{theorem:sub-optimality_N}.
\end{proof}

Let $(\bar{x}_{0}, x_{0}) \in \mathbb{R}^d \times \mathbb{R}^d$ be given. We now analyse the regrets of the following strategies:
\begin{subequations}
\label{eq:game_Rd_P}
    \begin{align}
    y_t & = \nabla \tilde{f}(\bar{x}_t),  \label{eq:game_Rd_P1}\\
    \frac{d x_t}{d t} & = -\beta_t (y_t + \sigma x_t) \label{eq:game_Rd_P2},
\end{align}
\end{subequations}
for $\beta_t = \frac{A_t}{\alpha_t}$ and $t \geq 0$. Again, the y-player plays the follow-the-leader strategy, but on the convex function $\tilde{f}$ instead of the strongly convex function $f$. The x-player plays an online gradient flow on the function $x \mapsto \langle x, y_t\rangle + \frac{\sigma}{2}\|x\|^2$. Since \eqref{eq:game_Rd_P1} coincides with the strategy in \eqref{eq:game_Rd_N1}, where $f$ was assumed to be convex, its regret vanishes again, see Proposition \ref{prop:y-player_P} below. In particular, this motivates attributing the strongly convex term $\frac{\sigma}{2}\|x\|^2$ to the x-player in \eqref{eq:sp_sc}: attributing it to the y-player would not improve the regret estimate. By contrast, making the x-player pay $\frac{\sigma}{2}\|x\|^2$ directly translates to a curvature-informed gradient flow \eqref{eq:game_Rd_P2}, whose regret is given in Proposition \ref{prop:x-player_P}.
\begin{proposition}[Regret bound for the y-player]\label{prop:y-player_P}
Fix $t \geq 0$. The strategy
\begin{align}\label{FTL_P}
    y_s = \nabla \tilde{f}(\bar{x}_{s}),\quad s \in [0,t]
\end{align}
leads to zero regret of the y-player:
\begin{align}\label{FTL:no_regret_P}
\operatorname{Reg}_t^{y} = 0.
\end{align} \end{proposition}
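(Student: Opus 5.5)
This is the deterministic, $\tau=0$ analogue of Proposition \ref{prop:y-player_N}, with $f$ replaced by the convex function $\tilde f$. I will follow that proof line by line.

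The first step is to evaluate the competitor term. By Definition \ref{def:weights} with $\tau=0$ we have $A_0\bar x_0+\int_0^t x_s\alpha_s\dd s=A_t\bar x_t$, and therefore
\begin{align*}
\min_{y \in \mathbb{R}^d}\Big\{A_{0}(\tilde{f}^*(y)-\langle \bar{x}_0,y\rangle)+\int_{0}^t (\tilde{f}^*(y)-\langle x_s, y\rangle)\alpha_s \dd s\Big\} = -A_t\max_{y}\{\langle \bar x_t,y\rangle-\tilde f^*(y)\} = -A_t\tilde f(\bar x_t).
\end{align*}
The last equality is the biconjugate identity \eqref{eq:biconjugate_id}, applied to $\tilde f$. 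This is legitimate because $\tilde f$ is convex and $C^1$, hence closed.

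The second step handles the initial term. Since $y_0=\nabla\tilde f(\bar x_0)$, the Fenchel--Young equality \eqref{FY_Eq} for $\tilde f$ gives
\begin{align*}
A_0(\tilde f^*(y_0)-\langle\bar x_0,y_0\rangle)=-A_0\tilde f(\bar x_0).
\end{align*}
The regret therefore reduces to
\begin{align*}
\operatorname{Reg}_t^y=A_t\tilde f(\bar x_t)-A_0\tilde f(\bar x_0)+\int_0^t(\tilde f^*(y_s)-\langle x_s,y_s\rangle)\alpha_s\dd s.
\end{align*}

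The third step shows this is zero by identifying the integrand as an exact derivative. Differentiating the weighted average yields $\dot{\bar x}_s=\frac{\alpha_s}{A_s}(x_s-\bar x_s)$ and $\dot A_s=\alpha_s$. By the chain rule,
\begin{align*}
\frac{\dd}{\dd s}\big(A_s\tilde f(\bar x_s)\big)=\alpha_s\big(\tilde f(\bar x_s)+\langle\nabla\tilde f(\bar x_s),x_s-\bar x_s\rangle\big).
\end{align*}
Using \eqref{FY_Eq} again, in the form $\tilde f(\bar x_s)-\langle\nabla\tilde f(\bar x_s),\bar x_s\rangle=-\tilde f^*(y_s)$, the right-hand side equals $-\alpha_s(\tilde f^*(y_s)-\langle x_s,y_s\rangle)$. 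Integrating from $0$ to $t$ cancels the remaining terms exactly, so $\operatorname{Reg}_t^y=0$.

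None of these steps is a genuine obstacle. The only points needing care are that \eqref{eq:biconjugate_id} and \eqref{FY_Eq} are used for $\tilde f$ rather than $f$, which requires convexity of $\tilde f$, guaranteed by $\sigma$-strong convexity of $f$. One also needs enough regularity that $s\mapsto\tilde f(\bar x_s)$ is $C^1$. This holds since $f\in C^1$ and $\bar x_s$ is $C^1$ whenever $x_s$ is continuous.
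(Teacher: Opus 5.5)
Your proposal is correct and matches the paper's proof, which simply says to repeat the argument for Proposition \ref{prop:y-player_N} with $f$ replaced by $\tilde f$. You carry this out with $\tau=0$: the biconjugate identity for the competitor term, the Fenchel--Young equality for the initial term, and the exact-derivative identity for $A_s\tilde f(\bar x_s)$, with the needed convexity and $C^1$ regularity of $\tilde f$ correctly noted.
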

\begin{proof}
The proof is analogous to that of Proposition  \ref{prop:y-player_N}, replacing $f$ by $\tilde{f}$.
\end{proof}
\begin{proposition}[Regret bound for the x-player]\label{prop:x-player_P}
Fix $t \geq 0$. The solution to \eqref{eq:game_Rd_P2} satisfies
\begin{align}\label{eq:x_player_reg_P}
     \operatorname{Reg}_t^x \leq \frac{\alpha^2_0}{2A_0}\|x_{0}-x^*\|^2 + \frac{1}{2} \int_0^t \left(\frac{1}{\alpha_s}\frac{\dd}{\dd s}\big(\frac{\alpha_s^2}{A_s} \big) - \sigma \right)\|x_s - x^*\|^2\,\alpha_s\dd s.
\end{align}
\end{proposition}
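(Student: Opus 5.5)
The proof will follow the template of Proposition \ref{prop:x-player_N}, using a time-weighted distance energy. Define
\begin{align*}
    \psi_t := \frac{\alpha_t^2}{2A_t}\|x_t - x^*\|^2 = \frac{A_t}{2\beta_t^2}\|x_t-x^*\|^2 ,
\end{align*}
which is the deterministic analogue of the quadratic part of \eqref{eq:lyapunov_sc_2}, with $\beta_t = A_t/\alpha_t$. Its initial value $\psi_0 = \frac{\alpha_0^2}{2A_0}\|x_0-x^*\|^2$ is exactly the constant appearing in \eqref{eq:x_player_reg_P}. The plan is to compute $\frac{\dd}{\dd t}(\psi_t + \operatorname{Reg}_t^x)$ and show that it is bounded above by the integrand $\frac12\big(\frac{1}{\alpha_t}\frac{\dd}{\dd t}(\frac{\alpha_t^2}{A_t})-\sigma\big)\|x_t-x^*\|^2\alpha_t$. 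Integrating from $0$ to $t$ and using $\operatorname{Reg}_0^x=0$ together with $\psi_t\ge 0$ then gives the claim.

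\textbf{Step 1: the energy.} Differentiating $\psi_t$ and inserting \eqref{eq:game_Rd_P2}, together with $\frac{\alpha_t^2}{A_t}\beta_t = \alpha_t$, gives
\begin{align*}
    \frac{\dd}{\dd t}\psi_t = \frac12 \frac{\dd}{\dd t}\Big(\frac{\alpha_t^2}{A_t}\Big)\|x_t-x^*\|^2 - \alpha_t\langle x_t - x^*, y_t + \sigma x_t\rangle .
\end{align*}

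\textbf{Step 2: the regret.} Differentiating Definition \ref{def:regrets_p} gives
\begin{align*}
\frac{\dd}{\dd t}\operatorname{Reg}^x_t = \alpha_t\langle x_t - x^*, y_t\rangle + \frac{\sigma}{2}\frac{\dd}{\dd t}\big(A_t\|\bar{x}_t\|^2\big) - \frac{\sigma\alpha_t}{2}\|x^*\|^2 .
\end{align*}
Using $\dot{\bar x}_t = \frac{\alpha_t}{A_t}(x_t-\bar x_t)$, exactly as in the computation preceding \eqref{eq:tech_2}, one obtains
\begin{align*}
\frac{\dd}{\dd t}\big(A_t\|\bar x_t\|^2\big) = \alpha_t\big(\|x_t\|^2 - \|x_t-\bar x_t\|^2\big).
\end{align*}

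\textbf{Step 3: combining.} When the two derivatives are added, the terms $\pm\alpha_t\langle x_t-x^*,y_t\rangle$ cancel. This cancellation is the point of the online-gradient-flow choice for the x-player. What remains of the $\sigma$-terms is
\begin{align*}
\sigma\alpha_t\Big(-\langle x_t-x^*,x_t\rangle + \tfrac12\|x_t\|^2 - \tfrac12\|x^*\|^2\Big) - \frac{\sigma\alpha_t}{2}\|x_t-\bar x_t\|^2 .
\end{align*}
The bracket equals $-\frac12\|x_t-x^*\|^2$ by completing the square. Hence
\begin{align*}
\frac{\dd}{\dd t}(\psi_t+\operatorname{Reg}_t^x) = \frac12\Big(\frac{1}{\alpha_t}\frac{\dd}{\dd t}\Big(\frac{\alpha_t^2}{A_t}\Big) - \sigma\Big)\|x_t-x^*\|^2\alpha_t - \frac{\sigma\alpha_t}{2}\|x_t-\bar x_t\|^2 .
\end{align*}
The last term is nonpositive and can be dropped. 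Integrating over $[0,t]$ then finishes the proof.

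The only delicate point is Step 3. There one must recognise that the quadratic cost $\frac{\sigma}{2}\|\bar x_t\|^2$ charged to the x-player and the curvature drift $-\beta_t\sigma x_t$ combine into the clean term $-\frac{\sigma}{2}\|x_t-x^*\|^2\alpha_t$, plus a nonpositive remainder. All other steps are routine differentiation.
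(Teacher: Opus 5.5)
Your proof is correct and follows the paper's argument essentially verbatim. It uses the same energy $\psi_t = \frac{A_t}{2\beta_t^2}\|x_t-x^*\|^2$, the same identities for $\langle x_t-x^*,x_t\rangle$ and $\frac{\dd}{\dd t}(A_t\|\bar x_t\|^2)$, drops the same nonpositive term $-\frac{\sigma\alpha_t}{2}\|x_t-\bar x_t\|^2$, and concludes with $\operatorname{Reg}_0^x=0$ and $\psi_t\ge 0$; the only cosmetic difference is that you add the two derivatives directly, whereas the paper rewrites the curvature term inside $\frac{\dd}{\dd t}\psi_t$ as part of $-\frac{\dd}{\dd t}\operatorname{Reg}_t^x$.
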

\begin{proof}
Differentiating the energy function $\psi_t = \frac{A_t}{2\beta_t^2}\|x_t-x^*\|^2$ yields
\begin{align*}
    \frac{\dd}{\dd t}\psi_t & = \big(\frac{A_t}{2\beta_t^2} \big)'\|x_t - x^*\|^2 + \frac{A_t}{\beta_t^2}\langle x_t - x^*, \dot{x}_t\rangle \\
    & = \big(\frac{A_t}{2\beta_t^2} \big)'\|x_t - x^*\|^2 - \frac{A_t}{\beta_t}\langle x_t - x^*, y_t + \sigma x_t \rangle.
\end{align*}
Using that
\begin{align*}
    \langle x_t - x^*, x_t\rangle = \frac{1}{2}\big(\|x_t\|^2 + \|x_t-x^*\|^2 - \|x^*\|^2\big)
\end{align*}
and
\begin{align*}
    \frac{1}{\alpha_t}\frac{\dd}{\dd t}(\frac{A_t}{2}\|\bar{x}_t\|^2) = \frac{1}{2} \|\bar{x}_t\|^2 +  \langle \bar{x}_t, x_t - \bar{x}_t\rangle  = \frac{1}{2} \|x_t\|^2 -  \frac{1}{2}\|\bar{x}_t - x_t\|^2,
\end{align*}
allows to rewrite 
\begin{align*}
    -\frac{A_t \sigma}{\beta_t} \langle x_t - x^*, x_t \rangle & = -\frac{A_t \sigma}{2\beta_t}\|x_t - x^*\|^2  + \frac{A_t \sigma}{2\beta_t}\|x^*\|^2 - \frac{A_t \sigma}{2\beta_t}\|x_t\|^2 \\
    & = -\frac{A_t \sigma}{2\beta_t}\|x_t - x^*\|^2  + \frac{A_t \sigma}{2\beta_t}\|x^*\|^2 - \frac{A_t \sigma}{\beta_t}\big(\frac{1}{\alpha_t}\frac{\dd}{\dd t}(\frac{A_t}{2}\|\bar{x}_t\|^2)+ \frac{1}{2}\|\bar{x}_t - x_t\|^2 \big) \\
    & = -\frac{A_t \sigma}{2\beta_t}\big(\|x_t - x^*\|^2 + \|\bar{x}_t - x_t\|^2\big) - \frac{A_t}{\alpha_t\beta_t}\frac{\dd}{\dd t}(\frac{A_t\sigma }{2}\|\bar{x}_t\|^2-\frac{A_t\sigma}{2}\|x^*\|^2) \\
    & \leq -\frac{A_t \sigma}{2\beta_t}\|x_t - x^*\|^2 - \frac{A_t}{\alpha_t\beta_t}\frac{\dd}{\dd t}(\frac{A_t\sigma }{2}\|\bar{x}_t\|^2-\frac{A_t\sigma}{2}\|x^*\|^2) 
\end{align*}
Thus, it holds that
\begin{align*}
    \frac{\dd}{\dd t}\psi_t & \leq \Big(\big(\frac{A_t}{2\beta_t^2} \big)' - \frac{A_t \sigma}{2\beta_t}\Big) \|x_t - x^*\|^2 - \frac{A_t}{\beta_t}\langle x_t - x^*, y_t \rangle - \frac{A_t}{\alpha_t\beta_t}\frac{\dd}{\dd t}(\frac{A_t\sigma }{2}\|\bar{x}_t\|^2-\frac{A_t\sigma}{2}\|x^*\|^2) \\
    & = \Big(\big(\frac{\alpha_t^2}{2A_t} \big)' - \frac{\alpha_t \sigma}{2}\Big) \|x_t - x^*\|^2 - \alpha_t \langle x_t - x^*, y_t \rangle - \frac{\dd}{\dd t}(\frac{A_t\sigma }{2}\|\bar{x}_t\|^2-\frac{A_t\sigma}{2}\|x^*\|^2)  \\
    & = \Big(\big(\frac{\alpha_t^2}{2A_t} \big)' - \frac{\alpha_t \sigma}{2}\Big) \|x_t - x^*\|^2 - \frac{\dd}{\dd t} \operatorname{Reg}_t^x,
\end{align*}
where we used $\beta_t = \frac{A_t}{\alpha_t}$ to simplify the expressions. Integration from $0$ to $t$ and rearranging yields the regret estimate.
\end{proof}
Based on the preceding considerations, the coupled game dynamics \eqref{eq:game_Rd_P} satisfy the following optimality estimate:
\begin{theorem}\label{theorem:convergence_game_P}
Fix $t \geq 0$. The combined strategies \eqref{eq:game_Rd_P1}-\eqref{eq:game_Rd_P2} yield
\begin{align}\label{eq:xplayer_suboptimality_P}
    f(\bar{x}_t) - f(x^*) \leq  \frac{A_0}{2\beta^2_0A_t}\|x_{0}-x^*\|^2 + \frac{A_{0}}{A_t}\left(f(\bar{x}_{0}) - f(x^*)\right)+\frac{1}{2A_t} \int_0^t \left(\frac{1}{\alpha_s}\frac{\dd}{\dd s}\big(\frac{\alpha_s^2}{A_s} \big) - \sigma \right)\|x_s - x^*\|^2\,\alpha_s\dd s.
\end{align}
\end{theorem}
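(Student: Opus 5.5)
This follows the pattern of Theorem \ref{theorem:convergence_game_N}: insert the two regret bounds into the regret-to-suboptimality reduction. Concretely, I would start from Theorem \ref{theorem:sub-optimality_P}, which gives
\begin{align*}
f(\bar{x}_t) - f(x^*) \leq \frac{\operatorname{Reg}_t^x + \operatorname{Reg}_t^y}{A_t} + \frac{A_0}{A_t}\left(f(\bar{x}_0) - f(x^*)\right).
\end{align*}
The strategies in \eqref{eq:game_Rd_P1}--\eqref{eq:game_Rd_P2} are exactly those analysed in Propositions \ref{prop:y-player_P} and \ref{prop:x-player_P}, so both results apply directly.

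By Proposition \ref{prop:y-player_P}, the follow-the-leader strategy on the convex function $\tilde f$ gives $\operatorname{Reg}_t^y = 0$. Proposition \ref{prop:x-player_P} then bounds $\operatorname{Reg}_t^x$ by
\begin{align*}
\frac{\alpha_0^2}{2A_0}\|x_0 - x^*\|^2 + \frac{1}{2}\int_0^t \left(\frac{1}{\alpha_s}\frac{\dd}{\dd s}\Big(\frac{\alpha_s^2}{A_s}\Big) - \sigma\right)\|x_s - x^*\|^2\,\alpha_s \dd s.
\end{align*}
After substituting these bounds and dividing by $A_t$, the only remaining task is to rewrite the prefactor of the initial-distance term. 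Since $\beta_0 = A_0/\alpha_0$, we have $\alpha_0^2/A_0 = A_0/\beta_0^2$. Hence $\frac{\alpha_0^2}{2A_0 A_t} = \frac{A_0}{2\beta_0^2 A_t}$, and this is exactly the coefficient in \eqref{eq:xplayer_suboptimality_P}.

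I do not expect any real obstacle. All the analytical work is already in Proposition \ref{prop:x-player_P}. The one step to check is that no sign condition is needed on the integral term. It is carried through unchanged, possibly negative, into \eqref{eq:xplayer_suboptimality_P}, and dividing by $A_t>0$ preserves the inequality.
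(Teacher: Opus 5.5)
Your proposal is correct and follows the paper's proof exactly: insert $\operatorname{Reg}_t^y = 0$ from Proposition \ref{prop:y-player_P} and the bound of Proposition \ref{prop:x-player_P} into Theorem \ref{theorem:sub-optimality_P}, then divide by $A_t > 0$. Your identity $\alpha_0^2/A_0 = A_0/\beta_0^2$, from $\beta_0 = A_0/\alpha_0$, is the one bookkeeping step the paper leaves implicit.
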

\begin{proof}
Apply Propositions \ref{prop:y-player_P} and \ref{prop:x-player_P} to Proposition \ref{theorem:sub-optimality_P}.
\end{proof}
\begin{remark}\label{rem:schedule_P}
We can observe that specific weighting choices lead to a smaller bound on the regret of the x-player in \eqref{eq:x_player_reg_P} and thus on the sub-optimality in \eqref{eq:xplayer_suboptimality_P}: If
\begin{align*}
    \frac{\dd}{\dd s}\big(\frac{A_s}{\beta_s^2} \big) = \frac{\dd}{\dd s}\big(\frac{\alpha_s^2}{A_s} \big) = \alpha_s \sigma, 
\end{align*}
the integrated quadratic contributions vanish. If $\sigma > 0$, this suggests $\alpha_s = \sqrt{\sigma}e^{\sqrt{\sigma}s}$, $A_0 = 1$, and thus by Definition \ref{def:weights} $A_s = e^{\sqrt{\sigma}s}$.
\end{remark}
We conclude this section by noting that Polyak's ODE \eqref{eq:P} and the game dynamics \eqref{eq:game_Rd_P} are related by a change of variables, allowing us to establish the following convergence result: 
\begin{theorem}\label{theorem:convergence_P}
For $q^* \in \text{argmin}_{q \in \mathbb{R}^d} f(q)$, the solution to Polyak's ODE \eqref{eq:P_1}-\eqref{eq:P_2} satisfies
\begin{align*}
    f(q_t) - f(q^*) \leq e^{-\sqrt{\sigma}t}\left( \frac{\sigma}{2}\|q_{0} + \frac{1}{\sqrt{\sigma}}p_0 - q^*\|^2 + f(q_0) - f(q^*)\right).
\end{align*}
\end{theorem}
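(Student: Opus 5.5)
We follow the proof of Theorem \ref{theorem:convergence_N}: first fix the Polyak schedule \eqref{Polyak-Schedule}, $A_t = e^{\sqrt{\sigma}t}$ and $\alpha_t = \sqrt{\sigma}e^{\sqrt{\sigma}t}$. Then $A_0 = 1$, $\beta_t = A_t/\alpha_t \equiv 1/\sqrt{\sigma}$, and $\dot A_t = \alpha_t$, so Definition \ref{def:weights} holds. Next we check that the integral term in Theorem \ref{theorem:convergence_game_P} vanishes. Since $\alpha_s^2/A_s = \sigma e^{\sqrt{\sigma}s}$, we get
\[
\frac{1}{\alpha_s}\frac{\dd}{\dd s}\Big(\frac{\alpha_s^2}{A_s}\Big) = \frac{\sigma^{3/2}e^{\sqrt{\sigma}s}}{\sqrt{\sigma}e^{\sqrt{\sigma}s}} = \sigma,
\]
so the integrand in \eqref{eq:xplayer_suboptimality_P} is zero; this is exactly the choice suggested in Remark \ref{rem:schedule_P}. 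Theorem \ref{theorem:convergence_game_P} then gives
\[
f(\bar{x}_t) - f(x^*) \le e^{-\sqrt{\sigma}t}\Big(\frac{\sigma}{2}\|x_0 - x^*\|^2 + f(\bar{x}_0) - f(x^*)\Big),
\]
using $A_0/(2\beta_0^2) = \sigma/2$.

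The second step identifies the game dynamics \eqref{eq:game_Rd_P} with Polyak's ODE \eqref{eq:P}. Set $q_t = \bar{x}_t$ and $p_t = \frac{\alpha_t}{A_t}(x_t - \bar{x}_t) = \sqrt{\sigma}(x_t - q_t)$. Then $\dot q_t = \frac{\alpha_t}{A_t}(x_t - \bar x_t) = p_t$ by \eqref{eq:X bar complicated}. Differentiating $p_t$ and using $\dot x_t = -\frac{1}{\sqrt\sigma}(\nabla\tilde f(q_t) + \sigma x_t)$ with $x_t = q_t + p_t/\sqrt{\sigma}$ gives
\[
\dot p_t = -\nabla \tilde f(q_t) - \sigma q_t - \sqrt{\sigma}p_t - \sqrt{\sigma}p_t = -\nabla f(q_t) - 2\sqrt{\sigma}p_t,
\]
because $\nabla f = \nabla\tilde f + \sigma\,\mathrm{id}$. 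This is \eqref{eq:P_2}. The step needing the most care is tracking the $-\sigma x_t$ contribution: it must split into $-\sigma q_t$, which combines with $\nabla\tilde f$ into $\nabla f$, and an extra $-\sqrt\sigma p_t$ that doubles the damping to $2\sqrt\sigma$.

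Finally, since the game system and Polyak's ODE are both well-posed ODEs, given $(q_0,p_0)$ we choose $\bar{x}_0 = q_0$ and $x_0 = q_0 + p_0/\sqrt{\sigma}$. The transformed game trajectory then solves \eqref{eq:P} with the same initial data, so by uniqueness it coincides with the Polyak solution. Substituting into the bound above gives $\|x_0 - x^*\|^2 = \|q_0 + \frac{1}{\sqrt\sigma}p_0 - q^*\|^2$ and $f(\bar x_0) = f(q_0)$, which is the claim with $x^* = q^*$. We regard uniqueness as routine, granted that $\nabla f$ is locally Lipschitz or that the solution is otherwise assumed to exist uniquely, as is implicit in the statement.
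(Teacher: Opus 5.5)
Your proof is correct and follows the paper's argument. You fix the Polyak schedule so that the integral term in Theorem \ref{theorem:convergence_game_P} vanishes (as in Remark \ref{rem:schedule_P}), then identify the game dynamics with Polyak's ODE via $q_t=\bar x_t$, $p_t=\sqrt{\sigma}(x_t-q_t)$, exactly as the paper does. Your appeal to uniqueness is not needed: the inverse linear map $\bar x_t=q_t$, $x_t=q_t+p_t/\sqrt{\sigma}$ sends any given Polyak solution to a trajectory of \eqref{eq:game_Rd_P}, and the game bound applies to it directly.
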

\begin{proof}
To use Theorem \ref{theorem:convergence_game_P}, we need to establish which game strategies of the general form \eqref{eq:game_Rd_P1}-\eqref{eq:game_Rd_P2} correspond to Polyak's ODE \eqref{eq:P}. Choose the schedule $A_t = e^{\sqrt{\sigma}t}$, $\alpha_t = \sqrt{\sigma}e^{\sqrt{\sigma}t}$. Then, \eqref{eq:game_Rd_P1}-\eqref{eq:game_Rd_P2} can be combined to  
\begin{align*}
    \frac{\dd x_t}{\dd t} & = -\frac{1}{\sqrt{\sigma}}(\nabla \tilde{f}(\bar{x}_t) + \sigma x_t) = -\frac{1}{\sqrt{\sigma}}(\nabla f(\bar{x}_t) + \sigma (x_t - \bar{x}_t)).
\end{align*}
With the change of variables $q_t = \bar{x}_t$ and $p_t = \sqrt{\sigma}(x_t - q_t)$, this transforms to Polyak's ODE,
\begin{align*}
    \frac{\dd p_t}{\dd t} = - \nabla f(q_t) - 2\sqrt{\sigma}p_t. 
\end{align*}
Applying Theorem \ref{theorem:convergence_game_P} and considering Remark \ref{rem:schedule_P}, the claim follows.
\end{proof}

\subsection{Saddle point considerations}\label{sec:saddle_point}

The analysis of the sampling games above is motivated by the introduction of the formal saddle point problem 
\begin{align}\label{eq:saddle_point_appendix}
    \min_{X} \max_{Y} \mathcal{G}(X,Y),
\end{align}
where $X$ and $Y$ denote appropriately integrable $\mathbb{R}^d$-valued random variables and
\begin{align}\label{eq:G_appendix}
    \mathcal{G}(X,Y) := \mathbb{E}[\langle Y, X \rangle - V^*(Y)] + \operatorname{Ent}(X),
\end{align}
see Section \ref{sec:games}. A natural way of reading this is that the outer problem fixes the marginal distribution in $X$ (which is assumed to have a Lebesgue density), while the inner problem looks for random variables that are optimally coupled to $X$. However, considering the meaning of the reversed problem
\begin{align}\label{eq:sp_rev}
    \max_Y \min_X \mathcal{G}(X,Y),
\end{align}
shows that one has to be careful when expressing the problem purely for jointly defined random variables; exchanging the order of optimisation does not simply exchange the roles of the two marginals. Interpreting \eqref{eq:sp_rev} as meaning that the marginal distribution of $Y$ is determined in the outer problem, and that the inner problem looks for $X$ optimally coupled to $Y$, makes the inner problem unbounded, as can be seen by taking $X \perp Y$, with $X \sim \mathcal{N}(0,\varepsilon^2)$, $\varepsilon > 0$, and letting $\varepsilon \rightarrow \infty$. Thus, a more sensible problem formulation would be based on defining the payoff function directly for a probability measure $\rho$ and Markov kernel $K:x \mapsto K_x(\dd y)$ as 
\begin{align}\label{eq:sp_g}
    \mathcal{\widehat{G}}(\rho, K) = \int \int (\langle x, y\rangle - V^*(y))K_x(\dd y) \rho(\dd x) + \operatorname{Ent}(\rho).
\end{align}
However, for ease of presentation in this paper and a more explicit connection to the accelerated gradient flows on $\mathbb{R}^d$ mentioned above, we retain random variable formulations throughout. In this sense, we interpret a saddle point $(X^*, Y^*)$ of $\mathcal{G}$ in \eqref{eq:G_appendix} as induced by a saddle point $(\rho^*, K^*)$ of $\mathcal{\widehat{G}}$ in \eqref{eq:sp_g}, meaning $(X^*, Y^*) \sim \rho^*(\dd x)K^*(x,\dd y)$, where 
\begin{align}\label{eq:sp_def}
    \mathcal{\widehat{G}}(\rho^*,K) \leq \mathcal{\widehat{G}}(\rho^*,K^*) \leq \mathcal{\widehat{G}}(\rho,K^*),
\end{align}
for all admissible probability measures $\rho$ and Markov kernels $K$. By using the properties of the Fenchel conjugate \eqref{eq:biconjugate_id} and \eqref{gradient_maximises}, one can argue that the saddle point of $\mathcal{\widehat{G}}$, in the sense of \eqref{eq:sp_def}, is given by 
\begin{align*}
    (\rho^*, K^*)=(\frac{1}{Z}e^{-V(x)}\dd x, \delta_{\nabla V(x)}(dy)),
\end{align*}
implying that the corresponding saddle point of $\mathcal{G}$ is marginally distributed as those solving the sampling problem $\min_X \mathcal{F}(\operatorname{Law } X)$.

\subsection{Auxiliary lemmas}
\label{app:lemmas}

\begin{lemma}[Differential Entropy, Fisher-Information and Covariance Matrix under Rescaling and Shifting]\label{lemma:scaling_shifting}

Let $X$ be an $\mathbb{R}^d$-valued random variable with density $\rho_X$. For $\theta > 0$ and $a \in \mathbb{R}^d$, define 
    $U := \theta(X-a)$.
Then, denoting the density of $U$ by $\rho_U$, the differential entropies of $X$ and $U$ are related by
\begin{align}\label{entropy_scaled_shifted}
    \operatorname{Ent}(\rho_X) = \operatorname{Ent}(\rho_U) + \ln{\theta^{d}}.
\end{align}
Their Fisher information values are related as
\begin{align}\label{fi_scaled_shifted}
    \int_{\mathbb{R}^d} \| \nabla_x \ln{\rho_X}(x)\|^2\,d\rho_X(x) = \theta^2 \int_{\mathbb{R}^d} \|\nabla_u \ln{\rho_U}(u)\|^2\,d\rho_U(u).
\end{align}
Finally, the covariance matrix of $X$, $C_X$, and that of $U$, $C_U$, satisfy
\begin{align}\label{cov_scaled_shifted}
    C_X = \frac{1}{\theta^2} C_U.
\end{align}
The identities \eqref{entropy_scaled_shifted}, \eqref{fi_scaled_shifted}, and \eqref{cov_scaled_shifted} hold whenever the quantities appearing on either side are well-defined and finite.
\end{lemma}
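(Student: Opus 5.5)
The plan is to prove all three identities by the change-of-variables formula for the affine map $T(x)=\theta(x-a)$, whose inverse is $T^{-1}(u)=a+u/\theta$ and whose Jacobian determinant is $\theta^d$. First I write down the density of $U$: for measurable $B$, $\mathbb{P}(U\in B)=\int \mathbbm{1}_B(\theta(x-a))\rho_X(x)\dd x$. Substituting $u=\theta(x-a)$ gives
\begin{align*}
\rho_U(u)=\theta^{-d}\rho_X\!\left(a+\tfrac{u}{\theta}\right), \qquad \text{equivalently}\qquad \rho_X(x)=\theta^{d}\rho_U(\theta(x-a)).
\end{align*}
All three claims then reduce to pushing integrals through this substitution.

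For the entropy \eqref{entropy_scaled_shifted}, I insert the second form of the density:
\begin{align*}
\operatorname{Ent}(\rho_X)=\int \rho_X(x)\big(\ln\theta^d+\ln\rho_U(\theta(x-a))\big)\dd x .
\end{align*}
The first term gives $\ln\theta^d$ because $\rho_X$ integrates to one. In the second term, substituting $u=\theta(x-a)$ turns $\rho_X(x)\dd x$ into $\rho_U(u)\dd u$, so it equals $\operatorname{Ent}(\rho_U)$. The finiteness hypothesis is what lets me split the integral into these two pieces.

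For the Fisher information \eqref{fi_scaled_shifted}, I use the chain rule: $\nabla_x\ln\rho_X(x)=\theta\,(\nabla_u\ln\rho_U)(\theta(x-a))$. Squaring gives $\|\nabla_x\ln\rho_X(x)\|^2=\theta^2\|(\nabla_u\ln\rho_U)(\theta(x-a))\|^2$. Integrating against $\rho_X(x)\dd x$ and applying the same substitution yields the factor $\theta^2$ in front of the Fisher information of $U$.

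For the covariance \eqref{cov_scaled_shifted}, I use $U-\mathbb{E}U=\theta(X-\mathbb{E}X)$. Hence $C_U=\theta^2C_X$, which is the claimed relation after dividing by $\theta^2$.

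I expect no step to be a real obstacle. The only care needed is regularity: the chain rule should be read for weakly differentiable densities wherever $\rho>0$, and the integrals should only be split when finite. This is exactly the standing proviso in the statement. The conditional versions used earlier, such as \eqref{simplification_0} and \eqref{simplification_1}, follow by applying the lemma for each fixed $\bar{x}$ with $\theta=\beta_u^{-1}$ and $a=\bar{x}$.
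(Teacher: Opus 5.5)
Your proposal is correct and follows the paper's proof: the paper also obtains $\rho_U(u)=\theta^{-d}\rho_X(u/\theta+a)$ by change of variables, substitutes it into the entropy and Fisher-information integrals, and reads off the covariance relation from $\mathbb{E}[U]-U=\theta(\mathbb{E}[X]-X)$. One small aside on your closing remark: for \eqref{simplification_0} the relevant scaling is $\theta=A_u^{-1/2}$, which equals $\beta_u^{-1}$ only when $A_u=\alpha_u^2$, as it does under the Nesterov schedule.
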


\begin{proof}
By the change of variables formula, the densities of $X$ and $U$ are related as 
\begin{align*}
    \rho_U(u) = \theta^{-d} \rho_X \left(\frac{1}{\theta} u +a \right).
\end{align*}
Plugging this into $\operatorname{Ent}(\rho_U) = \int \rho_U(u)\ln{\rho_U(u)}\dd u$ and $\int \rho_U(u)\|\nabla_u \ln{\rho_U}(u)\|^2\dd u$, respectively, yields  \eqref{entropy_scaled_shifted} and  \eqref{fi_scaled_shifted}. Moreover, since $\mathbb{E}[U]-U = \theta(\mathbb{E}[X]-X)$, we have \eqref{cov_scaled_shifted}.
\end{proof}

\begin{lemma}[Entropy Change]\label{ma}
Let $\rho = f(x)\dd x, \, \mu=g(x)\dd x \in \mathcal P_2(\mathbb R^d)$ be given and assume that $\operatorname{Ent}(\rho), \operatorname{Ent}(\mu) \in \mathbb{R}$. Denote by $T:\mathbb R^d\to\mathbb R^d$ the unique optimal transport map for the quadratic cost from $\rho$ to $\mu$ and by $\nabla T$ its Alexandrov derivative. It holds that 
\begin{align}\label{eq:diff_entr}
        \operatorname{Ent}(\rho) = \operatorname{Ent}(\mu) + \int_{\mathbb{R}^d} \ln \det(\nabla T(x))f(x)\dd x.
    \end{align}
Moreover, $\nabla T(x) \succ 0$, $\rho$-almost everywhere. 
\end{lemma}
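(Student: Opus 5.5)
This identity is the classical change-of-variables argument behind McCann's displacement convexity of the entropy. I would first invoke Brenier--McCann: since $\rho$ is absolutely continuous, there is a convex $\varphi$ with $T=\nabla\varphi$ $\rho$-a.e. Being convex, $\varphi$ has an Alexandrov Hessian $\nabla T=\nabla^2\varphi\succeq 0$ at $\rho$-a.e. point. By McCann's Monge--Amp\`ere theorem \cite[Appendix]{mccann2013five}, for $\rho$-a.e. $x$ the Jacobian equation
\begin{align*}
f(x)=g(T(x))\det\nabla T(x)
\end{align*}
holds. This is the one nontrivial input, and I would cite it rather than reprove it. The positive definiteness claim follows directly. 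Where $f(x)>0$, the right-hand side must be positive, so $\det\nabla T(x)>0$. Combined with $\nabla T(x)\succeq0$, this gives $\nabla T(x)\succ0$ for $\rho$-a.e. $x$.

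Next I would take logarithms of the Jacobian equation on the full-measure set where it holds with $f>0$:
\begin{align*}
\ln f(x)=\ln g(T(x))+\ln\det\nabla T(x).
\end{align*}
Integrating against $\rho$ gives $\int f\ln f$ on the left. For the first term on the right, use $T_\#\rho=\mu$ to rewrite $\int \ln g(T(x))f(x)\dd x=\int g\ln g=\operatorname{Ent}(\mu)$. This yields \eqref{eq:diff_entr}.

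The main obstacle is integrability: the splitting of the integral must be legitimate. By assumption $\operatorname{Ent}(\rho)$ and $\operatorname{Ent}(\mu)$ are finite. Since $T_\#\rho=\mu$, the function $\ln g\circ T$ is $\rho$-integrable exactly when $\ln g$ is $\mu$-integrable, so I need $g\ln g\in L^1$. That in turn needs $\int g(\ln g)^-<\infty$, which I would get from the standard bound using finite second moments, $\int g(\ln g)^-\le C(1+\int\|x\|^2 g)$. The same bound gives $f\ln f\in L^1$. Then $\ln\det\nabla T=\ln f-\ln g\circ T$ is a difference of two $\rho$-integrable functions, hence integrable, and the manipulation is justified. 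As a consistency check, the bound $\ln\det\nabla T\le \operatorname{tr}\nabla T-d$ controls the positive part, though it is not needed once both entropies are finite.
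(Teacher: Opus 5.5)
Your proposal is correct and follows essentially the same route as the paper: the Jacobian equation $f=g(T)\det\nabla T$ holding $\rho$-a.e. by McCann's theorem, then positivity of $\det\nabla T$ combined with $\nabla T\succeq 0$ from convexity of the Brenier potential, then taking logarithms and integrating using $T_\#\rho=\mu$. Your additional integrability argument usefully fills in a step the paper leaves implicit: finite second moments control $\int g(\ln g)^-$, so $\ln f$ and $\ln g\circ T$ are $\rho$-integrable and splitting the integral is legitimate.
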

\begin{proof}
Under the given assumptions, it holds $\rho\text{-almost everywhere}$ that
\begin{align}\label{eq:monge-ampere}
f(x)=g(T(x))\det(\nabla T(x)),
\end{align}
see \cite[Example 11.2]{villani2009optimal} and
\cite[Section 1.4]{mccann2013five}. Moreover, we have both $0 < f(x) <\infty$ and $0 < g(T(x)) < \infty$ for $\rho$\text{-almost every} $x$, since $T_{\#}\rho = \mu$. Thus, \eqref{eq:monge-ampere} implies $\det{\nabla T(x)} > 0$ for $\rho$-almost every $x$. However, since $T(x)$ is the gradient of a convex function (see e.g. \cite[Theorem 1.12]{villani2003topics}), $\nabla T(x) \succeq 0$. In combination, this implies that $\nabla T(x) \succ 0$ for $\rho$-almost every $x$. Further, we may take logarithms in \eqref{eq:monge-ampere} and integrate with respect to $\rho$ to obtain \eqref{eq:diff_entr}.
\end{proof}

\begin{lemma}\label{fisher-information-and-traces}
Let $\rho \in \mathcal{P}_2(\mathbb{R}^d)$ have a Lebesgue density. Denote its Fisher information matrix by $\mathcal{J}$, its Fisher information by $J = Tr(\mathcal{J})$, and its covariance matrix by $C$. Assume that $\mathcal{J} \succ 0$. Then, for any matrix $A \succ 0$ we have
    \begin{align*}
        C:A + J \geq 2\Tr(A^{1/2}).
    \end{align*}
\end{lemma}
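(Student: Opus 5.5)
The plan is to reduce the claim to a scalar inequality: combine the matrix Cramér--Rao inequality $C \succeq \mathcal{J}^{-1}$ with an AM--GM-type bound. First, I would recall (or quickly reprove) the Cramér--Rao inequality for location families. Since $\rho$ has finite second moment and (implicitly) finite Fisher information, integration by parts gives $\mathbb{E}[(X-\mathbb{E}X)\otimes \nabla \ln\rho(X)] = -I_d$. For arbitrary $u,v\in\mathbb{R}^d$, Cauchy--Schwarz then yields $\langle u,v\rangle^2 \le (u^T C u)(v^T\mathcal{J}v)$. Choosing $u=\mathcal{J}^{-1}v$ gives $C\succeq \mathcal{J}^{-1}$ in the Loewner order; here the assumption $\mathcal{J}\succ 0$ is used. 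This integration-by-parts step is the only delicate point, because boundary terms must vanish. I would justify it by a standard truncation argument, using $\int \|x\|^2\rho<\infty$ and $J<\infty$, or simply cite the Cramér--Rao inequality referenced earlier in the paper.

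Second, since $A\succ 0$, monotonicity of the trace pairing gives $C:A = \Tr(A^{1/2} C A^{1/2}) \ge \Tr(A^{1/2}\mathcal{J}^{-1}A^{1/2})$. It therefore suffices to show
\begin{align*}
\Tr(A^{1/2}\mathcal{J}^{-1}A^{1/2}) + \Tr(\mathcal{J}) \ge 2\Tr(A^{1/2}).
\end{align*}
Set $B = A^{1/4}$ and $M = \mathcal{J}^{1/2}$. Then
\begin{align*}
\Tr(A^{1/2}\mathcal{J}^{-1}A^{1/2}) + \Tr(\mathcal{J}) - 2\Tr(A^{1/2}) = \Tr\big((M^{-1}A^{1/2} - M)^T(M^{-1}A^{1/2} - M)\big) = \|M^{-1}A^{1/2}-M\|_F^2 \ge 0.
\end{align*}
The expansion is straightforward. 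The cross term is $-2\Tr(A^{1/2}M^{-1}M)$, which equals $-2\Tr(A^{1/2})$ because $M$ is symmetric. The first term is $\Tr(A^{1/2}M^{-2}A^{1/2}) = \Tr(A^{1/2}\mathcal{J}^{-1}A^{1/2})$. This completes the argument, and equality holds when $\mathcal{J}=A^{1/2}$ and $C=\mathcal{J}^{-1}$, as for a Gaussian with covariance $A^{-1/2}$.

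The main obstacle is the rigorous justification of the Cramér--Rao step; the matrix algebra is routine. If $\mathcal{J}$ were only positive semidefinite, I would perturb to $\mathcal{J}+\varepsilon I$, but the stated hypothesis $\mathcal{J}\succ 0$ removes that issue.
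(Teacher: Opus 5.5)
Your argument is correct and is essentially the paper's proof in mirror image. The paper uses Cram\'er--Rao in the form $\mathcal{J}\succeq C^{-1}$, traces it to get $J\ge\Tr(C^{-1})$, and completes the square as $\|A^{1/2}C^{1/2}-C^{-1/2}\|_F^2$; you instead insert $C\succeq\mathcal{J}^{-1}$ into the pairing with $A$ and complete the square in $\mathcal{J}^{1/2}$.

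There is one small slip in your Cram\'er--Rao sketch. For fixed $u$ you should choose $v=\mathcal{J}^{-1}u$, not $u=\mathcal{J}^{-1}v$. This gives $(u^T\mathcal{J}^{-1}u)^2\le (u^TCu)(u^T\mathcal{J}^{-1}u)$ and hence $C\succeq\mathcal{J}^{-1}$.
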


\begin{proof}
By the Cramer-Rao inequality (see e.g. \cite{KAGAN19997} and note that by assumption $C \succ 0$), the Fisher information matrix $\mathcal{J}$ and the covariance matrix $C$ are related as 
\begin{align*}
    \mathcal{J} \succeq C^{-1},
\end{align*}
meaning that $\mathcal{J} - C^{-1}$ is positive semidefinite. This implies
\begin{align*}
    J_X \geq \operatorname{Tr}(C^{-1}_X).
\end{align*}
Now, for a matrix $X \in \mathbb{R}^{d \times d}$, denote its Frobenius norm by  $\|X\|_F = \sqrt{\operatorname{Tr}(X^TX)}$. Then, we obtain the desired result by writing
\begin{align*}
    C:A + J \geq \operatorname{Tr}(C^T A) + \operatorname{Tr}(C^{-1}) = \|A^{1/2}C^{1/2} - C^{-1/2}\|^2_{F}  + 2\operatorname{Tr}(A^{1/2}) \geq 2\operatorname{Tr}(A^{1/2}).
\end{align*}
\end{proof}

\begin{proof}[Proof of Lemma \ref{lemma:ent-cramer-rao}]
By Lemma~\ref{ma}, we have
\begin{align*}
    \Ent(\rho)-\Ent(\pi)
    =
    \int_{\mathbb R^d}
    \ln\det\bigl(\nabla r(\bar{x})\bigr)
    \,\rho(\dd\bar{x}).
\end{align*}
Hence, subtracting the left-hand side from the right-hand side in \eqref{eq:transport-fisher-compensation}, it remains to show that 
\begin{align*}
    \int_{\mathbb R^d}
    \Big[
        2d
        -J(\nu_{\bar{x}})
        -C(\nu_{\bar{x}}):\nabla r(\bar{x})
        +\ln\det\bigl(\nabla r(\bar{x})\bigr)
    \Big]
    \,\rho(\dd\bar{x})
\end{align*}
is non-positive. By Lemma~\ref{fisher-information-and-traces}, applied with
$A=\nabla r(\bar{x})\succ0$, we have, for $\rho$-almost every $\bar{x}$,
\begin{align*}
    J(\nu_{\bar{x}})
    +
    C(\nu_{\bar{x}}):\nabla r(\bar{x})
    \geq
    2\Tr\bigl(\nabla r(\bar{x})^{1/2}\bigr).
\end{align*}
Therefore,
\begin{align*}
    &2d
    -J(\nu_{\bar{x}})
    -C(\nu_{\bar{x}}):\nabla r(\bar{x})
    +\ln\det\bigl(\nabla r(\bar{x})\bigr)
    \le
    2d
    -2\Tr\bigl(\nabla r(\bar{x})^{1/2}\bigr)
    +\ln\det\bigl(\nabla r(\bar{x})\bigr).
\end{align*}
Let $\lambda_1,\ldots,\lambda_d>0$ denote the eigenvalues of
$\nabla r(\bar{x})$, see Lemma~\ref{ma}. Then
\begin{equation}
 \label{eq:log inequality}  .
    2d
    -2\Tr\bigl(\nabla r(\bar{x})^{1/2}\bigr)
    +\ln\det\bigl(\nabla r(\bar{x})\bigr)
=
    2\sum_{i=1}^d
    \left(
        1-\sqrt{\lambda_i}
        +\ln\sqrt{\lambda_i}
    \right)
    \le 0,
\end{equation}
where the last inequality follows from
$\ln z\leq z-1$ for $z>0$.
Integrating with respect to $\rho$ proves
\eqref{eq:transport-fisher-compensation}.
\end{proof}

\subsection{Proof of Lemmas~\ref{lemma:L2-distance_convex} and \ref{L2-distance_sc}}
\label{sec:proof_L2-distance_convex}

For convenience, define the shorthand
\begin{equation}
    F(s)
    :=
    \frac12
    \mathbb E\big[
        \|X_s-r_s(\bar X_s)\|^2
    \big].
\end{equation}
As hinted at in Remark
\ref{rem:Brenier regularity}, the Brenier map $r_s$ (and hence $F$) is in general not differentiable in $s$. To work around this obstacle, we define the surrogate
\begin{equation}
\label{eq:def F tilde}
    \widetilde F_t(s)
    :=
 \frac12
    \mathbb E\big[
        \|X_s-\widetilde{r}_{t,s}(\bar X_s)\|^2
    \big] ,
\end{equation}
where
\begin{equation}
\label{eq:def r tilde}
    \widetilde r_{t,s}(\Phi_{t,s}(\bar x))
    :=
    r_t(\bar x),
\end{equation}
i.e., $\widetilde{r}_{t,s}$ is a transported version of the Brenier map $r_t$, using the marginal-flow $\Phi_{t,s}$ defined in \eqref{eq:flow ODE}.
\begin{equation}
    \widetilde r_{t,t}=r_t,
    \qquad
    (\widetilde r_{t,s})_\#\rho_s=\pi,
\end{equation}
for all $s,t \in I$ (where $I$ is the compact interval from Assumption \ref{ass:FP-regularity}), and that $\widetilde{r}_{t,s}$ is differentiable in $s$. For reference below, the ODE \eqref{eq:flow ODE} can be written in $(\bar{X}_t,X_t)$-coordinates as
\begin{equation}
\label{eq:flow property}
    \frac{\dd}{\dd s}\Phi_{t,s}(\bar x)
    =
    \frac{\alpha_s}{A_s}
    \left(
        \mathbb{E}\!\left[
            X_s
            \,\middle|\,
            \bar X_s=\Phi_{t,s}(\bar x)
        \right]
        -
      \Phi_{t,s}(\bar x)
    \right).
\end{equation}
To prove Lemma \ref{lemma:L2-distance_convex}, we will first differentiate $\widetilde{F}_t$ along the game strategies, see Lemma \ref{lemma:frozen-map-derivative} and \ref{lemma:frozen-map-derivative_sc}, and then establish a comparison estimate between $\widetilde{F}_t$ and $F$ in Lemma \ref{lemma:frozen-map-comparison}. We note that similar proof strategies (freezing optimal transport maps and comparing to smooth-in-time comparators) are common; see, for example, \cite[Sections 8.4 and 8.5]{ambrosio2008gradient} or the recent related work \cite[proof of Lemma 4.1]{lu2026sharp}.
\begin{lemma}
\label{lemma:frozen-map-derivative} Along the solution to \eqref{noisy},
\begin{align}
\label{eq:frozen-map-derivative}
    \left.
    \frac{\dd}{\dd s}\widetilde F_t(s)
    \right|_{s=t}
    ={}&
    -\alpha_t
    \mathbb{E}\!\left[
        \left\langle
            X_t-r_t(\bar X_t),Y_t
        \right\rangle
    \right]
    -\frac{\alpha_t}{A_t}
    \mathbb{E}\!\left[
        C_t^X(\bar X_t):
        \nabla r_t(\bar X_t)
    \right]
    +\frac{3}{2}\alpha_t d.
\end{align}
\end{lemma}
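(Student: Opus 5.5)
We compute the derivative of $\widetilde F_t(s)=\frac12\E\|X_s-\widetilde r_{t,s}(\bar X_s)\|^2$ at $s=t$ by splitting it into two parts. The first part comes from the dynamics of $(\bar X_s,X_s)$ with the map $\widetilde r_{t,\cdot}$ held at the value it takes at time $s$. The second part comes from the explicit $s$-dependence of $\widetilde r_{t,s}$. The construction \eqref{eq:def r tilde} is designed so that the second part cancels the transport of $\bar X_s$ along its conditional mean velocity. Concretely, we differentiate the identity $\widetilde r_{t,s}(\Phi_{t,s}(\bar x))=r_t(\bar x)$ in $s$. Using \eqref{eq:flow property}, this gives
\begin{equation*}
\partial_s\widetilde r_{t,s}(z)=-\nabla\widetilde r_{t,s}(z)\,\frac{\alpha_s}{A_s}\big(m_s(z)-z\big),\qquad m_s(z):=\E[X_s\mid\bar X_s=z].
\end{equation*}

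Next we apply It\^o's formula to $G_s:=\frac12\|X_s-\widetilde r_{t,s}(\bar X_s)\|^2$. We use $\dd X_s=-\alpha_sY_s\dd s+\sqrt{3\alpha_s}\dd W_s$, and $\bar X_s$ is of bounded variation with $\dd\bar X_s=\frac{\alpha_s}{A_s}(X_s-\bar X_s)\dd s$. The quadratic variation only enters through $X_s$ and contributes $\frac32\alpha_s d$. The drift of $X_s$ contributes $-\alpha_s\langle X_s-\widetilde r,Y_s\rangle$. The remaining contributions from the $\bar X_s$-motion and from $\partial_s\widetilde r$ combine to
\begin{equation*}
-\frac{\alpha_s}{A_s}\Big\langle X_s-\widetilde r_{t,s}(\bar X_s),\ \nabla\widetilde r_{t,s}(\bar X_s)\big(X_s-m_s(\bar X_s)\big)\Big\rangle .
\end{equation*}
We take expectations; the martingale term vanishes by the integrability in Assumption~\ref{ass:FP-regularity}. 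We then condition on $\bar X_s$. Since $\widetilde r_{t,s}(\bar X_s)$ and $m_s(\bar X_s)$ are $\sigma(\bar X_s)$-measurable, the term $\langle \widetilde r-m,\nabla\widetilde r\,(X_s-m)\rangle$ has zero conditional mean. What remains is $\E[(X_s-m)\otimes(X_s-m)\mid\bar X_s]:\nabla\widetilde r_{t,s}=C^X_s:\nabla\widetilde r_{t,s}$. Setting $s=t$, where $\widetilde r_{t,t}=r_t$, yields \eqref{eq:frozen-map-derivative}.

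The main obstacle is regularity. The derivative $\nabla r_t$ exists only in the Alexandrov sense, and the spatial and temporal differentiability of $\widetilde r_{t,s}$ is inherited from $r_t$ through the flow $\Phi_{t,s}$. The plan is therefore to avoid pointwise It\^o calculus and work weakly. We would write $\widetilde F_t(s)$ as an integral against the joint density of $(\bar X_s,X_s)$ and use its Fokker--Planck equation. We would express $\widetilde F_t(s)$ through the pullback by $\Phi_{t,s}$, so that the only $s$-dependence in the integrand sits in the conditional law of $X_s$ given $\Phi_{t,s}^{-1}(\bar X_s)$. We would then justify the needed integrations by parts via the Gaussian-moment bounds \eqref{eq:FKP-integrability1}--\eqref{eq:FkP-integrability2}. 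If necessary, the Alexandrov derivative would be handled by mollifying the convex potential of $r_t$ and passing to the limit, using that distributional Hessians of convex functions are measures whose absolutely continuous part is $\nabla r_t$.
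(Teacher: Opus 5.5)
Your proposal is correct and follows essentially the same route as the paper. The paper likewise splits the derivative into a frozen-map contribution and the correction from $\partial_s\widetilde r_{t,s}$ given by \eqref{eq:transported-map-equation}; it computes the frozen-map part via the Fokker--Planck equation \eqref{eq:eta FKP} and integration by parts, which is the weak form of your It\^o computation and the rigorous route you sketch in your last paragraph. The two parts combine to $-\frac{\alpha_t}{A_t}\mathbb{E}\langle X_t-r_t(\bar X_t),\nabla r_t(\bar X_t)(X_t-\mathbb{E}[X_t\mid\bar X_t])\rangle$, and the same conditional-centering step turns this into $-\frac{\alpha_t}{A_t}\mathbb{E}[C^X_t(\bar X_t):\nabla r_t(\bar X_t)]$; your regularity discussion goes beyond the paper, which treats $\nabla r_t$ and $\partial_s\widetilde r_{t,s}$ as classical derivatives without further comment.
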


\begin{proof}
In the following, we will denote by $\eta_t$ the Lebesgue density of $(\bar{X}_t,X_t)$. Note that, since by \eqref{eq:QP-X_c}, $(\bar{X}_t,X_t)$ is a linear transformation from $(Q_t,P_t)$ with locally bounded coefficients, such a density exists and inherits the regularity from $\widehat{\rho}_t$ in Assumption \ref{ass:FP-regularity}.
We differentiate \eqref{eq:def F tilde} at \(s=t\):
\begin{align}
\label{eq:frozen-F-derivative-start}
    \left.
    \frac{\dd}{\dd s}\widetilde F_t(s)
    \right|_{s=t}
    =
    \frac12
    \int_{\mathbb{R}^{2d}}
    \|x-r_t(\bar x)\|^2
    \partial_t\eta_t(\bar x,x)
    \dd\bar x\dd x
    -
    \int_{\mathbb{R}^{2d}}
    \left\langle
        x-r_t(\bar x),
        \left.
        \partial_s\widetilde r_{t,s}(\bar x)
        \right|_{s=t}
    \right\rangle
    \eta_t(\bar x,x)
    \dd\bar x\dd x .
\end{align}
The Fokker--Planck equation for $\eta_t$ is  
\begin{equation}
\label{eq:eta FKP}
\partial_t\eta_t(\bar x,x)
    =
    -\frac{\alpha_t}{A_t}
    \nabla_{\bar x}\cdot
    \bigl((x-\bar x)\eta_t(\bar x,x)\bigr)
    +
    \alpha_t y_t(\bar x)\cdot\nabla_x\eta_t(\bar x,x)
    +
    \frac32\alpha_t\Delta_x\eta_t(\bar x,x),
\end{equation}
where we have used the notation $Y_t = y_t(\bar{X}_t)$.
Substituting \eqref{eq:eta FKP} into the first term of
\eqref{eq:frozen-F-derivative-start} and integrating by parts yields
\begin{align*}
    \frac12
    \int_{\mathbb{R}^{2d}}
    \|x-r_t(\bar x)\|^2
    \partial_t\eta_t(\bar x,x)
    \dd\bar x\dd x
    ={}&
    -\frac{\alpha_t}{A_t}
    \int_{\mathbb{R}^{2d}}
    \left\langle
        x-r_t(\bar x),
        \nabla_{\bar x}r_t(\bar x)(x-\bar x)
    \right\rangle
    \eta_t(\bar x,x)
    \dd\bar x\dd x
    \\
    &-
    \alpha_t
    \mathbb{E}\!\left[
        \left\langle
            X_t-r_t(\bar X_t),Y_t
        \right\rangle
    \right]
    +
    \frac32\alpha_t d .
\end{align*}
For the second term in \eqref{eq:frozen-F-derivative-start}, we differentiate \eqref{eq:def r tilde} with respect to $s$ and, using \eqref{eq:flow property}, we see that 
\begin{align}
\label{eq:transported-map-equation}
    \partial_s\widetilde r_{t,s}(\bar x)
    =
    -\frac{\alpha_s}{A_s}
    \nabla_{\bar x}\widetilde r_{t,s}(\bar x)
    \left(
        \mathbb{E}[X_s\mid\bar X_s=\bar x]-\bar x
    \right).
\end{align}
Plugging this into \eqref{eq:frozen-F-derivative-start} and rearranging, we arrive at 
\begin{align*}
\left.
    \frac{\dd}{\dd s}\widetilde F_t(s)
    \right|_{s=t} = & \, - \alpha_t
    \mathbb{E}\!\left[
        \left\langle
            X_t-r_t(\bar X_t),Y_t
        \right\rangle
    \right]
    +
    \frac32\alpha_t d 
    \\
    &    -\frac{\alpha_t}{A_t}
    \int_{\mathbb{R}^{2d}}
    \left\langle
        x-r_t(\bar x),
        \nabla_{\bar x}r_t(\bar x)
        \left(
            x-\mathbb{E}[X_t\mid\bar X_t=\bar x]
        \right)
    \right\rangle
    \eta_t(\bar x,x)
    \dd\bar x\dd x
\end{align*}
Using
$   \mathbb{E}\left[
        X_t-\mathbb{E}[X_t\mid\bar X_t]
        \,\middle|\,
        \bar X_t
    \right]
    =0$ and the definition of the covariance $C_t^X$, we finally see that 
\begin{align*}
    \int_{\mathbb{R}^{2d}}
    \left\langle
        x-r_t(\bar x),
        \nabla_{\bar x}r_t(\bar x)
        \left(
            x-\mathbb{E}[X_t\mid\bar X_t=\bar x]
        \right)
    \right\rangle
    \eta_t(\bar x,x)
    \dd\bar x\dd x =
\mathbb{E}\!\left[
C_t^X(\bar X_t):
        \nabla r_t(\bar X_t)
    \right],
\end{align*}
proving the lemma.
\end{proof}

\begin{lemma}
\label{lemma:frozen-map-derivative_sc} Along the solution to \eqref{noisy_sc},
\begin{align}
\label{eq:frozen-map-derivative_sc}
    \left.
    \frac{\dd}{\dd s}\widetilde F_t(s)
    \right|_{s=t}
    ={}&
    -\beta_t
    \mathbb{E}\!\left[
        \left\langle
            X_t-r_t(\bar X_t),Y_t + \sigma X_u
        \right\rangle
    \right]
    -\frac{1}{\beta_t}
    \mathbb{E}\!\left[
        C_t^X(\bar X_t):
        \nabla r_t(\bar X_t)
    \right]
    +\frac{\beta_t(3+\sigma \beta_t^2)}{2} d .
\end{align}
\end{lemma}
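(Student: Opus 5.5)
The plan is to follow the proof of Lemma \ref{lemma:frozen-map-derivative} line by line, changing only the Fokker--Planck equation for the joint density $\eta_t$ of $(\bar X_t,X_t)$. The averaging relation \eqref{eq:average evolution} is unchanged, so with $\beta_t=A_t/\alpha_t$ the transport term in $\bar x$ has coefficient $\frac{\alpha_t}{A_t}=\frac{1}{\beta_t}$. The X-player's strategy \eqref{noisy_sc} has drift $-\beta_t(y_t(\bar x)+\sigma x)$ and diffusion coefficient $\beta_t(3+\sigma\beta_t^2)$. The density $\eta_t$ therefore satisfies
\begin{equation*}
\partial_t\eta_t=-\frac{1}{\beta_t}\nabla_{\bar x}\cdot\bigl((x-\bar x)\eta_t\bigr)+\beta_t\nabla_x\cdot\bigl((y_t(\bar x)+\sigma x)\eta_t\bigr)+\frac{\beta_t(3+\sigma\beta_t^2)}{2}\Delta_x\eta_t .
\end{equation*}
The existence and regularity of $\eta_t$ follow exactly as before. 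Since $(\bar X_t,X_t)$ is an invertible linear image of $(Q_t,P_t)$ by \eqref{eq:QP-X_sc}, these properties are inherited from Assumption \ref{ass:FP-regularity}, and this also justifies the integrations by parts below.

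First I differentiate $\widetilde F_t(s)$ at $s=t$, which gives the same two-term splitting as in \eqref{eq:frozen-F-derivative-start}. In the first term I substitute the Fokker--Planck equation above and integrate by parts against $\frac12\|x-r_t(\bar x)\|^2$. The $\bar x$-transport part gives $-\frac{1}{\beta_t}\mathbb E\langle X_t-r_t(\bar X_t),\nabla r_t(\bar X_t)(X_t-\bar X_t)\rangle$. The $x$-drift part gives $-\beta_t\mathbb E\langle X_t-r_t(\bar X_t),Y_t+\sigma X_t\rangle$. The Laplacian part gives $\frac{\beta_t(3+\sigma\beta_t^2)}{2}d$, because $\Delta_x\frac12\|x-r\|^2=d$.

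For the second term, I differentiate the defining relation \eqref{eq:def r tilde} along the flow \eqref{eq:flow property}. This yields \eqref{eq:transported-map-equation} with $\frac{\alpha_s}{A_s}$ replaced by $\frac{1}{\beta_s}$. It contributes $+\frac{1}{\beta_t}\mathbb E\langle X_t-r_t(\bar X_t),\nabla r_t(\bar X_t)(\mathbb E[X_t|\bar X_t]-\bar X_t)\rangle$. Adding this to the transport part replaces $X_t-\bar X_t$ by $X_t-\mathbb E[X_t|\bar X_t]$. I then condition on $\bar X_t$. The summand $\mathbb E[X_t|\bar X_t]-r_t(\bar X_t)$ is $\sigma(\bar X_t)$-measurable and is paired with a vector of zero conditional mean, so its contribution vanishes. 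What remains is $-\frac{1}{\beta_t}\mathbb E[C_t^X(\bar X_t):\nabla r_t(\bar X_t)]$, which gives \eqref{eq:frozen-map-derivative_sc}; the $X_u$ appearing in the statement should read $X_t$.

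The only delicate point is the interchange of derivative and integral, and the integrations by parts involving the Alexandrov derivative $\nabla r_t$ together with the new unbounded drift term $\sigma x$. I would handle these as in the convex case. The Gaussian-weighted integrability \eqref{eq:FKP-integrability1}, the bound \eqref{eq:FkP-integrability2}, and the differentiability of $\widetilde r_{t,s}$ in $s$ coming from part (ii) of the assumption control all boundary terms. The linear growth of $\sigma x$ is dominated by the exponential moments, so no new estimate is needed.
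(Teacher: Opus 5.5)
Your proposal is correct and is essentially the paper's own argument. The paper also proves this lemma by rerunning the proof of Lemma \ref{lemma:frozen-map-derivative} with the modified Fokker--Planck equation: transport coefficient $\alpha_t/A_t=1/\beta_t$ in $\bar x$, drift term $\beta_t\nabla_x\cdot((y_t(\bar x)+\sigma x)\eta_t)$, and diffusion coefficient $\beta_t(3+\sigma\beta_t^2)/2$. Your term-by-term computation checks out, including the combination that produces $X_t-\mathbb{E}[X_t\mid\bar X_t]$ and the resulting conditional-covariance term, and you are right that the $X_u$ in the statement is a typo for $X_t$.
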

\begin{proof}
The proof is analogous to that of Lemma \ref{lemma:frozen-map-derivative}. We only need to replace the Fokker--Planck equation by  
\begin{align*}
\partial_t\eta_t(\bar x,x)
    =
    -\frac{\alpha_t}{A_t}
    \nabla_{\bar x}\cdot
    \bigl((x-\bar x)\eta_t(\bar x,x)\bigr)
    +
    \beta_t \nabla_x\cdot ((y_t(\bar x)+\sigma x)\eta_t(\bar x,x))
    +
    \frac{\beta_t(3+\sigma \beta_t^2)}{2}\Delta_x\eta_t(\bar x,x).
\end{align*}
\end{proof}

\begin{lemma}
\label{lemma:frozen-map-comparison}
For every compact interval $I \subset (0,\infty)$, we have 
\begin{equation}
\int_I\frac{(F(t+h) - \widetilde{F}_t(t+h))_+}{h} \dd t \rightarrow 0, \qquad h \rightarrow 0^+.
\end{equation}
\end{lemma}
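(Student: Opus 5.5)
The plan is to exploit the optimality of $r_{t+h}$ among all maps pushing $\rho_{t+h}$ forward to $\pi$. The comparison map $\widetilde r_{t,t+h}$ is such a map, since $(\widetilde r_{t,t+h})_\#\rho_{t+h}=\pi$. Expanding the squares gives
\begin{align*}
F(t+h)-\widetilde F_t(t+h)
= -\mathbb{E}\big[\langle X_{t+h}, r_{t+h}(\bar X_{t+h})-\widetilde r_{t,t+h}(\bar X_{t+h})\rangle\big],
\end{align*}
because the second moments $\tfrac12\mathbb{E}\|r_{t+h}(\bar X_{t+h})\|^2=\tfrac12\mathbb{E}\|\widetilde r_{t,t+h}(\bar X_{t+h})\|^2=\tfrac12\int\|x\|^2\pi(\dd x)$ cancel. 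Conditioning on $\bar X_{t+h}$, this becomes
\begin{align*}
-\mathbb{E}\big[\langle m_{t+h}(\bar X_{t+h}), r_{t+h}(\bar X_{t+h})-\widetilde r_{t,t+h}(\bar X_{t+h})\rangle\big],
\qquad m_s(\bar x):=\mathbb{E}[X_s\mid \bar X_s=\bar x].
\end{align*}
By \eqref{eq:QP-X_c} we have $m_s(\bar x)=\bar x+\frac{A_s}{\alpha_s}v_s(\bar x)$. The $\bar x$ part contributes $-\mathbb{E}\langle \bar X_{t+h}, r_{t+h}-\widetilde r_{t,t+h}\rangle\le 0$, since $r_{t+h}$ maximises $\mathbb{E}\langle\bar X_{t+h},T(\bar X_{t+h})\rangle$ over transport maps $T$ to $\pi$. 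The positive part is therefore controlled by the velocity part alone:
\begin{align*}
(F(t+h)-\widetilde F_t(t+h))_+\le \beta_{t+h}\,\big|\mathbb{E}\langle v_{t+h}(\bar X_{t+h}), r_{t+h}(\bar X_{t+h})-\widetilde r_{t,t+h}(\bar X_{t+h})\rangle\big|.
\end{align*}
Here $\beta_s=A_s/\alpha_s$, which is bounded on $I$.

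Next I would quantify $r_{t+h}-\widetilde r_{t,t+h}$. The standard estimate comparing optimal and non-optimal couplings gives
\begin{align*}
\|r_{t+h}-\widetilde r_{t,t+h}\|_{L^2(\rho_{t+h})}^2\lesssim \mathbb{E}\langle \bar X_{t+h}, r_{t+h}-\widetilde r_{t,t+h}\rangle .
\end{align*}
Bounding this cleanly needs strong convexity of the Brenier potential, which we do not have. So instead I would use a first-order argument. Transport $v_{t+h}$ back along the flow and pair it with $r_t$; equivalently, change variables by $\Phi_{t,t+h}$ to rewrite
\begin{align*}
\mathbb{E}\langle v_{t+h}(\bar X_{t+h}),\widetilde r_{t,t+h}(\bar X_{t+h})\rangle=\mathbb{E}\langle v_{t+h}(\Phi_{t,t+h}(\bar X_t)), r_t(\bar X_t)\rangle.
\end{align*}
The quantity $g(s):=\mathbb{E}\langle v_{s}(\Phi_{t,s}\bar X_t), r_t(\bar X_t)\rangle$ is differentiable, with derivative $\mathbb{E}\langle a_s(\Phi_{t,s}\bar X_t), r_t(\bar X_t)\rangle$. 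This derivative is bounded on $I$ by \eqref{eq:acceleration bound} and the finite second moment of $\pi$. The analogous quantity with $r_{t+h}$ is bounded above by the corresponding supremum over couplings. In this way the difference becomes a difference quotient of the \emph{concave-in-time-type} functional $s\mapsto \sup_{R\sim\pi}\mathbb{E}\langle v_s(\bar X_s),R\rangle$ against its value along the frozen coupling.

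Concretely, I would show that
\begin{align*}
\frac{1}{h}\big|\mathbb{E}\langle v_{t+h}, r_{t+h}-\widetilde r_{t,t+h}\rangle\big|\le \omega_t(h),
\end{align*}
where $\omega_t(h)\to0$ for a.e.\ $t$ and $\omega_t$ is dominated on $I$. Then I conclude by dominated convergence in $t$. The pointwise convergence would come from $L^2$-continuity of $s\mapsto r_s$ (stability of Brenier maps under weak convergence, with $\rho_s$ continuous in $\mathcal W_2$ because $v$ has bounded second moments), combined with $\widetilde r_{t,t+h}\to r_t$ in $L^2$. Together these give $r_{t+h}-\widetilde r_{t,t+h}\to0$ in $L^2(\rho_{t+h})$. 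The remaining issue is to obtain a rate $o(h)$ rather than $o(1)$.

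\textbf{Main obstacle.} I expect this last step to be the main obstacle, since the Brenier map is only Hölder in time. My first attempt would avoid any rate on $r_{t+h}-r_t$. I would use the one-sided structure instead: the non-positive $\bar x$-term computed above contains $-\mathbb{E}\langle\bar X_{t+h},r_{t+h}-\widetilde r_{t,t+h}\rangle$, which dominates a multiple of $\|r_{t+h}-\widetilde r_{t,t+h}\|^2$ in a weak sense. Absorbing the velocity term by Young's inequality would then leave a bound of order $h^2\,\mathbb{E}\|v_{t+h}\|^2$, which is $o(h)$ uniformly on $I$. If this absorption fails for lack of strong convexity, I would fall back on an averaged-in-$t$ argument. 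Integrating over $I$ lets $\int_I\frac{1}{h}(\cdot)\,\dd t$ telescope into a boundary term plus an $L^1$-modulus of continuity of $t\mapsto r_t$, and that modulus vanishes as $h\to0$.
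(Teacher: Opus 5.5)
Your opening steps are correct and agree with the paper. After expanding the squares, the second moments cancel. The $\bar X_{t+h}$-part $-\mathbb{E}\langle \bar X_{t+h}, r_{t+h}(\bar X_{t+h})-\widetilde r_{t,t+h}(\bar X_{t+h})\rangle$ is non-positive by optimality of $r_{t+h}$. The change of variables along $\Phi_{t,t+h}$ and the identity $\frac{\dd}{\dd s}v_s\circ\Phi_{t,s}=a_s\circ\Phi_{t,s}$ are the right tools.

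\textbf{The gap.} You bound the positive part by $\beta_{t+h}\,|\mathbb{E}\langle v_{t+h}(\bar X_{t+h}), r_{t+h}(\bar X_{t+h})-\widetilde r_{t,t+h}(\bar X_{t+h})\rangle|$ and then try to show this is $o(h)$. That is false in general. The velocity term is typically of exact order $h$, and the lemma holds only because this order-$h$ contribution enters $F(t+h)-\widetilde F_t(t+h)$ with a definite negative sign. The absolute value discards that sign.

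\emph{Counterexample.} Take $d=2$, $\pi=\mathcal N(0,I_2)$ and $v_s(x)=\omega Jx$, with $J$ the rotation by a right angle. This field keeps $\rho_s\equiv\pi$, has bounded acceleration $a_s(x)=-\omega^2x$, and gives $r_s=\mathrm{Id}$ and $\Phi_{t,t+h}=e^{h\omega J}$. Hence
\[
\mathbb{E}\big\langle v_{t+h}(\bar X_{t+h}), r_{t+h}(\bar X_{t+h})-\widetilde r_{t,t+h}(\bar X_{t+h})\big\rangle=\int\big\langle v_{t+h}\circ\Phi_{t,t+h},\,\Phi_{t,t+h}-\mathrm{Id}\big\rangle\,\dd\pi=2\omega\sin(h\omega),
\]
so $h^{-1}|\cdot|\to 2\omega^2\neq 0$ for every $t$. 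The same first-order picture arises for Gaussian underdamped dynamics with $V=\tfrac12\|q\|^2$, at and near a time where $\Law(Q_t)=\mathcal N(0,I_2)$ and $\mathbb{E}[P_t\mid Q_t]=\omega JQ_t$. So no modulus $\omega_t(h)\to0$ exists, and no amount of $L^2$-continuity of $s\mapsto r_s$ can produce one.

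\textbf{Your fallbacks do not close this.} The Young absorption contains no small parameter. With $\delta=\|r_{t+h}-\widetilde r_{t,t+h}\|_{L^2(\rho_{t+h})}$, it yields at best $\sup_{\delta}(\beta_{t+h}\|v_{t+h}\|_{L^2(\rho_{t+h})}\delta-c\delta^2)=O(1)$, not $O(h^2)$. Moreover, the coercivity $c\,\delta^2\le\mathbb{E}\langle\bar X_{t+h},r_{t+h}-\widetilde r_{t,t+h}\rangle$ would itself need regularity of the Brenier map (e.g.\ a Lipschitz bound) that is not available. The telescoping idea is unspecified, and $F(t+h)-\widetilde F_t(t+h)$ does not telescope, because the frozen map changes with $t$. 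Finally, $\sup_{R\sim\pi}\mathbb{E}\langle v_{t+h}(\bar X_{t+h}),R\rangle$ is not the relevant object: $r_{t+h}$ is optimal for pairing with positions $\bar X_{t+h}$, not with velocities.

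\textbf{The missing idea} (the paper's) is to convert the velocity into a difference of positions. Write $v_{t+h}\circ\Phi_{t,t+h}=h^{-1}(\Phi_{t,t+h}-\mathrm{Id})+\big(v_{t+h}\circ\Phi_{t,t+h}-h^{-1}(\Phi_{t,t+h}-\mathrm{Id})\big)$.

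The first piece contributes $-\frac{\beta_{t+h}}{h}\int\langle\Phi_{t,t+h}(\bar x)-\bar x,\,r_{t+h}(\Phi_{t,t+h}(\bar x))-r_t(\bar x)\rangle\,\rho_t(\dd\bar x)\le 0$. This holds because $(\bar x,r_t(\bar x))$ and $(\Phi_{t,t+h}(\bar x),r_{t+h}(\Phi_{t,t+h}(\bar x)))$ are both optimal couplings into $\pi$. This is exactly where the order-$h$ term sits, and it has the good sign.

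The second piece is $O(h)$ in $L^2(\rho_t)$ by the acceleration bound. It is paired with $r_{t+h}\circ\Phi_{t,t+h}-r_t$, which tends to $0$ in $L^2(\rho_t)$ by stability of Brenier maps. The product is therefore $o(h)$, and dominated convergence on $I$ concludes.
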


\begin{proof}
For $h > 0$, expand
\begin{subequations}
\begin{align}
& F(t + h) - \widetilde{F}_t(t+h)  = \frac{1}{2} \mathbb{E} \left[ \| X_{t + h} -  r_{t+ h} (\bar{X}_{t+h}) \|^2 \right] - \frac{1}{2} \mathbb{E} \left[ \| X_{t + h} -  \widetilde{r}_{t,t+ h} (\bar{X}_{t+h}) \|^2 \right]
\\
& = \mathbb{E} \left[ \langle X_{t + h}, \widetilde{r}_{t, t+ h} (\bar{X}_{t+h}) - r_{t+h}(\bar{X}_{t + h}) \rangle \right] + \frac{1}{2} \left( \mathbb{E} \left[\|r_{t + h}(\bar{X}_{t + h})\|^2 \right]- \mathbb{E} \left[ \|\widetilde{r}_{t,t + h}(\bar{X}_{t + h})\|^2 \right] \right).
\label{eq:F comparison2}
\end{align}
\end{subequations}
By construction, both $r_{t + h}(\bar{X}_{t + h})$ and $\widetilde{r}_{t,t + h}(\bar{X}_{t + h})$ are distributed according to $\pi$. Therefore, the last two terms in \eqref{eq:F comparison2} cancel. To treat the first term, write 
\begin{subequations}
\label{eq:add subtract}
\begin{align}
\mathbb{E} \left[ \langle X_{t + h}, \widetilde{r}_{t, t+ h} (\bar{X}_{t+h}) - r_{t+h}(\bar{X}_{t + h}) \rangle \right] = & \mathbb{E} \left[ \langle \bar{X}_{t + h}, \widetilde{r}_{t, t+ h} (\bar{X}_{t+h}) - r_{t+h}(\bar{X}_{t + h}) \rangle \right]
\\
& + \mathbb{E} \left[ \langle X_{t + h } - \bar{X}_{t + h}, \widetilde{r}_{t, t+ h} (\bar{X}_{t+h}) - r_{t+h}(\bar{X}_{t + h}) \rangle \right]
\end{align}
\end{subequations}
Because both $\widetilde{r}_{t, t+ h} (\bar{X}_{t+h})$ and $r_{t+h}(\bar{X}_{t + h})$ have law $\pi$, and, by construction, $(\bar{X}_{t + h},r_{t+h}(\bar{X}_{t + h}))$ is a Wasserstein-2 optimal coupling, we see that the first term on the right-hand side of \eqref{eq:add subtract} is non-positive. For the second term in \eqref{eq:add subtract}, observe that
\begin{subequations}
\begin{align}
\mathbb{E} & \left[ \langle X_{t + h } - \bar{X}_{t + h},  \widetilde{r}_{t, t+ h} (\bar{X}_{t+h}) - r_{t+h}(\bar{X}_{t + h}) \rangle \right] = \frac{A_{t+h}}{\alpha_{t+h}} \mathbb{E} \left[ \langle v_{t+h}(\bar{X}_{t+h}), \widetilde{r}_{t, t+ h} (\bar{X}_{t+h}) - r_{t+h}(\bar{X}_{t + h}) \rangle \right]
\\
 &   = \frac{A_{t+h}}{\alpha_{t+h}}
    \int_{\mathbb R^d}
    \Big\langle
        v_{t+h}\bigl(\Phi_{t,t+h}(\bar x)\bigr),
        r_t(\bar x)
        -
        r_{t+h}\bigl(\Phi_{t,t+h}(\bar x)\bigr)
    \Big\rangle
    \rho_t(\dd\bar x),
\end{align}
\end{subequations}
where we have used \eqref{eq:QP-X} and \eqref{eq:def v} in the first line and \eqref{eq:flow property} in the second line. Adding and subtracting $h^{-1} (\Phi_{t,t+h}(\bar{x}) - \bar{x})$, which approximates $v_{t+h}(\Phi_{t,t+h}(\bar{x}))$, the last expression is equal to 
\begin{subequations}
\begin{align}
\label{eq:correlation comparison}
    &-\frac{A_{t+h}}{\alpha_{t+h}h}
    \int_{\mathbb R^d}
    \Big\langle
        \Phi_{t,t+h}(\bar x)-\bar x,
        r_{t+h}\bigl(\Phi_{t,t+h}(\bar x)\bigr)-r_t(\bar x)
    \Big\rangle
    \rho_t(\dd\bar x)
 \\
    &\quad
    +
    \frac{A_{t+h}}{\alpha_{t+h}}
    \int_{\mathbb R^d}
    \Bigg\langle
        \frac{\Phi_{t,t+h}(\bar x)-\bar x}{h}
        -
        v_{t+h}\bigl(\Phi_{t,t+h}(\bar x)\bigr),
        r_{t+h}\bigl(\Phi_{t,t+h}(\bar x)\bigr)-r_t(\bar x)
    \Bigg\rangle
    \rho_t(\dd\bar x).
\label{eq:velocity approx}
\end{align}
\end{subequations}
We argue that the term in \eqref{eq:correlation comparison} is non-positive: Indeed, for $\bar{x} \sim \rho_t$, both $r_{t+h}\bigl(\Phi_{t,t+h}(\bar x)\bigr)$ and $r_t(\bar{x})$ are distributed according to $\pi$, and $(\bar{x}, r_t(\bar{x}))$ is an optimal coupling. Therefore, $$\int_{\mathbb{R}^d}\langle \bar{x},  r_{t+h}\bigl(\Phi_{t,t+h}(\bar x)\bigr) -  r_t(\bar{x})\rangle \rho_t(\dd\bar x) \le 0.$$ Similarly, $(\Phi_{t,t+h}(\bar x),r_{t+h}\bigl(\Phi_{t,t+h}(\bar x)\bigr) )$ is an optimal coupling, hence $$\int_{\mathbb{R}^d}\langle \Phi_{t,t+h}(\bar x),r_{t+h}\bigl(\Phi_{t,t+h}(\bar x)\bigr) -  r_t(\bar{x})\rangle \rho_t(\dd\bar x) \ge 0.$$
Collecting the arguments so far, we see that $F(t+h) - \widetilde{F}_t(t+h)$ is bounded from above by \eqref{eq:velocity approx}. Using Cauchy-Schwarz, we arrive at
\begin{equation}
F(t+h) - \widetilde{F}_t(t+h) \le \frac{A_{t+h}}{\alpha_{t+h}}
    \left\|
        \frac{\Phi_{t,t+h}-I_d}{h}
        -
        v_{t+h} \circ \Phi_{t,t+h}
\right\|_{L^2(\rho_t)}
    \left\|
        r_{t+h}\circ\Phi_{t,t+h}-r_t
    \right\|_{L^2(\rho_t)}
\label{eq:CS}
\end{equation}
By \cite[Theorem 1.7.7]{panaretos2020invitation}, the second factor in \eqref{eq:CS} converges to zero as $h \rightarrow 0$. Indeed, since $(\Phi_{t,t+h})_\#\rho_t=\rho_{t+h}$ and
$\Phi_{t,t+h}\to \operatorname{Id}$ in $L^2(\rho_t)$, we have
$\rho_{t+h}\to\rho_t$ in $W_2$; hence \cite[Theorem 1.7.7]{panaretos2020invitation}
gives $r_{t+h}\to r_t$ locally uniformly, which, together with
$r_{t+h}\circ\Phi_{t,t+h}\sim\pi$, $r_t\sim\pi$ and
$\pi\in\mathcal P_2(\mathbb R^d)$, implies
\begin{equation}
\label{eq:second factor convergence}
\|r_{t+h}\circ\Phi_{t,t+h}-r_t\|_{L^2(\rho_t)}\to0.
\end{equation}

Finally, for the first factor in \eqref{eq:CS}, we note that along the characteristic flow in Assumption \ref{ass:FP-regularity}(ii), we have
\[
    \frac{\dd}{\dd s}
    v_s(\Phi_{t,s}(q))
    =
    a_s(\Phi_{t,s}(q)),
\]
where $a$ is the acceleration defined in \eqref{eq:acceleration}. Consequently,
\begin{align*}
&\frac{\Phi_{t,t+h}(q)-q}{h}
    -
    v_{t+h}(\Phi_{t,t+h}(q))
    =
    -\frac{1}{h}
    \int_t^{t+h}
    (s-t)a_s(\Phi_{t,s}(q))\dd s.
\end{align*}
Using $(\Phi_{t,s})_\#\rho_t=\rho_s$ and Minkowski's inequality, we can thus estimate
\begin{align*}
    \left\|
        \frac{\Phi_{t,t+h}-\operatorname{Id}}{h}
        -
        v_{t+h}\circ\Phi_{t,t+h}
    \right\|_{L^2(\rho_t)}
    &\leq
    \frac{1}{h}
    \int_t^{t+h}
    (s-t)\|a_s\|_{L^2(\rho_s)}\dd s \le 
    \frac{h}{2}
    \sup_{s\in[t,t+h]}
    \|a_s\|_{L^2(\rho_s)}
\le h C_I,
\end{align*}
where the last estimate follows from \eqref{eq:acceleration bound}, with a constant $C_I$ that only depends on the interval $I$.
Together with \eqref{eq:second factor convergence} and the fact that $\tfrac{A_{t+h}}{\alpha_{t+h}}$ is uniformly bounded on $I$, the claim follows from \eqref{eq:CS} by integration and dominated convergence. Indeed, we have that 
\begin{equation}
\left\|
        r_{t+h}\circ\Phi_{t,t+h}-r_t
    \right\|^2_{L^2(\rho_t)} \le 4  \int_{\mathbb{R}^d} |x|^2 \pi(\dd x), 
\end{equation}
because both $(r_{t+h}\circ\Phi_{t,t+h})(x)$ and $r_t(x)$ are distributed according to $\pi$ if $x \sim \rho_t$, so that passing to the limit as $h \rightarrow 0^+$ is justified. 
\end{proof}

Combining Lemmas \ref{lemma:frozen-map-derivative} and \ref{lemma:frozen-map-comparison}, we can now conclude as follows:
\begin{proof}[Proof of Lemmas \ref{lemma:L2-distance_convex} and \ref{L2-distance_sc}]
Because $F(u) = \widetilde{F}_u(u)$, we can write
\begin{equation*}
F(u+h) - F(u) = F(u+h) - \widetilde{F}_u(u+h) + \widetilde{F}_u(u+h) - \widetilde{F}_u(u) \le  (F(u+h) - \widetilde{F}_u(u+h))_+ + \widetilde{F}_u(u+h) - \widetilde{F}_u(u).
\end{equation*}  Integrating against a test function $0 \le \phi \in C_c^\infty((0,\infty))$ and dividing by $h$, we obtain
\begin{subequations}
\begin{align*}
\int_{\mathbb{R}_{>0}} \phi(u) \frac{F(u+h) - F(u)}{h} \dd u \le &  \int_{\mathbb{R}_{>0}} \phi(u) \frac{(F(u+h) - \widetilde{F}_u(u + h))_+}{h} \dd u
\\
& + \int_{\mathbb{R}_{>0}} \phi(u) \frac{\widetilde{F}_u(u+h) - \widetilde{F}_u(u)}{h} \dd u.
\end{align*}
\end{subequations}
Taking $h \rightarrow 0^+$, see that
\begin{equation}
- \int_{\mathbb{R}_{>0}} F(u) \phi'(u) \dd u \le \int_{\mathbb{R}_{>0}} G(u) \phi(u) \dd u,
\end{equation}
using Lemma \ref{lemma:frozen-map-comparison} and Lemma \ref{lemma:frozen-map-derivative} or \ref{lemma:frozen-map-derivative_sc}, respectively, and denoting the right-hand side of \eqref{eq:frozen-map-derivative} or \eqref{eq:frozen-map-derivative_sc}, respectively, by $G$. This inequality means that $\partial_t F\leq G$ in the sense
of distributions. Hence,
$u\mapsto F(u)-\int_{u_0}^u G(v)\dd v$
has a non-increasing representative. Moreover, it is continuous since $F$ is continuous (following, for instance, from \cite[Theorem 1.7.7]{panaretos2020invitation}). As it agrees almost everywhere with a non-increasing representative, it is itself non-increasing, which implies the claim.
\end{proof}

\bibliographystyle{abbrvnat} 
\bibliography{references}

\end{document}